\newcolumntype{C}[1]{>{\centering\hspace{0pt}}p{#1}}
\newcommand{\SO}{\mathrm{SO}}
\newcommand{\Spin}{\mathrm{Spin}}
\newcommand{\U}{\mathrm{U}}
\newcommand{\SU}{\mathrm{SU}}
\newcommand{\Sp}{\mathrm{Sp}}
\newcommand{\Spinor}{\slashed{S}}
\newcommand{\Z}{\mathbb{Z}}
\newcommand{\R}{\mathbb{R}}
\newcommand{\C}{\mathbb{C}}
\newcommand{\CP}{\mathbb{CP}}
\newcommand{\vol}{\mathrm{vol}}
\newtheorem{thm}{Theorem}[section]
\newtheorem{prop}[thm]{Proposition}
\newtheorem{lem}[thm]{Lemma}
\newtheorem{cor}[thm]{Corollary}
\theoremstyle{definition}
\newtheorem{defn}[thm]{Definition}
\newtheorem{example}[thm]{Example}
\newtheorem{rmk}[thm]{Remark}
\newtheorem{setup}[thm]{Setup}
\numberwithin{equation}{section}
\title{Gauge theory on $T^*\CP^2$: explicit $\Sp(2)$-instantons, \\
HYM connections,
and $\Spin(7)$-instantons}
\author{Izar Alonso, Jesse Madnick, Emily Autumn Windes}
\date{August 2025}
\newcommand{\Addresses}
{{  \bigskip
\noindent	(Izar Alonso) \textsc{Rutgers University} \par\nopagebreak
\noindent	\textsc{New Brunswick, NJ, United States}\par\nopagebreak
\noindent	\texttt{izar.alonso@rutgers.edu} \\

\noindent	(Jesse Madnick) \textsc{Seton Hall University} \par\nopagebreak
\noindent	\textsc{South Orange, NJ, United States}\par\nopagebreak
\noindent	\texttt{jesse.ochs.madnick@gmail.com} \\

\noindent	(Emily Autumn Windes) \textsc{New Uzbekistan University} \par\nopagebreak
\noindent	\textsc{Tashkent, Uzbekistan} \par\nopagebreak
\noindent	\texttt{e.windes@newuu.uz} \\
		
}}
\begin{document}

\maketitle

\begin{abstract}
We construct and classify $\SU(3)$-invariant primitive Hermitian Yang-Mills connections and $\Sp(2)$-instantons with gauge groups $S = S^1$ and $S = \SO(3)$ over the Calabi manifold $X = T^*\CP^2$, the unique non-flat, complete, cohomogeneity-one hyperk\"{a}hler $8$-manifold.  Moreover, in the case of $S = S^1$, we also classify the $\SU(3)$-invariant $\Spin(7)$-instantons over $X$ in the following sense.  Letting $\Phi_I$, $\Phi_J$, $\Phi_K$ denote the $\Spin(7)$-structures on $X$ induced from the complex structures $I,J,K$ in the hyperk\"{a}hler triple, we prove that on each invariant $S^1$-bundle $\widetilde{E}_{k} \to X$, $k \in \Z$, the space of invariant $\Spin(7)$-instantons with respect to $\Phi_L$ forms a one-parameter family modulo gauge.  Moreover, every pair of one-parameter families of $\Phi_I$-, $\Phi_J$-, and $\Phi_K$-$\Spin(7)$-instantons intersects only at the unique invariant $\Sp(2)$-instanton on $\widetilde{E}_k$, which is non-flat when $k \neq 0$.
\end{abstract}

 \tableofcontents

\section{Introduction}

\subsection{Background}

\indent \indent The geometric structure of hyperk\"{a}hler $8$-manifolds $(X, g, (I,J,K))$ is remarkably rich.  Essentially by definition, we may view $X$ as a Riemannian manifold via the metric $g$, as a complex manifold with respect to any of the complex structures $I,J,K$, or a symplectic manifold via the K\"{a}hler forms $\omega_I$, $\omega_J$, $\omega_K$.  Moreover, for any of the complex structures $L \in \{I,J,K\}$, we may also define a non-vanishing $(4,0)$-form $\Upsilon_L$, thereby endowing $X$ with three distinct Calabi--Yau structures.  In turn, these Calabi--Yau structures  induce $\Spin(7)$-structures $\Phi_I, \Phi_J, \Phi_K \in \Omega^4(X)$ via the formula $\Phi_L = \frac{1}{2}\omega_L^2 + \mathrm{Re}(\Upsilon_L)$. \\
\indent This means that hyperk\"{a}hler $8$-manifolds are host to a wealth of natural gauge-theoretic PDE.  Indeed, for connections $A$ on bundles $E \to X$, one may ask for $A$ to be, say, a $\Spin(7)$-instanton with respect to a particular $\Phi_L$, or a primitive Hermitian Yang-Mills (pHYM) connection with respect to a given $L$.  Even more stringently, one may ask for $A$ to be an \emph{$\Sp(2)$-instanton} (or \emph{hyperholomorphic connection}), which means that $A$ is pHYM with respect to $I$, $J$, and $K$ simultaneously:
$$A \text{ is an }\Sp(2)\text{-instanton } \ \iff \ A \text{ is pHYM with respect to }I,J,K.$$
In view of the inclusions $\Sp(2) \leq \SU(4) \leq \Spin(7)$, these gauge-theoretic conditions are nested accordingly:
\begin{equation} \label{eq:Implication}
\Sp(2)\text{-instanton} \ \implies \ \text{pHYM connection for }L \ \implies \ \Spin(7)\text{-instanton for }\Phi_L.
\end{equation}
\indent While there is a vast literature on pHYM connections, and a growing body of work on $\Spin(7)$-instantons in recent years, there are relatively few studies of $\Sp(2)$-instantons at present.  In particular, there seem to be few explicit $\Sp(2)$-instantons in the literature. As such, one of the central aims of this work is to construct concrete examples of such objects. \\
\indent Relatedly, it is interesting to ask under what circumstances either of the implications in (\ref{eq:Implication}) might be reversed.  When $X$ is compact, the answer is quite simple.  Indeed, provided that $X$ admits at least one pHYM connection with respect to $L$, Lewis \cite{lewis1999spin} proved that the $\Phi_L$-$\Spin(7)$-instantons coincide with the $L$-pHYM connections.  Similarly, if $X$ admits at least one $\Sp(2)$-instanton, then a result of Verbitsky \cite{verbitsky1993hyperholomorphic} implies that the class of $L$-pHYM connections (for \emph{any} $L$) coincides with that of the $\Sp(2)$-instantons. \\
\indent When $X$ is asymptotically conical (AC) of rate $\nu \leq -\frac{10}{3}$, there exist analogous results.  That is, assuming that an AC $L$-pHYM connection exists, Papoulias \cite{papoulias2022spin} showed that the $\Phi_L$-$\Spin(7)$-instantons with sufficiently fast decay coincide with the AC $L$-pHYM connections.  Moreover, the rate restriction $\nu \leq -\frac{10}{3}$ is sharp.  Similarly, the last two authors \cite{madnick2025sp} showed that if an AC $\Sp(2)$-instanton exists, then the AC $L$-pHYM connections with sufficiently fast decay coincide with the AC $\Sp(2)$-instantons.  It is not known whether the rate restriction $\nu \leq -\frac{10}{3}$ in \cite{madnick2025sp} is sharp or could be improved.

\subsection{Main results}

\indent \indent In this paper, we will construct explicit examples of $\Sp(2)$-instantons, pHYM connections, and $\Spin(7)$-instantons on the 8-manifold $X = T^*\CP^2$ equipped with its Calabi hyperk\"{a}hler structure $(g, (I,J,K))$.  There are two reasons to prioritize this particular space.  First, $X$ has a high degree of symmetry, making it a natural testing ground for examples and conjectures.  Indeed, $X$ is cohomogeneity-one under an $\SU(3)$-action, with the principal orbits equivariantly diffeomorphic to the Aloff--Wallach manifold $N_{1,1} = \SU(3)/\U(1)$, and with the zero section $\CP^2 = \SU(3)/\U(2)$ serving as the unique singular orbit.  In fact, a result of Dancer--Swann \cite{dancer1997hyperkahler} asserts that $T^*\CP^2$ is the unique non-flat, complete cohomogeneity-one hyperk\"{a}hler 8-manifold.  Second, since $X$ is asymptotically conical of rate $\nu = -2 > -\frac{10}{3}$, it is outside the scope of the Papoulias \cite{papoulias2022spin} and Madnick--Windes \cite{madnick2025sp} results. \\
\indent In $\S$\ref{sec:InvarInstS1}, for each $L \in \{I,J,K\}$, we completely classify $\SU(3)$-invariant $\Phi_L$-$\Spin(7)$-instantons over $X$ having gauge group $S = S^1$.  As we now explain, this is essentially a three-step process.  First, we classify the homogeneous $S^1$-bundles $P_n \to N_{1,1}$, finding one for each integer $n \in \Z$, pull those back to $S^1$-bundles called $E_n \to X^*$ over the principal locus $X^* = T^*\CP^2 - \CP^2$, and then prove that $E_n$ extends equivariantly across the zero section to an $S^1$-bundle $\widetilde{E}_k \to X$ if and only if $n = 2k$ is even.  Moreover, such an extension is unique. \\
\indent Second, we classify the $\SU(3)$-invariant $\Phi_L$-$\Spin(7)$-instantons on $E_n \to X^*$ by solving the appropriate ODE system.  For each $L \in \{I, J, K\}$, we find an explicit $3$-parameter family of such connections.  Finally, in the case of $n = 2k$, using the Tao--Tian removable singularity theorem \cite[Theorem 1.1]{tao2004singularity}, we verify that the class of invariant $\Phi_L$-$\Spin(7)$-instantons that extends across the zero section forms a one-parameter family.  In fact, more is true:

\begin{thm} \label{thm:Main1} On each $S^1$-bundle $\widetilde{E}_k \to T^*\CP^2$ for $k \in \Z$, and for each $L \in \{I,J,K\}$, the collection of $\SU(3)$-invariant $\Phi_L$-$\Spin(7)$-instantons on $\widetilde{E}_k$ is a one-parameter family $\{A_{L, t}\}_{t \in \R}$ modulo gauge transformations. \\
\indent Moreover, among the three families $\{A_{I,t}\}$, $\{A_{J,t}\}$, and $\{A_{K,t}\}$, each pair intersects at only at $A_{I,0} = A_{J,0} = A_{K,0}$.  In fact, $A_{L,0}$ is the unique connection in the $A_{L,t}$ family that is primitive Hermitian Yang-Mills with respect to $L$.
\end{thm}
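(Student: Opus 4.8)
The plan is to assemble the three-step classification sketched above and then carry out the intersection analysis. For the first assertion, I would begin from the explicit $3$-parameter family of $\SU(3)$-invariant $\Phi_L$-$\Spin(7)$-instantons on $E_{2k} \to X^*$ obtained by reducing the instanton PDE to a radial ODE system on the orbit interval $(0,\infty)$, the invariant connection being encoded by finitely many radial functions. To descend to $\widetilde{E}_k \to X$ I would impose the regularity conditions at $r \to 0$ forced by smooth equivariant extension across the singular orbit $\CP^2 = \SU(3)/\U(2)$. Because $\CP^2$ has codimension $4$ and the candidate connections have bounded curvature near the zero section, the Tao--Tian removable singularity theorem \cite[Theorem 1.1]{tao2004singularity} guarantees that any solution meeting these boundary conditions extends to a genuine smooth instanton on $\widetilde{E}_k$. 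The extension conditions cut the $3$-parameter family down, and quotienting by the residual $\SU(3)$-invariant gauge transformations leaves precisely the one-parameter family $\{A_{L,t}\}_{t\in\R}$.

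For the \emph{moreover} statement I would first locate the common point. The invariant $\Sp(2)$-instanton on $\widetilde{E}_k$ is, by definition, pHYM with respect to $I$, $J$, and $K$; by the implications in (\ref{eq:Implication}) it is therefore a $\Phi_L$-$\Spin(7)$-instanton for every $L$ and hence lies in all three families. I would identify this connection with $A_{L,0}$ and show it is the unique pHYM member of the family $\{A_{L,t}\}$: within the family, the pHYM-with-respect-to-$L$ condition is one scalar constraint beyond the $\Spin(7)$-instanton ODE system---it forces the curvature component lying in $\Lambda^2_{21,L} \ominus \Lambda^{1,1}_{0,L}$ to vanish---so it selects a single value of $t$, normalized to $t=0$. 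Since the $\Sp(2)$-instanton condition does not depend on $L$, the same connection appears as $A_{L,0}$ for all three $L$, giving $A_{I,0} = A_{J,0} = A_{K,0}$.

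To rule out any other intersection, suppose $A_{I,t}$ is gauge-equivalent to $A_{J,s}$, so that this connection is simultaneously a $\Phi_I$- and a $\Phi_J$-$\Spin(7)$-instanton. Using $\Phi_L = \tfrac12 \omega_L^2 + \mathrm{Re}(\Upsilon_L)$, I would write the two instanton ODE systems explicitly in the invariant framing; they share a common Hermitian backbone coming from the $\omega_L^2$ term and differ only in the $\mathrm{Re}(\Upsilon_L)$ contributions, which rotate within the hyperk\"ahler triple as $L$ varies. Subtracting the two systems eliminates the backbone and produces algebraic relations among the radial functions that I expect to force the non-pHYM curvature components for both $I$ and $J$ to vanish. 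A curvature that is type $(1,1)$ with respect to both $I$ and $J$ is automatically type $(1,1)$ with respect to $K$ and has no $\omega_L$-component, hence is primitive for all three; thus the common connection is an $\Sp(2)$-instanton, and by the previous paragraph equals $A_{I,0} = A_{J,0} = A_{K,0}$. This forces $t = s = 0$.

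I expect the main obstacle to be the subtraction step in the third paragraph: confirming that the difference of the $\mathrm{Re}(\Upsilon_I)$ and $\mathrm{Re}(\Upsilon_J)$ terms yields \emph{nondegenerate} algebraic relations, so that the common locus collapses exactly to the $\Sp(2)$ point rather than to a larger positive-dimensional family. This requires writing the holomorphic-volume-form contributions explicitly in the invariant coframe adapted to $N_{1,1} = \SU(3)/\U(1)$ and checking that the relevant determinants are nonzero. A secondary point to verify carefully is that the extending sub-family in the first paragraph genuinely has finite energy near $\CP^2$, so that Tao--Tian applies and the parameter count collapses to a single one-parameter family rather than a discrete union of such families.
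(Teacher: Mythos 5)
Your first paragraph and the identification of the common point with the $\Sp(2)$-instanton follow the paper's route: reduce to a radial ODE system, obtain the explicit $3$-parameter families on $E_{2k}\to X^*$, and cut down to one parameter by demanding bounded curvature near the zero section so that the Tao--Tian removable singularity theorem applies (the paper checks pointwise boundedness of $|F_A|^2$ directly, which also settles your worry about finite energy; note the zero section sits at $r=1$, not $r\to 0$, in the paper's coordinate). That part is sound.

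The genuine gap is in your third paragraph. You expect that subtracting the $\Phi_I$- and $\Phi_J$-instanton systems forces the non-pHYM curvature components \emph{for both} $I$ and $J$ to vanish, after which the hyperk\"ahler linear algebra would give the $\Sp(2)$ condition. This is false as a structural claim: since $\Phi_I+\Phi_J=\omega_J^2$, being simultaneously an $I$- and $J$-$\Spin(7)$-instanton is equivalent to being pHYM with respect to $J$ \emph{only} (this is exactly Proposition \ref{prop:SU4Spin7A} of the paper), and indeed on the principal locus $X^*$ the intersection of the two $3$-parameter $\Spin(7)$-families is a genuine one-parameter family of $J$-pHYM connections (the $C_2$-family of Theorem \ref{thm:SU4Class}(b)), not a single point. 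The collapse to the $\Sp(2)$-instanton happens only after imposing extendibility across the zero section. So your argument as written would not close; you need either to read the pairwise intersections off the explicit formulas of the extended one-parameter families (the paper's Corollary \ref{thm:MainS1}: the three families vary in the mutually transverse $\nu_1$, $\kappa$, $\nu_2$ directions, so any two meet only at the common parameter value $0$), or to combine Proposition \ref{prop:SU4Spin7A} with the separate fact that the unique extendible invariant $L$-pHYM connection is the $\Sp(2)$-instanton. The nondegeneracy you flag as the ``main obstacle'' is therefore not just a computation to be checked: the determinant you hope is nonzero genuinely vanishes on $X^*$, and only the boundary condition at the singular orbit rescues the statement.
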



\begin{cor} On each $S^1$-bundle $\widetilde{E}_k \to T^*\CP^2$ for $k \in \Z$, there exists a unique $\SU(3)$-invariant $\Sp(2)$-instanton (namely, $A_{I,0} = A_{J,0} = A_{K,0}$).  For $k \neq 0$, this $\Sp(2)$-instanton is not flat.
\end{cor}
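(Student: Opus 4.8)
The plan is to extract both claims of the corollary almost immediately from Theorem~\ref{thm:Main1}, the implication chain~\eqref{eq:Implication}, and the definition of an $\Sp(2)$-instanton as a connection that is simultaneously pHYM with respect to $I$, $J$, and $K$. The only genuinely new input beyond Theorem~\ref{thm:Main1} is a topological non-triviality statement for the bundles $\widetilde{E}_k$ with $k \neq 0$, which will deliver non-flatness.

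For existence and uniqueness, I would begin with an arbitrary $\SU(3)$-invariant $\Sp(2)$-instanton $A$ on $\widetilde{E}_k$. By definition $A$ is pHYM with respect to each $L \in \{I,J,K\}$, so by~\eqref{eq:Implication} it is a $\Phi_L$-$\Spin(7)$-instanton for every $L$. Since Theorem~\ref{thm:Main1} classifies the invariant $\Phi_L$-$\Spin(7)$-instantons as precisely the family $\{A_{L,t}\}$, the gauge class of $A$ lies simultaneously on all three curves $\{A_{I,t}\}$, $\{A_{J,t}\}$, $\{A_{K,t}\}$ in the moduli space; by the second part of the theorem any two of these curves meet only at $A_{I,0}=A_{J,0}=A_{K,0}$, forcing $A = A_{I,0}$ up to gauge. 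Conversely, the last sentence of Theorem~\ref{thm:Main1} asserts that $A_{I,0}$, $A_{J,0}$, $A_{K,0}$ are pHYM with respect to $I$, $J$, $K$ respectively; as these are one and the same connection, it is pHYM with respect to all three complex structures at once, hence is an $\Sp(2)$-instanton. This settles existence and uniqueness.

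For non-flatness when $k \neq 0$, I would argue topologically. Since $X = T^*\CP^2$ deformation retracts onto its zero section $\CP^2$, it is simply connected, so flat principal $S^1$-bundles over $X$ are classified by $\Hom(\pi_1(X), S^1) = 0$; consequently any $S^1$-bundle admitting a flat connection must be topologically trivial. It therefore suffices to know that $\widetilde{E}_k$ is nontrivial for $k \neq 0$, which follows from the classification of invariant $S^1$-bundles established at the start of $\S$\ref{sec:InvarInstS1}: under that classification $\widetilde{E}_k$ carries first Chern class a nonzero multiple of $k$ in $H^2(X;\Z) \cong H^2(\CP^2;\Z) \cong \Z$. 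Hence for $k \neq 0$ the bundle admits no flat connection whatsoever, and in particular the $\Sp(2)$-instanton $A_{I,0}$ is not flat.

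I expect the whole corollary to be essentially bookkeeping once Theorem~\ref{thm:Main1} is in hand, with the only point requiring genuine care being the last identification of $c_1(\widetilde{E}_k)$ as nonzero for $k \neq 0$. If one preferred to avoid invoking the bundle topology, an equivalent route is to read non-flatness directly off the explicit curvature of $A_{I,0}$ obtained when solving the governing ODE system, and to confirm by a Chern--Weil computation that $[\tfrac{i}{2\pi}F_{A_{I,0}}] \neq 0$; either approach closes the argument.
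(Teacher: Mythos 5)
Your existence-and-uniqueness argument is correct and is essentially the paper's own: the corollary is deduced from Theorem~\ref{thm:Main1} exactly as you describe, using the chain~(\ref{eq:Implication}) in one direction and the fact that $A_{I,0}=A_{J,0}=A_{K,0}$ is pHYM for each $L$ in the other (this is Corollary~\ref{cor:Sp2Spin7relation} in action). Where you genuinely diverge is on non-flatness. The paper does not argue topologically: in Theorem~\ref{cor:Sp2} it simply writes down the curvature of the unique invariant $\Sp(2)$-instanton explicitly from~(\ref{eq:FAS1MC}),
$$F_{A}= n\left[\frac{2}{r^3}\, dr \wedge \kappa - \left( 1+\frac{1}{r^2} \right) \mu_1 \wedge \mu_2 -  \left( 1-\frac{1}{r^2} \right) \sigma_1 \wedge \sigma_2 + \frac{2}{r^2}\, \nu_1 \wedge \nu_2 \right] \otimes T,$$
which is visibly nonzero for $n=2k\neq 0$. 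Your route via $\pi_1(X)=0$ and $c_1(\widetilde{E}_k)\neq 0$ is also valid and in fact proves the stronger statement that $\widetilde{E}_k$ carries no flat connection at all for $k\neq 0$; but be aware that the identification $c_1(\widetilde{E}_k)=\pm k$ in $H^2(T^*\CP^2;\Z)\cong\Z$ is nowhere established in the paper (the classification in $\S$\ref{sub:InvS1Bundle} is phrased purely in terms of the homomorphisms $\chi_k$), so you would need to supply the standard computation that $Q_k=\SU(3)\times_{\chi_k}S^1$ is the circle bundle of $\mathcal{O}(\pm k)$ over $\CP^2$. Your suggested fallback of reading non-flatness off the explicit curvature is precisely what the paper does, and it is the cheaper route given that the curvature has already been computed in the course of solving the instanton ODEs.
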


\indent In $\S$\ref{sec:InvarInstSO3}, we study $\SU(3)$-invariant instantons over $X$ with gauge group $S = \SO(3)$.  As in the $S^1$ case, we classify the homogeneous $\SO(3)$-bundles over $N_{1,1}$, finding one for each integer $n \in \Z$, and thereby obtain $\SO(3)$-bundles $E_n \to X^*$ over the principal locus.  We show that $E_n$ extends equivariantly across the zero section to an $\SO(3)$-bundle $\widetilde{E}_k \to X$ if and only if $n = 2k$ is even.  When $k \neq 0$, the extension is unique, but the trivial bundle $E_0 \to X^*$ admits  two distinct extensions, which we label $\widetilde{E}_0$ and $\widetilde{E}_\wedge$. \\
\indent We then consider separately the cases of $k \neq 0$ and $k = 0$.  For $k \neq 0$, the $\SU(3)$-invariant connections on $E_k \to X^*$ essentially reduce to those on $S^1$-bundles.  Accordingly, the analogue of Theorem \ref{thm:Main1} holds in this setting.  On the other hand, for $k = 0$, the $\Phi_L$-$\Spin(7)$-instanton equations are considerably more formidable.  As such, we restrict attention to the subclass of $L$-pHYM connections on $E_0 \to X^*$, obtaining a $3$-parameter family modulo gauge transformations.  However, in that family, the only element that extends across the zero section is the canonical connection, which is flat.  To summarize:

\begin{thm} ${}$
\begin{enumerate}[(a)]
\item On each $\SO(3)$-bundle $\widetilde{E}_k \to T^*\CP^2$ for $k \neq 0$, and for each $L \in \{I,J,K\}$, the collection of $\SU(3)$-invariant $\Phi_L$-$\Spin(7)$-instantons on $\widetilde{E}_k$ is a one-parameter family $\{A_{L, t}\}_{t \in \R}$ modulo gauge transformations.  Moreover, $A_{I,0} = A_{J,0} = A_{K,0}$ is the unique connection in the $A_{L,t}$ family that is primitive Hermitian Yang-Mills with respect to $L$, and it is an $\Sp(2)$-instanton.
\item On the $\SO(3)$-bundles $\widetilde{E}_0 \to T^*\CP^2$ and $\widetilde{E}_\wedge \to T^*\CP^2$, for each $L \in \{I,J,K\}$, the only $\SU(3)$-invariant connection that is primitive Hermitian Yang-Mills with respect to $L$ is the (flat) canonical connection.
\end{enumerate}
\end{thm}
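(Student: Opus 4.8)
The plan is to treat both parts by the cohomogeneity-one reduction already used in the $S^1$ case. Fixing a unit-speed geodesic transverse to the principal orbits identifies the principal locus as $X^* \cong (0,\infty) \times N_{1,1}$ with radial coordinate $r$, and $N_{1,1} = \SU(3)/\U(1)$ is reductive with isotropy complement $\mathfrak{m}$. By Wang's theorem, an $\SU(3)$-invariant connection on the homogeneous bundle $E_n = \SU(3)\times_{\U(1)}\SO(3) \to N_{1,1}$ is determined by the isotropy homomorphism $\lambda_n \colon \U(1) \to \SO(3)$ together with a $\U(1)$-equivariant linear map $\mathfrak{m} \to \mathfrak{so}(3)$; letting these data depend on $r$ gives the general invariant connection on $E_n \to X^*$. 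Inserting this Ansatz into the $\Phi_L$-$\Spin(7)$-instanton (resp. $L$-pHYM) equations turns them into a first-order ODE system in $r$, so the theorem reduces to (i) solving this system over $X^*$, and (ii) deciding which solutions extend smoothly across the singular orbit $\CP^2$.

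For part (a), the decisive point is that for $k \neq 0$ the homomorphism $\lambda_{2k}$ is nontrivial, with image a maximal torus $T \cong \SO(2) \leq \SO(3)$. Since the centralizer of $T$ in $\SO(3)$ is $T$ itself, $\U(1)$-equivariance (through $\Ad\circ\lambda_{2k}$) kills the components of the connection transverse to $\mathfrak{t}$, so every invariant connection on $E_{2k} \to X^*$ is valued in the fixed torus $T$. The resulting ODE system is therefore isomorphic to the abelian system of \S\ref{sec:InvarInstS1}, and the analysis there applies verbatim: modulo (invariant) gauge we obtain a one-parameter family $\{A_{L,t}\}$, the coincidence $A_{I,0}=A_{J,0}=A_{K,0}$, and the identification of $A_{L,0}$ as its unique $L$-pHYM member. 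Since this connection is then pHYM with respect to $I,J,K$ simultaneously, it is by definition an $\Sp(2)$-instanton. The only extra ingredient is that this abelian reduction is compatible with the unique extension $\widetilde{E}_k$, which is immediate from the classification of homogeneous $\SO(3)$-bundles established earlier.

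For part (b) we have $k=0$, so $\lambda_0$ is trivial and the full nonabelian $\mathfrak{so}(3)$ survives; the invariant connections now carry genuinely more freedom and the $\Spin(7)$-instanton system becomes intractable. I would instead work directly with the smaller $L$-pHYM subsystem. Solving it on $X^* \cong (0,\infty)\times N_{1,1}$ — using the explicit structure functions of the Calabi metric and whatever first integrals the coupled ODEs admit — should produce a three-parameter family of invariant $L$-pHYM connections modulo gauge. The final step is the extension analysis at the zero section: imposing smoothness across $\CP^2$ constrains the radial profiles of the solutions, and I expect these constraints to force every nonabelian component to vanish in the limit, leaving only the canonical connection. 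Its flatness is then automatic, since over $X^*$ the canonical connection associated to the trivial $\lambda_0$ is the product connection, and curvature vanishing on the dense set $X^*$ persists on all of $X$ (consistently with $X$ being simply connected, so that the extended bundle is trivial).

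The main obstacle is the extension analysis of part (b). First, one must translate smoothness at the singular orbit into sharp boundary conditions on the ODE solutions, and this requires understanding precisely how the homogeneous bundle $E_0$ glues onto each of the two inequivalent equivariant extensions $\widetilde{E}_0$ and $\widetilde{E}_\wedge$; the two cases correspond to the two ways of prolonging the $\SO(3)$-isotropy data over $\CP^2 = \SU(3)/\U(2)$, and they impose different limiting conditions. Second, one must check that these conditions eliminate \emph{all} of the nonabelian directions in the three-parameter family — a matter finer than a dimension count — for which the Tao--Tian removable singularity theorem \cite{tao2004singularity} is the natural tool to exclude solutions that extend only continuously rather than smoothly. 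By comparison, the $X^*$-level ODE analysis in both parts, though calculation-heavy, is routine once the Wang-type reduction is set up.
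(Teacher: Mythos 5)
Your overall strategy coincides with the paper's: Wang's theorem reduces everything to an ODE system in $r$; for $k\neq 0$ the invariant connections on $E_{2k}$ are forced into a fixed torus $\mathfrak{t}\subset\mathfrak{so}(3)$, so the $S^1$ analysis of \S\ref{sec:InvarInstS1} applies verbatim; for $k=0$ one classifies the invariant $L$-pHYM connections directly (a three-parameter family) and then uses the Tao--Tian removable-singularity criterion to show that only the canonical connection extends. Two points in your write-up deserve comment.

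First, in part (a) your justification of the abelian reduction is incomplete as stated. The centralizer argument only controls the image of the weight-zero summands $(X_4)\oplus(X_1)\oplus(X_5)$ of $\mathfrak{m}$; the summands $(X_2,X_6)$ and $(X_3,X_7)$ carry $\U(1)$-weight $3$, and Schur's lemma kills their contribution only because $\C_3\not\cong\C_{2k}$ as real $\U(1)$-modules, i.e.\ because $2k$ is even and hence never $\pm 3$. This is exactly why Proposition \ref{prop:SchurSO3} must single out $n=3$ as an exceptional case in which extra components valued in $T_2,T_3$ survive. Your conclusion is correct for $n=2k$, but the centralizer alone does not rule out such extra components.

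Second, part (b) is a plan rather than a proof: the decisive step --- showing that in the three-parameter family of invariant $L$-pHYM connections on $E_0\to X^*$ every member other than the canonical connection has unbounded curvature at $r=1$ --- is precisely where the content lies, and you leave it as an expectation. In the paper this is an explicit computation: $\left|(F_A,T_j)\right|^2$ contains the factor $C_j^2/(r-1)^4$ in the $I$ case and $48C_j^2/(r^4-1)^4$ in the $J$ and $K$ cases, so boundedness forces $C_1=C_2=C_3=0$. Relatedly, your concern that the two inequivalent extensions $\widetilde{E}_0$ and $\widetilde{E}_\wedge$ impose different limiting conditions dissolves once the Tao--Tian criterion is used in the form of Proposition \ref{prop:ExtendConnection}: the test is simply pointwise boundedness of $F_A$ near the zero section, which does not depend on which equivariant extension one has in mind.
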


\indent This work is organized as follows.  In $\S$\ref{sec:InstHighDim} we introduce $\Sp(2)$-, $\SU(4)$-, and $\Spin(7)$-instantons, and study the relationships between those connections on hyperk\"ahler 8-manifolds. Note that we use the term ``$\SU(4)$-instanton" for ``primitive HYM connection," and abbreviate $\Phi_L$-$\Spin(7)$ as simply $L$-$\Spin(7)$.  In $\S$\ref{sec:CalabiGeometry}, we discuss the geometry of the Calabi manifold $T^*\CP^2$.  Section \ref{sec:InvarBundConn} consists of additional preliminaries on homogeneous bundles and invariant connections.  Finally, in $\S$\ref{sec:InvarInstS1} we classify instantons over $T^*\CP^2$ with gauge group $S = S^1$, and in $\S$\ref{sec:InvarInstSO3} we classify instantons with gauge group $S = \SO(3)$. \\

\noindent \textbf{Acknowledgements:} We gratefully acknowledge helpful conversations with Gavin Ball, Andrew Dancer, Jason Lotay, Gon\c{c}alo Oliveira, Jakob Stein, Henrique S\'{a} Earp, and McKenzie Wang.  We also thank the American Institute of Mathematics.

\section{Instantons in higher dimensions} \label{sec:InstHighDim}

\indent \indent The purpose of this section is to set foundations.  In $\S$\ref{sub:Prelim}, we recall the notions of ``$H$-instanton" and ``$\Omega$-ASD instanton" on suitable Riemannian manifolds of dimension $n \geq 4$.  In particular, we highlight three fundamental classes: the $\SU(m)$-instantons, $\Sp(m)$-instantons, and $\Spin(7)$-instantons.  Then, in $\S$\ref{sub:GaugeHK}, we specialize to the setting of hyperk\"{a}hler 8-manifolds, establishing relationships between $\Sp(2)$-, $\SU(4)$-, and $\Spin(7)$-instantons.

\subsection{Preliminaries} \label{sub:Prelim}

\indent \indent We begin by recalling two standard notions of ``instanton" over manifolds of dimension at least four.  For this, let $(X^n, g)$ be an oriented Riemannian $n$-manifold with $n \geq 4$, let $E \to X$ be a Hermitian vector bundle, and let $\mathrm{ad}_E \to X$ denote the adjoint bundle.  Let $A$ be a connection on $E$, and let $F_A \in \Omega^2(X; \mathrm{ad}_E) = \Gamma(\Lambda^2(T^*X) \otimes \mathrm{ad}_E)$ denote the curvature of $A$.

\begin{defn} Suppose $X$ is equipped with a $(n-4)$-form $\Omega \in \Omega^{n-4}(X)$.  We say that $A$ is an \emph{$\Omega$-anti-self-dual} (or $\Omega$-ASD) \emph{instanton} if its curvature satisfies
\begin{equation*}
\ast F_A = -\Omega \wedge F_A.
\end{equation*}
\end{defn}

We note that $\Omega$-ASD instantons are \emph{$\Omega$-Yang--Mills}, i.e.\ they satisfy the \emph{$\Omega$-Yang--Mills equation}:
$$
d_A (*F_A) = -d\Omega \wedge F_A.
$$
The importance of such connections comes from the fact that they are absolute minimisers of the functional
$$
\text{YM}_\Omega (A) = \int_X |F_A |^2 \text{vol} - \int_X \text{tr}(F_A \wedge F_A ) \wedge \Omega.
$$
In this paper, the forms $\Omega$ that we consider are all closed. In this case, $\Omega$-ASD instantons are also \emph{Yang--Mills connections}, meaning that $d_A(* F_A) = 0$.

\begin{defn} Suppose $X$ is equipped with an $H$-structure, where $H \leq \SO(n)$ is a closed subgroup and $\mathfrak{h} \subset \mathfrak{so}(n)$ denotes its Lie algebra.  (For example, if the holonomy group of $g$ is contained in $H$, then $X$ admits an $H$-structure.)  Splitting $\Lambda^2(\R^n)^* \cong \mathfrak{so}(n) = \mathfrak{h} \oplus \mathfrak{h}^\perp$ with respect to the Killing form yields a decomposition
$$\Lambda^2(T^*X) \cong \Lambda^2_{\mathfrak{h}} \oplus \Lambda^2_{\mathfrak{h}^\perp}.$$
We say $A$ is an \emph{$H$-instanton} if its curvature takes values in $\mathfrak{h}$ in the sense that
\begin{equation*}
F_A \in \Gamma(\Lambda^2_{\mathfrak{h}} \otimes \mathrm{ad}_E).
\end{equation*}
\end{defn}

 We now provide three examples of the above definitions.  All three will play an important role in this work.
 
\begin{example}[$H = \SU(m)$] Let $(X^{2m}, g, J, \omega_J, \Upsilon_J)$ be a Calabi--Yau $2m$-manifold, where $g$ is the Riemannian metric, $J$ is a $g$-orthogonal parallel complex structure, $\omega_J \in \Omega^2(X)$ is the K\"{a}hler form, and $\Upsilon_J \in \Omega^{m,0}(X)$ is the holomorphic volume form.  It is well-known that the following two conditions on a connection $A$ are equivalent:
\begin{enumerate}[(a)]
\item $A$ is an $\SU(m)$-instanton: $F_A \in \Gamma(\Lambda^2_{\mathfrak{su}(m)} \otimes \mathrm{ad}_E)$.
\item $A$ is $\Omega_J$-ASD for $\Omega_J = \frac{1}{(n-2)!}\omega_J^{n-2} \in \Omega^{2n-4}(X)$.
\end{enumerate}
We often refer to such connections as \emph{$J$-$\SU(m)$-instantons}, especially when $X$ is equipped with several different Calabi--Yau structures (e.g., see the subsequent example).  In the literature, the $\C$-linear extensions of such connections are called \emph{primitive Hermitian Yang-Mills (pHYM)}.
\end{example}

\begin{example}[$H = \Sp(m)$] \label{example:Sp} Let $(X^{4m}, g, (I,J,K), (\omega_I, \omega_J, \omega_K))$ be a hyperk\"{a}hler $4m$-manifold.  Here, $g$ is the Riemannian metric, $(I,J,K)$ is a triple of $g$-orthogonal parallel complex structures satisfying $IJ = K$, and $\omega_L \in \Omega^2(X)$ denotes the K\"{a}hler form of $L \in \{I,J,K\}$, meaning that $\omega_L(X,Y) = g(LX,Y)$.  The following conditions are equivalent \cite{madnick2025sp}:
\begin{enumerate}[(a)]
\item $A$ is an $\Sp(m)$-instanton: $F_A \in \Gamma(\Lambda^2_{\mathfrak{sp}(m)} \otimes \mathrm{ad}_E)$.
\item $A$ is $\Theta$-ASD for $\Theta = \frac{1}{(2m-1)!}(\omega_I^2 + \omega_J^2 + \omega_K^2)^{m-1} \in \Omega^{4m-4}(X)$.
\item $A$ is simultaneously $\Omega_I$-, $\Omega_J$-, and $\Omega_K$-ASD, where $\Omega_L := \frac{1}{(2m-2)!}\omega_L^{2m-2} \in \Omega^{4m-4}(X)$.
\item $A$ is simultaneously an $I$-$\SU(2m)$-, $J$-$\SU(2m)$-, and $K$-$\SU(2m)$-instanton.
\end{enumerate}
We remark that in dimension four (i.e., when $m = 1$), the $\Sp(1)$-instanton condition coincides with the classical ASD equation.
\end{example}

 \begin{example}[$H = \Spin(7)$] Let $(X^8, g, \Phi)$ be a $\Spin(7)$-manifold, where $g$ is the Riemannian metric and $\Phi \in \Omega^4(X)$ is the Cayley $4$-form.  It is well-known that the following two conditions on a connection $A$ are equivalent:
\begin{enumerate}[(a)]
\item $A$ is a $\Spin(7)$-instanton: $F_A \in \Gamma(\Lambda^2_{\mathfrak{spin}(7)} \otimes \mathrm{ad}_E)$.
\item $A$ is $\Phi$-ASD.
\end{enumerate}
\end{example}

\subsection{Gauge theory on hyperk\"{a}hler $8$-manifolds} \label{sub:GaugeHK}

\indent \indent This work concerns gauge theory on hyperk\"{a}hler $8$-manifolds.  As we now explain, there are various classes of ``$H$-instantons" that one may consider in such a setting, essentially due to the inclusions $\Sp(2) \leq \SU(4) \leq \Spin(7)$.

\indent Let $(X^8, g, (I,J,K), (\omega_I, \omega_J, \omega_K))$ be a hyperk\"{a}hler $8$-manifold, let $E \to X$ be a Hermitian vector bundle, and let $A$ be a connection on $E$.  By the remarks in Example \ref{example:Sp}, we see that
\begin{equation} \label{eq:Sp2SU4}
A \text{ is an }\Sp(2)\text{-instanton} \ \iff \ A \text{ is an }I\text{-}\SU(4)\text{-}, \,J\text{-}\SU(4)\text{-},\text{ and } K\text{-}\SU(4)\text{-instanton.}
\end{equation}
Now, for each $L \in \{I,J,K\}$, we can view $X$ as a Calabi--Yau $4$-fold via the Calabi--Yau structure $(g, L, \omega_L, \Upsilon_L)$, where the holomorphic volume form $\Upsilon_I \in \Omega^{4,0}(X)$ is given by $\Upsilon_I = \frac{1}{2}(\omega_J + i\omega_K)^2$ (and similarly for cyclic permutations of $\{I,J,K\}$).  In turn, each Calabi--Yau structure induces a $\Spin(7)$-structure $\Phi_L \in \Omega^4(X)$ via the formula
\begin{equation} \label{eq:Spin7Phase1}
\Phi_L = \frac{1}{2}\omega_L^2 + \mathrm{Re}(\Upsilon_L),
\end{equation}
so
\begin{align} \label{eq:CalabiSpin7}
\Phi_I & = \frac{1}{2}(\omega_I^2 + \omega_J^2 - \omega_K^2), & \Phi_J & = \frac{1}{2}(-\omega_I^2 + \omega_J^2 + \omega_K^2), & \Phi_K & = \frac{1}{2}(\omega_I^2 - \omega_J^2 + \omega_K^2).
\end{align}
We shall say that $A$ is an \emph{$L$-$\Spin(7)$-instanton} if $A$ is a $\Spin(7)$-instanton with respect to $\Phi_L$.  It is well-known that (thanks to the inclusion $\mathfrak{su}(4) \subset \mathfrak{spin}(7)$), one has the implication
\begin{equation} \label{eq:SU4Spin7}
A \text{ is an }L\text{-}\SU(4)\text{-instanton } \ \implies A \text{ is an }L\text{-}\Spin(7)\text{-instanton.}
\end{equation}
In summary, every $\Sp(2)$-instanton is an $L$-$\SU(4)$-instanton, and every $L$-$\SU(4)$-instanton is an $L$-$\Spin(7)$-instanton. \\
\indent In the other direction, we may ask what conditions result from requiring that a connection $A$ be, say, simultaneously an $I$-$\SU(4)$- and $J$-$\SU(4)$-instanton, or simultaneously an $I$-$\Spin(7)$- and $J$-$\Spin(7)$-instanton.  The following two propositions address all such pairwise and triple intersections.

\begin{prop} \label{prop:SU4Spin7A} Let $X$ be a hyperk\"{a}hler $8$-manifold, and let $A$ be a connection on a Hermitian vector bundle.   The following are equivalent:
\begin{enumerate}[(a)]
\item $A$ is an $I$-$\SU(4)$-instanton.
\item $A$ is both an $I$-$\Spin(7)$-instanton and a $K$-$\Spin(7)$-instanton.
\end{enumerate}
The analogous result holds for cyclic permutations of $\{I,J,K\}$.
\end{prop}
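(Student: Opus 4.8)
The plan is to reduce the proposition to a pointwise identity satisfied by the curvature $F := F_A$, since each of the three conditions is a pointwise linear-algebraic constraint on $F$ expressed through the $\Omega$-ASD formalism of $\S\ref{sub:Prelim}$. Writing $\Omega_L = \tfrac{1}{2}\omega_L^2$ for the $\SU(4)$ calibration, condition (a) reads $\ast F = -\Omega_I \wedge F$, while the $\Phi_L$-$\Spin(7)$ condition reads $\ast F = -\Phi_L \wedge F$. The first thing I would record is the algebraic relation between the calibrating forms: because $\mathrm{Re}(\Upsilon_I) = \tfrac{1}{2}\mathrm{Re}((\omega_J + i\omega_K)^2) = \Omega_J - \Omega_K$, the expressions in (\ref{eq:CalabiSpin7}) collapse to
\begin{equation*}
\Phi_I = \Omega_I + \mathrm{Re}(\Upsilon_I), \qquad \Phi_K = \Omega_I - \mathrm{Re}(\Upsilon_I).
\end{equation*}
In particular $\Phi_I + \Phi_K = 2\,\Omega_I$ and $\Phi_I - \Phi_K = 2\,\mathrm{Re}(\Upsilon_I)$; this is precisely what makes $\{I,K\}$, rather than $\{I,J\}$, the pair that isolates the $I$-$\SU(4)$ condition.

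With these identities, both implications are short. For (b) $\Rightarrow$ (a), I would add the two $\Spin(7)$-instanton equations to get $2\ast F = -(\Phi_I + \Phi_K) \wedge F = -2\,\Omega_I \wedge F$, which is exactly condition (a). For (a) $\Rightarrow$ (b), the one additional fact needed is that $\mathrm{Re}(\Upsilon_I) \wedge F = 0$, and this holds on type-theoretic grounds: condition (a) forces $F$ to be of type $(1,1)$ with respect to $I$, so since $\Upsilon_I \in \Omega^{4,0}(X)$ on the complex $4$-fold $(X,I)$, we have $\Upsilon_I \wedge F \in \Omega^{5,1}(X) = 0$ and, by conjugation, $\overline{\Upsilon_I} \wedge F \in \Omega^{1,5}(X) = 0$, whence $\mathrm{Re}(\Upsilon_I) \wedge F = 0$. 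Granting this, (a) gives $\Phi_I \wedge F = \Omega_I \wedge F + \mathrm{Re}(\Upsilon_I)\wedge F = \Omega_I \wedge F = -\ast F$ and likewise $\Phi_K \wedge F = \Omega_I \wedge F = -\ast F$, so $F$ is both an $I$- and a $K$-$\Spin(7)$-instanton.

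The only place where any geometry beyond bookkeeping enters is the vanishing $\mathrm{Re}(\Upsilon_I) \wedge F = 0$ for $(1,1)_I$-forms, and I expect this type-degree step to be the main (if mild) obstacle: one must correctly align the complex-linear decomposition of $\Lambda^2 T^*X$ under $I$ with the real wedge identities. As a cross-check that avoids complex types, one can instead verify directly that $F \mapsto \mathrm{Re}(\Upsilon_I) \wedge F$ annihilates the summand $\mathfrak{su}(4)_I = \Lambda^{1,1}_{0,I}$ by computing the eigenvalues of $F \mapsto \ast(\Omega_L \wedge F)$ on the $\SU(4)$-irreducible pieces $\R\omega_I \oplus \mathfrak{su}(4)_I \oplus \LB \Lambda^{2,0}_I \RB$ of $\Lambda^2 T^*X$. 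Finally, the statement for cyclic permutations of $\{I,J,K\}$ follows verbatim upon relabeling, using $\mathrm{Re}(\Upsilon_J) = \Omega_K - \Omega_I$ and $\mathrm{Re}(\Upsilon_K) = \Omega_I - \Omega_J$ together with the permuted identities $\Phi_J = \Omega_J + \mathrm{Re}(\Upsilon_J)$, $\Phi_I = \Omega_J - \mathrm{Re}(\Upsilon_J)$, and so on.
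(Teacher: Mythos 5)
Your proof is correct and follows essentially the same route as the paper's: the key identity $\Phi_I + \Phi_K = \omega_I^2$, with (b) $\Rightarrow$ (a) obtained by adding the two $\Spin(7)$ equations and (a) $\Rightarrow$ (b) by noting that the $\SU(4)$ condition kills the $\mathrm{Re}(\Upsilon_I)$ contribution. The only difference is that where the paper simply cites the standard implication $I$-$\SU(4)$-instanton $\Rightarrow$ $I$-$\Spin(7)$-instanton, you supply its proof via the $(1,1)$-type argument $\Upsilon_I \wedge F \in \Omega^{5,1} = 0$, which is a valid (and slightly more self-contained) way to close that step.
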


\begin{proof} Let $A$ be a connection on a Hermitian vector bundle.  Observing that $\Phi_K + \Phi_I = \omega_I^2$, we have
\begin{equation} \label{eq:PhiKI}
\textstyle F_A \wedge \Phi_K + F_A \wedge \Phi_I = 2(F_A \wedge \frac{1}{2}\omega_I^2).
\end{equation}
\indent (a) $\implies$ (b).  Suppose $A$ is an $I$-$\SU(4)$-instanton.  Then $2(F_A \wedge \frac{1}{2}\omega_I^2) = -2\ast\! F_A$.  Moreover, by  (\ref{eq:SU4Spin7}), $A$ is an $I$-$\Spin(7)$-instanton, so $F_A \wedge \Phi_I = -\ast\! F_A$.  Equation (\ref{eq:PhiKI}) now implies that $A$ is a $K$-$\Spin(7)$-instanton. \\
\indent (b) $\implies$ (a). Suppose $A$ is both an $I$-$\Spin(7)$-instanton and a $K$-$\Spin(7)$-instanton.  Then (\ref{eq:PhiKI}) implies $-2\ast \!F_A = 2(F_A \wedge \frac{1}{2}\omega_I^2)$, from which it follows that $A$ is an $I$-$\SU(4)$-instanton.
\end{proof}

\begin{cor} \label{cor:Sp2Spin7relation} Let $X$ be a hyperk\"{a}hler $8$-manifold, and let $A$ be a connection on a Hermitian vector bundle.   The following are equivalent:
\begin{enumerate}[(a)]
\item $A$ is an $\Sp(2)$-instanton.
\item $A$ is simultaneously a $J$-$\SU(4)$- and $K$-$\SU(4)$-instanton.
\item $A$ is simultaneously an $I$-$\Spin(7)$, $J$-$\Spin(7)$-, and $K$-$\Spin(7)$-instanton.
\end{enumerate}
\end{cor}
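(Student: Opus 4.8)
The plan is to derive the corollary purely formally from Proposition \ref{prop:SU4Spin7A}, its two cyclic permutations, and the characterization \eqref{eq:Sp2SU4} of $\Sp(2)$-instantons; no new curvature computation is needed. First I would record the cyclic versions of Proposition \ref{prop:SU4Spin7A}. Its proof used only the identity $\Phi_K + \Phi_I = \omega_I^2$ read off from \eqref{eq:CalabiSpin7}, and the same computation gives $\Phi_I + \Phi_J = \omega_J^2$ and $\Phi_J + \Phi_K = \omega_K^2$. Hence, in the cyclic order $I \to J \to K \to I$, a connection $A$ is an $L$-$\SU(4)$-instanton if and only if it is simultaneously an $L$-$\Spin(7)$- and a $(\text{predecessor of }L)$-$\Spin(7)$-instanton; concretely, $I$-$\SU(4)$ $\iff$ ($I$- and $K$-$\Spin(7)$), $J$-$\SU(4)$ $\iff$ ($J$- and $I$-$\Spin(7)$), and $K$-$\SU(4)$ $\iff$ ($K$- and $J$-$\Spin(7)$).

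With these three equivalences in hand, I would prove (a) $\iff$ (c) as follows. By \eqref{eq:Sp2SU4}, statement (a) is equivalent to $A$ being an $L$-$\SU(4)$-instanton for every $L \in \{I,J,K\}$. If this holds, then \eqref{eq:SU4Spin7} shows $A$ is an $L$-$\Spin(7)$-instanton for every $L$, which is (c). Conversely, assuming (c), the pair of $I$- and $K$-$\Spin(7)$ conditions yields $I$-$\SU(4)$, the pair of $J$- and $I$-$\Spin(7)$ conditions yields $J$-$\SU(4)$, and the pair of $K$- and $J$-$\Spin(7)$ conditions yields $K$-$\SU(4)$; so all three $\SU(4)$ conditions hold and \eqref{eq:Sp2SU4} returns (a).

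For (b) $\iff$ (c) I would simply intersect the last two cyclic equivalences: the conjunction of $J$-$\SU(4)$ and $K$-$\SU(4)$ is equivalent to the conjunction of $I$-, $J$-, and $K$-$\Spin(7)$ (the factor $J$-$\Spin(7)$ appearing in both), which is exactly (c). Combined with the previous paragraph, this closes the cycle of equivalences.

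The argument presents no analytic difficulty; it is entirely a matter of bookkeeping with the cyclic indices. The one point deserving attention --- and the conceptual content of the corollary --- is why only two of the three $\SU(4)$ conditions are needed in (b): the two adjacent conditions $J$-$\SU(4)$ and $K$-$\SU(4)$ together involve all three $\Spin(7)$ conditions (sharing the common factor $J$-$\Spin(7)$), and this redundancy is precisely what lets them recover the full $\Sp(2)$ condition without invoking $I$-$\SU(4)$ directly.
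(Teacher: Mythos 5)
Your proof is correct and follows essentially the same route as the paper: the paper also derives the corollary purely formally from Proposition \ref{prop:SU4Spin7A} (and its cyclic permutations) together with (\ref{eq:Sp2SU4}), just organized as the cycle (a) $\implies$ (b) $\implies$ (c) $\implies$ (a) rather than your two biconditionals. Your explicit bookkeeping of the cyclic identities $\Phi_I + \Phi_J = \omega_J^2$ and $\Phi_J + \Phi_K = \omega_K^2$ is a correct unpacking of what the paper leaves implicit.
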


\begin{proof} The implication (a) $\implies$ (b) is immediate.  The implication (b) $\implies$ (c) follows from Proposition \ref{prop:SU4Spin7A}.  Finally, (c) $\implies$ (a) also follows from Proposition \ref{prop:SU4Spin7A} and (\ref{eq:Sp2SU4}).
\end{proof}

\begin{rmk}[Phases] Formula (\ref{eq:Spin7Phase1}) implicitly entails a choice of phase.  That is, a given Calabi--Yau structure $(L, \omega_L, \Upsilon_L)$ belongs to an $S^1$-family of Calabi--Yau structures $(L, \omega_L, e^{i\theta}\Upsilon_L)$ for $\theta \in [0,2\pi)$, which in turn yields $\Spin(7)$-structures $\Phi_{(L, e^{i\theta})} := \frac{1}{2}\omega_L^2 + \mathrm{Re}(e^{i\theta}\Upsilon_L)$.  Clearly, $\Phi_L = \Phi_{(L, 1)}$. \\
\indent A separate phenomenon is that distinct Calabi--Yau structures can induce the same $\Spin(7)$-structure.  For example, the formula $\Phi_K = \omega_I^2 - \Phi_I$ used in the proof of Proposition \ref{prop:SU4Spin7A} can be read as saying that $\Phi_{(K,1)} = \Phi_{(I, -1)}$.
\end{rmk}

\section{The Calabi manifold $X = T^*\CP^2$} \label{sec:CalabiGeometry}

\indent \indent This section introduces the Calabi manifold $X = T^*\CP^2$, the central player of this work.  In $\S$\ref{sub:CalabiCohomOne}, we rapidly review the cohomogeneity-one structure of $X$.  To exploit this symmetry, we will make frequent use of the Maurer--Cartan form on $\SU(3)$, which we discuss in $\S$\ref{sub:MCForm}.  Finally, in $\S$\ref{sub:CalabiHKStructure}, we recall the Calabi metric and hyperk\"{a}hler structure on $X$.

\subsection{The cohomogeneity-one structure of $T^*\CP^2$} \label{sub:CalabiCohomOne}

\indent \indent A \textit{cohomogeneity-one} manifold is a connected smooth Riemannian manifold with an action by isometries of a compact Lie group $G$ having an orbit of codimension one.
We consider the canonical projection $X\to X/G$ onto the orbit space $X/G$, which is one-dimensional (with or without boundary) and equip it with the quotient topology relative to the projection.
Assuming that $X$ is simply connected, the inverse images of the interior points of the orbit space $X/G$ are known as \emph{principal orbits}, while the inverse images of the boundary points are called \emph{singular orbits}. \\
\indent For a non-compact cohomogeneity-one manifold with one singular orbit, we have $X/G=[0, +\infty)$ up to rescaling. 
It can be shown that that there are two possible isotropy groups: the isotropy group at each point of the principal orbits, which we call \textit{principal isotropy group} and denote it by $K$, and the isotropy group at each point of the singular orbit, which we call \textit{singular isotropy group} and denote it by $H$. Each principal $G$-orbit is then equivariantly diffeomorphic to $G/K$, and $X_{\mathrm{sing}} \cong G/H$. The collection $G \geq H \geq K$ is called the \textit{group diagram}, and it can be shown that we can recover the cohomogeneity one manifold from it.
We refer the reader for example to \cite{Berand1982, AB15} for a full description of cohomogeneity one manifolds.

Let $X = T^*\CP^2$.   Recall that the Lie group $G = \SU(3)$ acts transitively on $\CP^2$ with stabilizer $H = \U(2)$.  This induces an $\SU(3)$-action on $X$ in the obvious way: a group element $g \in \SU(3)$ acts on a covector $\alpha \in T_p\CP^2$ by $g \cdot \alpha := g^*\alpha \in T^*_{g^{-1}p}\CP^2$. \\
\indent Now, the $\SU(3)$-action on $\CP^2$ is isometric with respect to the Fubini--Study metric, and hence preserves the norms of covectors.  In fact, the $\SU(3)$-action on the unit sphere bundle $\{\alpha \in T^*\CP^2 \colon |\alpha| = 1\}$ is transitive with stabilizer $K \cong \U(1)$, whereas the $\SU(3)$-action on the zero section is transitive with stabilizer $H \cong \U(2)$.  As subgroups of $\SU(3)$, these isotropy groups are
\begin{align}
K & = \U(1) = \left\{ \begin{pmatrix} e^{i\theta} & 0 & 0 \\ 0 & e^{i\theta} & 0 \\ 0 & 0 & e^{-2i\theta} \end{pmatrix} \in \SU(3) \colon \theta \in [0,2\pi) \right\} \label{eq:U1}, \\
H & = \U(2) = \left\{ \begin{pmatrix} A & 0 \\ 0 & \det(\overline{A}) \end{pmatrix} \in \SU(3) \colon A \in \U(2) \right\}\!. \label{eq:U2}
\end{align}
In summary, $G = \SU(3)$ acts with cohomogeneity-one on $X = T^*\CP^2$.  The principal orbits are equivariantly diffeomorphic to the Aloff--Wallach space $N_{1,1} = \SU(3)/\U(1) = G/K$, whereas the unique singular orbit is equivariantly diffeomorphic to $\CP^2 = \SU(3)/\U(2) = G/H$.  In particular, the principal locus $X^* = T^*\CP^2 - \CP^2$ is diffeomorphic to $(0,\infty) \times N_{1,1}$.

\subsection{The Maurer--Cartan form of $\SU(3)$} \label{sub:MCForm}

\indent \indent For computations, we shall choose a basis $\{H, X_1, \ldots, X_7\}$ of $\mathfrak{su}(3)$ that is in some sense adapted to the geometry of the Aloff--Wallach space $N_{1,1} = \SU(3)/\U(1)$.  To begin, let
\begin{equation} \label{eq:HBasis}
H = \begin{bmatrix} i & 0 & 0 \\ 0 & i & 0 \\ 0 & 0 & -2i \end{bmatrix}\!,
\end{equation}
and notice that $\mathfrak{u}(1) \cong \mathrm{span}(H) \subset \mathfrak{su}(3)$ is the Lie subalgebra corresponding to the embedding (\ref{eq:U1}).  Next, for $X,Y \in \mathfrak{su}(3)$, let $\langle X,Y \rangle := -\mathrm{tr}(XY)$ denote the positive-definite inner product given by the Killing form.  We now define, once and for all, a reductive complement $\mathfrak{m} \subset \mathfrak{su}(3)$ by the $\langle \cdot, \cdot \rangle$-orthogonal decomposition
$$\mathfrak{su}(3) = \mathfrak{u}(1) \oplus \mathfrak{m}.$$
Now, define a basis $\{X_1, \ldots, X_7\}$ of $\mathfrak{m}$ as follows: 
\begin{align}  \label{eq:XBasis}
X_4 & = \begin{bmatrix}
i & 0 & 0 \\
0 & -i & 0 \\
0 & 0 & 0 \end{bmatrix}, & X_1 & = \begin{bmatrix}
0 & 1 & 0 \\
-1 & 0 & 0 \\
0 & 0 & 0 \end{bmatrix}, & X_2 & =  \begin{bmatrix}
0 & 0 & 0 \\
0 & 0 & 1 \\
0 & -1 & 0 \end{bmatrix}, & X_3 & = \begin{bmatrix}
0 & 0 & -1 \\
0 & 0 & 0 \\
1 & 0 & 0 \end{bmatrix}, \\
& &  X_5 & = \begin{bmatrix}
0 & i & 0 \\
i & 0 & 0 \\
0 & 0 & 0 \end{bmatrix}, & X_6 & =  \begin{bmatrix}
0 & 0 & 0 \\
0 & 0 & i \\
0 & i & 0 \end{bmatrix}, &
 X_7 & =  \begin{bmatrix}
0 & 0 & i \\
0 & 0 & 0 \\
i & 0 & 0 \end{bmatrix}\!. \notag
\end{align}
One can check that $\{\frac{1}{\sqrt{6}}H, \frac{1}{\sqrt{2}}X_1, \ldots, \frac{1}{\sqrt{2}}X_7\}$ is an $\langle \cdot, \cdot \rangle$-orthonormal basis of $\mathfrak{su}(3)$.  Moreover, the $\U(1)$-action on $\mathfrak{m} = \mathrm{span}\{X_1, \ldots, X_7\}$ decomposes into $\U(1)$-irreducible submodules as
\begin{equation} \label{eq:MDecomp}
\mathfrak{m} = (X_4) \oplus (X_1) \oplus (X_5) \oplus (X_2, X_6) \oplus (X_3, X_7),
\end{equation}
where we are abbreviating $(X_4) = \mathrm{span}(X_4)$, and so on.  \\

\indent Now, let $\eta \in \Omega^1(\SU(3); \mathfrak{su}(3))$ denote the Maurer--Cartan form of $\SU(3)$.  Expressing $\eta$ in terms of the basis $\{H,X_1, \ldots, X_7\}$ defined above, we may write
\begin{align*}
\eta & = \zeta \otimes H + \kappa \otimes X_4 + \nu_1 \otimes X_1 + \nu_2 \otimes X_5 \\
& \ - \sigma_1 \otimes X_3 + \sigma_2 \otimes X_7 + \mu_1 \otimes X_2 + \mu_2 \otimes X_6,
\end{align*}
for some $1$-forms $\zeta, \kappa, \nu_1, \nu_2, \sigma_1, \sigma_2, \mu_1, \mu_2 \in \Omega^1(\SU(3))$.  Defining complex $1$-forms $\nu := \nu_1 + i\nu_2$ and $\sigma := \sigma_1 + i\sigma_2$ and $\mu := \mu_1 + i\mu_2$, we may write $\eta$ more succinctly as
$$\eta = \begin{bmatrix}
i(\zeta + \kappa) & \nu & \sigma \\
-\overline{\nu} & i(\zeta - \kappa) & \mu \\
-\overline{\sigma} & -\overline{\mu} & -2i\zeta \end{bmatrix}\!.$$
Expanding the Maurer--Cartan equation $d\eta_{AB} = -\eta_{AC} \wedge \eta_{CB}$ yields the following \emph{structure equations}:
\begin{align*}
d\sigma_1 & = \kappa \wedge \sigma_2 + \mu_1 \wedge  \nu_1 - \mu_2 \wedge \nu_2 -  3  \sigma_2 \wedge \zeta,  & d\nu_1 & = -\mu_1 \wedge \sigma_1 - \mu_2 \wedge \sigma_2 - 2 \nu_2 \wedge \kappa, \\
d\sigma_2 & =  -\kappa \wedge \sigma_1 + \mu_1 \wedge \nu_2 + \mu_2 \wedge \nu_1  + 3 \sigma_1 \wedge \zeta, & d\nu_2 & = \mu_2 \wedge \sigma_1 - \mu_1 \wedge \sigma_2  + 2 \nu_1 \wedge \kappa, \\
d\mu_1 & =  \mu_2 \wedge \kappa - \sigma_1 \wedge \nu_1 - \sigma_2 \wedge \nu_2 - 3\mu_2 \wedge \zeta, & d\kappa & =  \mu_1 \wedge \mu_2 - \sigma_1 \wedge \sigma_2 -2 \nu_1 \wedge \nu_2, \\
d\mu_2 & = - \mu_1 \wedge \kappa - \sigma_2 \wedge \nu_1 + \sigma_1 \wedge \nu_2  + 3 \mu_1 \wedge \zeta,  & d\zeta & = -\mu_1 \wedge \mu_2 - \sigma_1 \wedge \sigma_2.
\end{align*}

\subsection{The Calabi hyperk\"{a}hler structure on $T^*\CP^2$} \label{sub:CalabiHKStructure}

\indent \indent We now equip $X = T^*\CP^2$ with its Calabi hyperk\"{a}hler structure.  Our discussion here is drawn from \cite{cvetivc2001hyper}, though our notation and scale factor conventions are somewhat different. \\
\indent Let us regard the principal locus $X^* = T^*\CP^2 - \CP^2$ as $(0,\infty) \times N_{1,1}$, and use the coordinate $t$ on the $(0,\infty)$ factor.  Every $\SU(3)$-invariant Riemannian metric $g$ on $X^*$ can be expressed (after pulling back to $(0,\infty) \times \SU(3)$) as
\begin{equation} \label{eq:Metric1}
g = dt^2 + a^2\left( \sigma_1^2 + \sigma_2^2 \right) + b^2 \left( \mu_1^2 + \mu_2^2 \right) + c_1^2 \nu_1^2 + c_2^2\nu_2^2 + f^2 \kappa^2,
\end{equation}
for some functions $a(t), b(t), c_1(t), c_2(t), f(t)$.  Now, define a new coordinate $r \in (1,\infty)$ by requiring
\begin{equation} \label{eq:dtdr}
dt = \sqrt{ \frac{r^4}{r^4 - 1} }\,dr.
\end{equation}
The \emph{Calabi metric} is the $\SU(3)$-invariant metric (\ref{eq:Metric1}) with coefficient functions
\begin{align} \label{eq:Metric2}
a & =  \sqrt{\frac{1}{2}(r^2 - 1)}, & b & =  \sqrt{\frac{1}{2}(r^2+1)}, & c := c_1 = c_2 & = r, & f & =  \frac{1}{r}\sqrt{r^4 - 1}.
\end{align}
It is well-known that the Calabi metric extends across the zero section, and is a complete Riemannian metric on all of $T^*\CP^2$ \cite{dancer1997hyperkahler}.  By construction, the set of covectors $\{e^0, e^1, \ldots, e^7\}$ defined by
\begin{align} \label{eq:Coframe}
e^0 & = dt, & e^1 & = a\sigma_1, & e^3 & = b\mu_1, & e^5 & = c\nu_1, & e^7 & = f\kappa, \\
 &   & e^2 & = a\sigma_2, & e^4 & = b\mu_2, & e^6 & = c\nu_2, &  \notag
\end{align}
is $g$-orthonormal.  Using these, we define the \emph{hyperk\"{a}hler $2$-forms}
\begin{align}
\omega_I & = (-e^0 \wedge e^7 + e^5 \wedge e^6) + (e^1 \wedge e^2 - e^3 \wedge e^4), \notag \\
\omega_J & = (-e^0 \wedge e^6 - e^5 \wedge e^7) + (-e^2 \wedge e^3 + e^1 \wedge e^4), \label{eq:CalabiHK} \\
\omega_K & = (e^0 \wedge e^5 - e^6 \wedge e^7) + (e^2 \wedge e^4 + e^1 \wedge e^3). \notag
\end{align}
One can check that $\omega_I, \omega_J, \omega_K$ are non-degenerate, closed $2$-forms that endow $X$ with a hyperk\"{a}hler structure.  Note that the orientation on $X$ is given by $\vol_X = e^0 \wedge e^1 \wedge e^2 \wedge e^3 \wedge e^4 \wedge e^5 \wedge e^6 \wedge e^7$. \\
\indent Finally, we comment on the asymptotic behavior of the Calabi metric.  Note that we may rewrite $g$ as follows:
$$g = \frac{r^4}{r^4-1}\,dr^2 + r^2 \left( \frac{r^2 - 1}{2r^2}\left( \sigma_1^2 + \sigma_2^2 \right) + \frac{r^2+1}{2r^2} \left( \mu_1^2 + \mu_2^2 \right) + (\nu_1^2 + \nu_2^2) + \frac{r^4 - 1}{r^4}\,\kappa^2\right)\!.$$
As $r \to \infty$, the Calabi metric $g$ appears to approach the cone metric on $\mathrm{C}(N_{1,1}) = (0,\infty) \times N_{1,1}$ defined by
$$g_C := dr^2 + r^2 \left( \frac{1}{2}\!\left( \sigma_1^2 + \sigma_2^2 \right) + \frac{1}{2}\! \left( \mu_1^2 + \mu_2^2 \right) + (\nu_1^2 + \nu_2^2) + \kappa^2\right)\!.$$
To be precise:

\begin{rmk}
The Calabi metric $g$ is an asymptotically conical Riemannian metric with rate $\nu = -2$ and asymptotic link $(N_{1,1}, g_N)$.
\end{rmk}

\indent By way of comparison, we remark that the Stenzel metric on $T^*S^4$ has rate $\nu = -\frac{14}{3}$, and the Bryant-Salamon metric on $\Spinor(S^4)$ has rate $\nu = -\frac{10}{3}$.  See \cite{papoulias2022spin}, \cite{ghosh2024deformation}.

\section{Invariant bundles and connections} \label{sec:InvarBundConn}

\indent \indent This section reviews the basics of homogeneous bundles and invariant connections.  In $\S$\ref{sub:HomBundle}, we define homogeneous bundles $P \to G/L$ over homogeneous spaces and explain how to construct them.  Then, in $\S$\ref{sub:BundleExtension}, we consider bundles $E \to X^*$ over the principal locus $X^*$ of an arbitrary cohomogeneity one manifold $X = X^* \cup X_{\mathrm{sing}}$ with unique singular orbit $X_{\mathrm{sing}}$ that restrict to homogeneous bundles on the principal orbits.  The main result (Proposition \ref{prop:BundleExtension}) is an easily checked necessary and sufficient condition for such bundles $E \to X^*$ to extend across the singular orbit.  Although this extension result is well-known to experts, we were unable to locate a proof in the literature.  Section \ref{sub:InvariantP} recalls Wang's theorem on invariant connections on homogeneous bundles $P \to G/L$, and $\S$\ref{sub:InvariantE} discusses invariant connections on bundles $E \to X^*$ over principal loci.

\subsection{Homogeneous principal bundles} \label{sub:HomBundle}

\indent \indent We begin by recalling the concept of a ``homogeneous $S$-bundle". Let $G/L$ be a homogeneous space, where $G$ is a compact connected Lie group and $L \leq G$ is a closed subgroup.  In actual practice, we will take $G = \SU(3)$ and $L = \U(1)$ or $\U(2)$, where the embeddings are given by (\ref{eq:U1})-(\ref{eq:U2}), so that $G/L = \SU(3)/\U(1) = N_{1,1}$ or $G/L = \SU(3)/\U(2) = \CP^2$.  Let $S$ be a compact Lie group, which will serve as our desired structure group.

\begin{defn} A \emph{homogeneous $S$-bundle} over $G/L$ is a principal $S$-bundle $\pi \colon P \to G/L$ whose total space admits a left $G$-action that lifts the $G$-action on $G/K$.  Two homogeneous $S$-bundles $P_1 \to G/L$ and $P_2 \to G/L$ are \emph{isomorphic} if there exists a principal bundle isomorphism $P_1 \to P_2$ that is $G$-equivariant.
\end{defn}

\indent Note that if $P \to G/L$ is a homogeneous $S$-bundle, then $P$ admits both a left $G$-action as well as a right $S$-action. Together, the $(G \times S)$-action on $P$ given by $(g,s) \cdot p = gps^{-1}$ is transitive, and the stabilizer is isomorphic to $L$.   In fact, fixing a base point $p \in P$, there exists a unique Lie group homomorphism $\psi \colon L \to S$ such that $p \psi(\ell) = \ell p$, and the stabilizer at $p$ is given by $\mathrm{Stab}_{G \times S}(p) = (\mathrm{Id} \times \psi)(L) = \{ (\ell, \psi(\ell)) \colon \ell \in L\} \leq G \times S$.  Consequently, the smooth manifold $P$ is itself a homogeneous space $P \cong (G \times S)/L$.

\indent It turns out that this process can be reversed.  Indeed, the following standard result classifies homogeneous $S$-bundles over $G/L$ in terms of in terms of Lie group homomorphisms $\psi \colon L \to S$.  Say that two homomorphisms $\psi_1, \psi_2 \colon L \to S$ are \emph{(element) conjugate} if there exists $s \in S$ such that $\psi_2 = s \psi_1 s^{-1}$.

\begin{lem}[Construction lemma] \label{lem:Construction} For each Lie group homomorphism $\psi \colon L \to S$, let
$$P_\psi := G \times_\psi S = \frac{G \times S}{\sim}, \ \text{ where } (g,s) \sim (g \ell, s \psi(\ell)) \ \text{ for } (g,s) \in G \times S \text{ and } \ell \in L.$$
Then $\pi_\psi \colon P_\psi \to G/L$ via $\pi_\psi([g,s]_\psi) = gL$ is a homogeneous $S$-bundle.  Moreover, every homogeneous $S$-bundle over $G/L$ arises in this way.  Finally, $P_{\psi_1}$ is isomorphic to $P_{\psi_2}$ as homogeneous $S$-bundles if and only if $\psi_1$ and $\psi_2$ are element-conjugate.
\end{lem}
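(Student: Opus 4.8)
The plan is to verify the three assertions in turn, leaning throughout on the orbit–stabilizer discussion immediately preceding the lemma.

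First, to see that $P_\psi$ is a homogeneous $S$-bundle, I would observe that the relation $\sim$ is exactly the orbit relation of the closed subgroup $(\mathrm{Id}\times\psi)(L) = \{(\ell,\psi(\ell)) : \ell \in L\} \leq G\times S$ acting on $G\times S$ by right multiplication, since $(g,s)\cdot(\ell,\psi(\ell)) = (g\ell, s\psi(\ell))$. Because $G$ is compact, $L$ and hence $(\mathrm{Id}\times\psi)(L)$ is compact, so this action is free and proper; the quotient manifold theorem then makes $P_\psi = (G\times S)/(\mathrm{Id}\times\psi)(L)$ a smooth manifold with the quotient map a submersion. The left $G$-action $g_0\cdot[g,s]_\psi := [g_0 g, s]_\psi$ and the structure $S$-action descend, because left multiplication on the $G$-factor and the $S$-action on the $S$-factor both commute with the right $(\mathrm{Id}\times\psi)(L)$-action. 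One then checks directly that $\pi_\psi$ is well defined and $G$-equivariant, that the $S$-action is free and fibrewise transitive, and that local triviality follows from a local section of $G\to G/L$; together these exhibit $\pi_\psi\colon P_\psi\to G/L$ as a homogeneous $S$-bundle.

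Second, for the claim that every homogeneous $S$-bundle arises this way, I would reinterpret the orbit–stabilizer identification recalled just before the lemma. Given a homogeneous $S$-bundle $P\to G/L$ and a base point $p\in P$ over $eL$, that discussion already produces the homomorphism $\psi\colon L\to S$ with $\ell p = p\psi(\ell)$ and shows the $(G\times S)$-action $(g,s)\cdot p' = gp's^{-1}$ is transitive with stabilizer $(\mathrm{Id}\times\psi)(L)$. Hence the orbit map $(g,s)\mapsto g p s^{-1}$ descends to a $G$-equivariant diffeomorphism from $(G\times S)/(\mathrm{Id}\times\psi)(L)$ onto $P$, and by the identification in the previous paragraph the source is precisely $P_\psi$. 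It then remains only to check that this diffeomorphism covers the identity on $G/L$ and intertwines the $G$- and $S$-actions, both of which are immediate from the formula for the orbit map, so it is an isomorphism of homogeneous $S$-bundles.

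Finally, for the isomorphism classification, fix base points $p_i = [e,e]_{\psi_i} \in P_{\psi_i}$, each lying over $eL$ and satisfying $\ell\cdot p_i = p_i\cdot\psi_i(\ell)$. For the reverse implication, given $s_0\in S$ with $\psi_2 = s_0\psi_1 s_0^{-1}$, I would define a map $P_{\psi_1}\to P_{\psi_2}$ on representatives by $[g,s]_{\psi_1}\mapsto [g, s s_0^{-1}]_{\psi_2}$; a one-line computation shows that well-definedness on $\sim$-classes holds \emph{precisely} because $s_0\psi_1(\ell)s_0^{-1} = \psi_2(\ell)$, and one checks it is $G$- and $S$-equivariant, hence an isomorphism of homogeneous $S$-bundles. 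For the forward implication, given such an isomorphism $\Theta\colon P_{\psi_1}\to P_{\psi_2}$, note that $\Theta$ covers the identity on $G/L$, so $\Theta(p_1)$ lies in the fibre over $eL$ and equals $p_2\cdot s_0$ for a unique $s_0\in S$. Applying $\Theta$ to $\ell\cdot p_1 = p_1\cdot\psi_1(\ell)$ and using $G$- and $S$-equivariance gives $p_2\, s_0\,\psi_1(\ell) = p_2\,\psi_2(\ell)\,s_0$, and since $S$ acts freely on the fibre this forces $\psi_2(\ell) = s_0\,\psi_1(\ell)\,s_0^{-1}$, so $\psi_1$ and $\psi_2$ are element-conjugate. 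None of the three steps is deep, as this is essentially a repackaging of the quotient manifold theorem and orbit–stabilizer; I expect the only real (and mild) obstacle to be bookkeeping, namely consistently tracking the two commuting actions and matching the placement of inverses to the convention $(g,s)\sim(g\ell, s\psi(\ell))$, after which every verification reduces to a short group-multiplication check.
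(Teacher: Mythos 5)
Your proposal is correct and complete; note that the paper itself states this lemma as a standard fact and supplies no proof, so there is nothing to compare against beyond the orbit--stabilizer discussion in \S\ref{sub:HomBundle}, which your argument uses in exactly the intended way (the orbit map $(g,s)\mapsto gps^{-1}$ descending to the identification $P\cong (G\times S)/(\mathrm{Id}\times\psi)(L)$).

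The one place where the bookkeeping you flag actually bites is the principal right $S$-action on $P_\psi$. With the paper's convention $(g,s)\sim(g\ell, s\psi(\ell))$, right multiplication on the second factor, $[g,s]\cdot s' = [g, ss']$, is \emph{not} well defined (it would require $\psi(\ell)$ to commute with $s'$); the action induced by the orbit map $(g,s)\mapsto gps^{-1}$ is $[g,s]\cdot s' = [g, (s')^{-1}s]$, i.e.\ left multiplication by $(s')^{-1}$ on the $S$-factor, which does commute with the right $(\mathrm{Id}\times\psi)(L)$-action. Once that convention is fixed, your equivariance checks in the first and third steps (in particular $\ell\cdot[e,e]_{\psi} = [e,\psi(\ell)^{-1}]_{\psi} = [e,e]_{\psi}\cdot\psi(\ell)$, matching the paper's $\ell p = p\psi(\ell)$) all go through exactly as you describe, and the forward direction of the conjugacy claim follows from freeness of the fibre action as you state.
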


\subsection{The bundle extension lemma} \label{sub:BundleExtension}

\indent \indent Our goal is to study gauge-theoretic objects on suitable $S$-bundles over $X = X^* \cup X_{\mathrm{sing}}$, where $X = T^*\CP^2$ and $X_{\mathrm{sing}} = \CP^2$ is the zero section.   For a given structure group $S$, we will construct the ``suitable $S$-bundles" over $T^*\CP^2$ via the following process:
\begin{enumerate}
\item For each homomorphism $\lambda \colon \U(1) \to S$, the construction lemma \ref{lem:Construction} yields a homogeneous $S$-bundle $P_\lambda \to N_{1,1}$.  Letting $\mathrm{pr} \colon X^* \approx (0,\infty) \times N_{1,1} \to N_{1,1}$ denote the projection, we obtain an $S$-bundle $E_\lambda := \mathrm{pr}^*P_\lambda \to X^*$ over the principal locus via pullback.
\item For each homomorphism $\chi \colon \U(2) \to S$, the construction lemma \ref{lem:Construction} yields a homogeneous $S$-bundle $Q_\chi \to \CP^2$.  Letting $\Pi \colon T^*\CP^2 \to \CP^2$ denote the projection, we obtain an $S$-bundle $\widetilde{E}_\chi := \Pi^*Q_\chi \to X$ over the entire manifold via pullback.
\item Finally, for ``matching pairs" of homomorphisms $(\lambda, \chi)$, the bundle $\widetilde{E}_\chi \to X$ restricts to the principal locus to agree with $E_\lambda \to X^*$.
\end{enumerate}
\indent In this section, we provide an easily-checked necessary and sufficient criterion for determining whether two such homomorphisms $\lambda$ and $\chi$ ``match," and therefore whether $\widetilde{E}_\chi \to X$ is an extension of $E_\lambda \to X^*$.  This criterion appears to be well-known to experts, but we provide a proof for completeness.  Although we are exclusively interested in the case where the group diagram $G \geq H \geq K$ is $\SU(3) \geq \U(2) \geq \U(1)$, we will state the result in greater generality.

\begin{setup} \label{setup:cohom-one} Let $X$ be a cohomogeneity-one smooth manifold with a unique singular orbit $X_{\mathrm{sing}} \subset X$, and let $X^* = X - X_{\mathrm{sing}}$ denote the principal locus.  Suppose that $X$ has group diagram $G \geq H \geq K$, where $G,H,K$ are compact connected Lie groups.  
\begin{itemize}
\item Let $\iota \colon K \hookrightarrow H$ denote inclusion, and let $p \colon G/K \to G/H$ denote the natural projection.
\item Let $\mathrm{pr} \colon X^* \approx (0,\infty) \times (G/K) \to G/K$ denote the projection.
\item Let $\Pi \colon X \to G/H$ denote the natural extension of $p \circ \mathrm{pr} \colon X^* \to G/H$.
\end{itemize}
\end{setup}

\begin{prop}[Bundle extension lemma] \label{prop:BundleExtension} Let $\lambda \colon K \to S$ and $\chi \colon H \to S$ be homomorphisms, and let $\pi_\lambda \colon P_\lambda \to G/K$ and $\pi_\chi \colon Q_\chi \to G/H$ denote the associated homogeneous $S$-bundles.  The following are equivalent:
\begin{enumerate}[(1)]
\item The map $\lambda$ is element conjugate to $\chi \circ \iota$.
\item We have $P_\lambda \cong p^*Q_\chi$.
\item The bundle $\widetilde{E}_{\chi} := \Pi^*Q_\chi \to X$ is an extension of $E_\lambda := \mathrm{pr}^*P_\lambda \to X^*$.
\end{enumerate}
\end{prop}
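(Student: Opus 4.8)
The plan is to prove the two equivalences (1) $\Leftrightarrow$ (2) and (2) $\Leftrightarrow$ (3) separately, working throughout with the concrete models $P_\lambda = G \times_\lambda S$ and $Q_\chi = G \times_\chi S$ furnished by the construction lemma \ref{lem:Construction}. The crux is an explicit identification of the pullback $p^*Q_\chi$, which is itself a homogeneous $S$-bundle over $G/K$ (it inherits a $G$-action from $Q_\chi$, since $p \colon G/K \to G/H$ is $G$-equivariant). I would show directly that
\[
p^*Q_\chi \cong P_{\chi \circ \iota} = G \times_{\chi\circ\iota} S
\]
by writing down the map $[g,s]_{\chi\circ\iota} \mapsto (gK,\, [g,s]_\chi)$ and verifying three things: that it is well-defined (the relation $(g,s) \sim (g\kappa,\, s\,(\chi\circ\iota)(\kappa))$ for $\kappa \in K$ is respected, using $\iota(\kappa) \in H$ to collapse it via the relation defining $Q_\chi$), that it covers $\mathrm{id}_{G/K}$, and that it intertwines both the left $G$-action and the right $S$-action. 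This is the main computational step, and the principal obstacle: the only real delicacy is bookkeeping the two distinct equivalence relations (the $\chi$-twisting over $H$ versus the $(\chi\circ\iota)$-twisting over $K$), and confirming that the map is a genuine principal bundle isomorphism rather than merely a smooth bijection of total spaces.

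Granting this identification, (1) $\Leftrightarrow$ (2) is immediate from the classification half of Lemma \ref{lem:Construction}. Indeed, $P_\lambda \cong p^*Q_\chi$ holds if and only if $P_\lambda \cong P_{\chi\circ\iota}$, and by the lemma the latter holds if and only if $\lambda$ and $\chi \circ \iota$ are element conjugate, which is precisely statement (1).

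For (2) $\Leftrightarrow$ (3), I would exploit that, by the definition of $\Pi$ in Setup \ref{setup:cohom-one}, the map $\Pi$ restricts on $X^*$ to $p \circ \mathrm{pr}$. Functoriality of pullback then gives
\[
\widetilde{E}_\chi|_{X^*} = (p \circ \mathrm{pr})^* Q_\chi = \mathrm{pr}^*\!\left(p^*Q_\chi\right), \qquad E_\lambda = \mathrm{pr}^* P_\lambda .
\]
Since $\widetilde{E}_\chi = \Pi^* Q_\chi$ is already defined over all of $X$, the assertion that it is an \emph{extension} of $E_\lambda$ is exactly the assertion that its restriction to the principal locus satisfies $\mathrm{pr}^*(p^*Q_\chi) \cong \mathrm{pr}^* P_\lambda$ as $G$-equivariant $S$-bundles over $X^* \approx (0,\infty) \times (G/K)$; no further condition is imposed at the singular orbit, as the global bundle already supplies the extension there. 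The direction (2) $\Rightarrow$ (3) is then just pullback of an isomorphism along $\mathrm{pr}$.

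The remaining direction (3) $\Rightarrow$ (2) is where I would be most careful, though it is not hard: given a $G$-equivariant isomorphism over $X^*$, I would restrict it to a single slice $\{t_0\} \times (G/K) \cong G/K$. Because $\mathrm{pr}$ is the projection off the contractible factor $(0,\infty)$ and both bundles are pullbacks along $\mathrm{pr}$, this restriction is a well-defined $G$-equivariant isomorphism $p^*Q_\chi \cong P_\lambda$ over $G/K$, which is statement (2). Assembling (1) $\Leftrightarrow$ (2) with (2) $\Leftrightarrow$ (3) completes the proof.
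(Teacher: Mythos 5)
Your proposal is correct and follows essentially the same route as the paper's proof: the explicit map $[g,s]_{\chi\circ\iota}\mapsto (gK,[g,s]_\chi)$ you propose is exactly the isomorphism $F\colon P_{\chi\circ\iota}\to p^*Q_\chi$ that the paper obtains by factoring through $G\times S$, and your slice restriction for (3) $\Rightarrow$ (2) is the paper's right-inverse $j_1$ of $\mathrm{pr}$. No substantive differences.
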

\begin{proof}
Before beginning, observe that
\begin{align*}
p^*Q_\chi & = \{ (g_1K, [g_2, s]_\chi) \in G/K \times Q_\chi \colon g_1,g_2 \in G, s \in S,  g_1H = g_2H \}  \\
& = \{ (gK, [g,s]_\chi) \in G/K \times Q_\chi \colon g \in G, s \in S \}.
\end{align*}
In particular, we see that $p^*Q_\chi \to G/K$ is a homogeneous $S$-bundle.  Next, consider the following two maps from $G \times S$:
\begin{align*}
v \colon G \times S & \to p^*Q_\chi; & q_{\chi \circ \iota} \colon G \times S & \to P_{\chi \circ \iota};  \\
v(g,s) & = (gK, [g,s]_\chi) & q_{\chi \circ \iota}(g,s) & = [g,s]_{\chi \circ \iota}.
\end{align*}
One can check that the $v$-fibers and $q_{\chi \circ \iota}$-fibers are given by:
\begin{align*}
v^{-1}( aK, [a,t]_\chi) & = \{ (ak, t\chi(\iota(k))) \in G \times S \colon k \in K \} = q_{\chi \circ \iota}^{-1}( [a,t]_{\chi \circ \iota}).
\end{align*}
Since $v \colon G \times S \to p^*Q_\chi$ and $q_{\chi \circ \iota} \colon G \times S \to P_{\chi \circ \iota}$ are $G$-equivariant surjective submersions that are constant on each other's fibers, there exists unique $G$-equivariant diffeomorphism $F \colon P_{\chi \circ \iota} \to p^*Q_\chi$ for which $v = F \circ q_{\chi \circ \iota}$.  It is straightforward to check that $F$ is, in fact, a principal bundle isomorphism, so that $p^*Q_\chi \cong P_{\chi \circ \iota}$.
$$\begin{tikzcd}
                                              & G \times S \arrow[ld, "q_{\chi \circ \iota}"'] \arrow[rd, "v"] &                      \\
P_{\chi \circ \iota} \arrow[rd] \arrow[rr, "F"', dashed] &                                                     & p^*Q_\chi \arrow[ld] \\
                                              & G/K                                                 &                     
\end{tikzcd}$$
\indent (1) $\iff$ (2).  By the construction lemma \ref{lem:Construction}, the map $\lambda$ is element conjugate to $\chi \circ \iota$ if and only if $P_\lambda$ is isomorphic to $P_{\chi \circ \iota} = p^*Q_\chi$. \\
\indent (2) $\iff$ (3).  Let $\widetilde{E}_{\chi} \colon \Pi^*Q_\chi \to X$, let $j_{\mathrm{prin}} \colon X^* \hookrightarrow X$ denote the inclusion, and observe that
\begin{equation} \label{eq:BundlePullbacks}
j_{\mathrm{prin}}^*\widetilde{E}_{\chi} = j_{\mathrm{prin}}^*\Pi^*Q_\chi = (\Pi \circ j_{\mathrm{prin}})^*Q_\chi = (p \circ \mathrm{pr})^*Q_\chi = \mathrm{pr}^*(p^*Q_\chi).
\end{equation}
Now, if (2) holds, then (\ref{eq:BundlePullbacks}) gives $j_{\mathrm{prin}}^*\widetilde{E}_{\chi} = \mathrm{pr}^*(p^*Q_\chi) = \mathrm{pr}^*P_\lambda = E_\lambda$, which is (3).  Conversely, if (3) holds, then (\ref{eq:BundlePullbacks}) gives $\mathrm{pr}^*P_\lambda = E_\lambda = j_{\mathrm{prin}}^*\widetilde{E}_{\chi} = \mathrm{pr}^*(p^*Q_\chi)$.  Letting $j_1 \colon G/K \hookrightarrow X^*$ denote a right-inverse of $\mathrm{pr} \colon X^* \to G/K$, we have $P_\lambda = j_1^*(\mathrm{pr}^*P_\lambda) = j_1^*(\mathrm{pr}^*p^*Q_\lambda) = p^*Q_\lambda$, which is (2).
\end{proof}

\indent For use in $\S$\ref{sub:InvariantE}, we briefly consider the principal $S$-bundles $\widehat{\pi}_\lambda \colon E_\lambda \to X^*$ in greater detail.  For this, let $\phi \colon (0,\infty) \times G/K \to X^*$ be the identification map, so that every $x \in X^*$ is of the form $x = \phi(t, gK)$ for some $t \in (0,\infty)$ and $gK \in G/K$.  Essentially by definition, we have
$$E_\lambda = \mathrm{pr}^*P_\lambda = \left\{ ( \phi( t, \pi_\lambda(p) ), p) \in X^* \times P_\lambda \colon t \in (0,\infty), p \in P_\lambda\right\}\!.$$
Note that the total space $E_\lambda$ admits a left $G$-action via $g \cdot (x,p) = ( gx, gp)$ for $g \in G$, and the projection map $\widehat{\pi}_\lambda \colon E_\lambda \to X^*$ is $G$-equivariant with respect to this action.  We observe:

\begin{prop} \label{prop:Product} The principal $S$-bundle $\widehat{\pi}_\lambda \colon E_\lambda \to X^*$ is $G$-equivariantly isomorphic to the product bundle $(\mathrm{Id}, \pi_\lambda) \colon (0,\infty) \times P_\lambda \to (0,\infty) \times (G/K)$.
\end{prop}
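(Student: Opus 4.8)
The plan is to write down the obvious candidate isomorphism explicitly and check it has the required properties; there is no deep content, only bookkeeping. Recall from the cohomogeneity-one structure that the identification $\phi \colon (0,\infty) \times (G/K) \to X^*$ is $G$-equivariant, where $G$ acts trivially on the $(0,\infty)$ factor and in the standard way on $G/K$; this is just the slice-theorem description of the principal locus. I would then define
$$\Psi \colon (0,\infty) \times P_\lambda \to E_\lambda, \qquad \Psi(t,p) = (\phi(t, \pi_\lambda(p)), p).$$
Using the explicit description $E_\lambda = \{ (\phi(t,\pi_\lambda(p)),p) \colon t \in (0,\infty),\, p \in P_\lambda\}$ recalled just above the statement, one checks $\mathrm{pr}(\phi(t,\pi_\lambda(p))) = \pi_\lambda(p)$, so $\Psi$ indeed lands in $E_\lambda$, and $\Psi$ is a bijection: its inverse sends $(\phi(t,\pi_\lambda(p)),p)$ to $(t,p)$, which is well-defined and smooth because $\phi$ is a diffeomorphism and hence $t$ is recovered smoothly from the first coordinate via $\phi^{-1}$. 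Both $\Psi$ and $\Psi^{-1}$ are manifestly smooth.

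Next I would verify the three compatibilities that upgrade $\Psi$ to a $G$-equivariant principal bundle isomorphism. For the right $S$-action, since $\pi_\lambda$ is $S$-invariant we have $\pi_\lambda(p \cdot s) = \pi_\lambda(p)$, whence $\Psi(t, p\cdot s) = (\phi(t,\pi_\lambda(p)), p\cdot s) = \Psi(t,p)\cdot s$, so $\Psi$ is $S$-equivariant. For the base map, $\widehat{\pi}_\lambda(\Psi(t,p)) = \phi(t,\pi_\lambda(p)) = \phi((\mathrm{Id},\pi_\lambda)(t,p))$, so $\Psi$ covers the diffeomorphism $\phi$; together with $S$-equivariance this exhibits $\Psi$ as a principal bundle isomorphism lying over the base identification $\phi$. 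Finally, writing the $G$-action on the product as $g\cdot(t,p) = (t,gp)$ and using $G$-equivariance of both $\pi_\lambda$ and $\phi$, I would compute
$$\Psi(t, gp) = (\phi(t, g\cdot\pi_\lambda(p)), gp) = (g\cdot \phi(t, \pi_\lambda(p)), gp) = g\cdot \Psi(t,p),$$
so $\Psi$ intertwines the two $G$-actions.

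I do not expect any genuine obstacle. The only point deserving care—and the sole place the cohomogeneity-one hypothesis actually enters—is the $G$-equivariance of $\phi$, i.e.\ that $G$ acts trivially on the radial factor $(0,\infty)$. This is exactly what forces the left $G$-action on $E_\lambda = \mathrm{pr}^* P_\lambda$ (namely $g\cdot(x,p) = (gx,gp)$) to correspond to the product $G$-action on $(0,\infty) \times P_\lambda$; everything else follows formally from $E_\lambda$ being the pullback of $P_\lambda$ along the projection $\mathrm{pr}$.
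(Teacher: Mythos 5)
Your proposal is correct and uses exactly the same map as the paper's proof, namely $(t,p) \mapsto (\phi(t,\pi_\lambda(p)),p)$, with the same verification that it is a $G$-equivariant bundle isomorphism covering $\phi$; you simply spell out the bijectivity, $S$-equivariance, and $G$-equivariance checks that the paper leaves implicit. No issues.
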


\begin{proof} The map $F \colon (0,\infty) \times P_\lambda \to E_\lambda$ given by $F(t,p) = ( \phi(t, \pi_\lambda(p)), p)$ is a $G$-equivariant diffeomorphism.  Moreover, it is clear that $\widehat{\pi}_\lambda(F(t,p)) = \phi(t, \pi_\lambda(p))$ for all $(t,p) \in (0,\infty) \times P_\lambda$, showing that $\widehat{\pi}_\lambda \circ F = \phi \circ (\mathrm{Id}, \pi_\lambda)$, which means that $F$ is a bundle isomorphism.
\end{proof}

\subsection{Invariant connections on $P_\lambda$}  \label{sub:InvariantP}

\indent \indent We now turn our attention to invariant connections on homogeneous bundles.  Let $P \to G/K$ be a homogeneous $S$-bundle, where $G$ is a compact connected Lie group, $K \leq G$ is a closed subgroup, and $S$ is a compact Lie group.  Recall that the smooth manifold $P$ admits both a left $G$-action as well as a right $S$-action.

\begin{defn} A connection $A \in \Omega^1(P; \mathfrak{s})$ is \emph{$G$-invariant} (or just \emph{invariant}) if $L_g^*A = A$ for all $g \in G$, where $L_g \colon P \to P$ denotes the left $G$-action.
\end{defn}

\indent Note that any connection $A \colon TP \to \mathfrak{s}$, invariant or otherwise, is always equivariant with respect to the right $S$-action on $P$ and the adjoint $S$-action on $\mathfrak{s}$.  The following well-known result classifies invariant connections on homogeneous bundles.

\begin{thm}[Wang's theorem \cite{wang1958invariant}] Let $P_\lambda \to G/K$ be a homogeneous $S$-bundle associated to a homomorphism $\lambda \colon K \to S$.  Let $\mathfrak{m} \subset \mathfrak{g}$ be a reductive complement, so that $\mathfrak{g} = \mathfrak{m} \oplus \mathfrak{k}$.  There is a bijective correspondence between:
\begin{enumerate}
\item $G$-invariant connections on $P_\lambda$, and
\item $K$-equivariant linear maps $\Lambda \colon \mathfrak{m} \to \mathfrak{s}$.
\end{enumerate}
By ``$K$-equivariant," we mean that $\Lambda \circ \mathrm{Ad}_k = \mathrm{Ad}_{\lambda(k)} \circ \Lambda$ for all $k \in K$.
\end{thm}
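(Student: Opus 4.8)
The plan is to exploit the fact, established right after the construction lemma, that $P_\lambda = G \times_\lambda S$ is itself a homogeneous space $P_\lambda \cong (G \times S)/K$, where $K$ is embedded via $k \mapsto (k, \lambda(k))$ and the $(G \times S)$-action $(g,s)\cdot p = gps^{-1}$ is transitive. Since any connection $A \in \Omega^1(P_\lambda; \mathfrak{s})$ is automatically equivariant for the right $S$-action (via $\Ad$ on $\mathfrak{s}$), a $G$-invariant connection is invariant under all of $G \times S$ up to the adjoint twist on the target. Hence $A$ is completely determined by its single value $A_{p_0}$ at the base point $p_0 = [e,e]_\lambda$, and the whole problem reduces to (i) describing which linear maps $A_{p_0}$ can occur, and (ii) checking that such a datum extends consistently to all of $P_\lambda$.

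For step (i), I would identify $T_{p_0} P_\lambda \cong \mathfrak{m} \oplus \mathfrak{s}$. The isotropy subalgebra of $\mathfrak{g}\oplus\mathfrak{s}$ at $p_0$ is the graph $\mathfrak{k}_\Delta = \{(Z, \lambda_* Z) : Z \in \mathfrak{k}\}$ with $\lambda_* = d\lambda_e$, and one verifies that $\mathfrak{m} \oplus \mathfrak{s}$ is a complement to $\mathfrak{k}_\Delta$ using $\mathfrak{m}\cap\mathfrak{k} = 0$ together with a dimension count. Under this identification, the fundamental vector fields of the right $S$-action span $\{0\}\oplus\mathfrak{s}$, so the connection axiom forcing $A$ to be the identity on vertical vectors gives $A_{p_0}(0,X) = X$. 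Setting $\Lambda := A_{p_0}|_{\mathfrak{m}\oplus\{0\}} \colon \mathfrak{m}\to\mathfrak{s}$, we obtain $A_{p_0}(Y,X) = \Lambda(Y) + X$, so invariant connections correspond to arbitrary linear $\Lambda$, subject only to the constraint coming from the isotropy.

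That constraint is the heart of the argument. The stabilizer $\{(k,\lambda(k)) : k\in K\}$ acts on $T_{p_0}P_\lambda \cong \mathfrak{m}\oplus\mathfrak{s}$ by the linear isotropy representation $\Ad_k \oplus \Ad_{\lambda(k)}$, where the first summand preserves $\mathfrak{m}$ precisely because the decomposition is reductive. Writing the action of $(k,\lambda(k))$ as $L_k R_{\lambda(k)^{-1}}$ and combining $L_k^*A = A$ with $R_s^*A = \Ad_{s^{-1}}\circ A$, the invariance of $A$ forces its value at the fixed point $p_0$ to intertwine the isotropy action with the adjoint action on the target:
\[
A_{p_0}\circ(\Ad_k \oplus \Ad_{\lambda(k)}) = \Ad_{\lambda(k)}\circ A_{p_0}.
\]
Substituting $A_{p_0}(Y,X) = \Lambda(Y)+X$, the $\mathfrak{s}$-components cancel and one is left with exactly $\Lambda\circ\Ad_k = \Ad_{\lambda(k)}\circ\Lambda$ for all $k\in K$, which is the asserted $K$-equivariance.

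For the converse, given a $K$-equivariant $\Lambda$ I would define $A_{p_0}(Y,X) = \Lambda(Y)+X$ and propagate it over $P_\lambda$ by $G$-invariance and right $S$-equivariance. The main obstacle, and the only place the hypothesis on $\Lambda$ is genuinely used, is verifying that this propagation is \emph{well-defined}: the two ways of transporting a tangent vector back to $p_0$ along the $G$- and $S$-orbit directions must agree, and this consistency is governed precisely by the stabilizer action, so $K$-equivariance of $\Lambda$ is both necessary and sufficient. Once well-definedness is secured, checking the remaining connection axioms is routine, since they hold at $p_0$ by construction and are preserved by the invariant extension.
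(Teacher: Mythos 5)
The paper does not prove this statement; it is quoted as a known result with a citation to Wang, and the only trace of an argument in the text is the remark immediately afterwards that the invariant connection $\alpha_{\lambda,\Lambda}$ pulls back along $\Psi\colon G \to P_\lambda$ to the left-invariant extension of $\Lambda + d\lambda|_e \colon \mathfrak{m}\oplus\mathfrak{k} \to \mathfrak{s}$. Your argument is the standard intrinsic proof and is correct in outline: identifying $P_\lambda \cong (G\times S)/K$ with isotropy the graph of $\lambda$, using $T_{p_0}P_\lambda \cong \mathfrak{m}\oplus\mathfrak{s}$ (the transversality check via $\mathfrak{m}\cap\mathfrak{k}=0$ and dimension count is right), pinning down $A_{p_0}(Y,X)=\Lambda(Y)+X$ from the verticality axiom, and extracting $\Lambda\circ\Ad_k = \Ad_{\lambda(k)}\circ\Lambda$ from the fixed-point identity $A_{p_0}\circ(\Ad_k\oplus\Ad_{\lambda(k)}) = \Ad_{\lambda(k)}\circ A_{p_0}$ — all of this is sound, and you correctly locate the only genuine work in the converse direction in the well-definedness of the propagated form. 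The one place where your sketch could be tightened is exactly there: rather than arguing abstractly that the two transport paths agree, the cleanest route (and the one the paper implicitly takes) is to write down the candidate explicitly as the left-invariant $\mathfrak{s}$-valued $1$-form $\widehat{\alpha} = \Lambda\circ\mathrm{pr}_{\mathfrak{m}} + d\lambda|_e\circ\mathrm{pr}_{\mathfrak{k}}$ on $G$ and check that it descends through $G\times S \to P_\lambda$; the $K$-equivariance of $\Lambda$ is precisely the descent condition, and smoothness comes for free. Either way, no step of your argument would fail.
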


\begin{defn} Let $P_\lambda \to G/K$ be a homogeneous $S$-bundle.  Fix a reductive complement $\mathfrak{m} \subset \mathfrak{g}$.
\begin{itemize}
\item Let $\alpha_{\lambda, \Lambda} \in \Omega^1(P_\lambda; \mathfrak{s})$ denote the $G$-invariant connection corresponding to the $K$-equivariant linear map $\Lambda \colon \mathfrak{m} \to \mathfrak{s}$.
\item The \emph{canonical connection} on $P_\lambda$ is the $G$-invariant connection $\alpha_{\lambda, 0} \in \Omega^1(P_\lambda; \mathfrak{s})$ corresponding to the zero map $\Lambda = 0$.  It may be characterized as the $G$-invariant connection having $\mathrm{Ker}(\alpha_{\lambda, 0}) = \mathfrak{m}$.
\end{itemize}
\end{defn}

\indent Now, let $\Psi \colon G \to P_\lambda$ denote the composition $G \hookrightarrow G \times S \to P_\lambda$.  Given a connection $\alpha \in \Omega^1(P_\lambda; \mathfrak{s})$, its curvature $F_\alpha \in \Omega^2(P_\lambda; \mathfrak{s})$ pulls back to
\begin{equation} \label{eq:CurvatureGeneral}
\Psi^*F_\alpha = d\widehat{\alpha} + \frac{1}{2}[\widehat{\alpha} \wedge \widehat{\alpha}] \in \Omega^2(G; \mathfrak{s}),
\end{equation}
where here $\widehat{\alpha} = \Psi^*\alpha \in \Omega^1(G; \mathfrak{s})$.  Moreover, if $\Lambda \colon \mathfrak{m} \to \mathfrak{s}$ is a $K$-equivariant linear map, then the left-invariant extension of $\Lambda + d\lambda|_e \colon \mathfrak{m} \oplus \mathfrak{k} \to \mathfrak{s}$ on $G$ is the pullback $\widehat{\alpha}_{\lambda, \Lambda} = \Psi^*\alpha_{\lambda, \Lambda} \in \Omega^1(G; \mathfrak{s})$ of a $G$-invariant connection on $P_\lambda$.  Conversely, every $G$-invariant connection $\alpha_{\lambda, \Lambda} \in \Omega^1(P_\lambda, \mathfrak{s})$ pulls back to such a left-invariant extension. \\
\indent For the purposes of computation, we shall follow standard practice and identify connections $\alpha$ on $P_\lambda$ and their curvatures $F_\alpha$ with their respective pullbacks $\widehat{\alpha} = \Psi^*\alpha$ and $\Psi^*F_\alpha$ to $G$.  In other words, we shall suppress $\Psi^*$ from the notation, and use the same symbol $\alpha$ to denote both a connection on $P_\lambda$ as well as its pullback to $G$.

\subsection{Invariant connections on $E_\lambda$} \label{sub:InvariantE}

\indent \indent We now discuss invariant connections on the bundles $E_\lambda \to X^*$.  We continue to work in the context of Setup \ref{setup:cohom-one}.  That is, $X$ is a cohomogeneity-one smooth manifold with a unique singular orbit $X_{\mathrm{sing}}$ and group diagram $G \geq H \geq K$, and $\mathrm{pr} \colon X^* \approx (0,\infty) \times G/K \to G/K$ denotes the projection. \\
\indent Let $\lambda \colon K \to S$ be a homomorphism, let $\pi_\lambda \colon P_\lambda \to G/K$ be the associated homogeneous $S$-bundle, and set $E_\lambda := \mathrm{pr}^*P_\lambda \to X^*$ as in $\S$\ref{sub:BundleExtension}.  Recall that $E_\lambda \subset X^* \times P_\lambda$ admits a left $G$-action by restricting the diagonal action.  A connection $A \in \Omega^1(E_\lambda; \mathfrak{s})$ is called \emph{$G$-invariant} (or just \emph{invariant}) if $L_g^*A = A$ for all $g \in G$, where $L_g \colon E_\lambda \to E_\lambda$ denotes the left $G$-action. \\
\indent Let $A \in \Omega^1(E_\lambda; \mathfrak{s})$ denote an $\mathfrak{s}$-valued $1$-form.  By Proposition \ref{prop:Product}, we may identify the bundle $E_\lambda \to X^*$ with the product bundle $(0,\infty) \times P_\lambda \to (0,\infty) \times (G/K)$.  Letting $p_2 \colon E_\lambda \approx (0,\infty) \times P_\lambda \to P_\lambda$ denote the projection map, and letting $t$ denote the coordinate on $(0,\infty)$, we can write
\begin{equation}
A = \gamma\,dt + \widetilde{\alpha}(t),
\end{equation}
where $\gamma \in \Omega^0(E_\lambda; \mathfrak{s})$ and each $\widetilde{\alpha}(t) = p_2^*(\alpha(t))$ is the pullback of some $1$-form $\alpha(t) \in \Omega^1(P_\lambda; \mathfrak{s})$.  Note that for a given $A \in \Omega^1(E_\lambda; \mathfrak{s})$, the vector-valued forms $\gamma$ and $\alpha(t)$ are uniquely determined.  Moreover, if $A$ is a connection on $E_\lambda$, then each $\alpha(t)$ is a connection on $P_\lambda$.

\begin{defn} A connection $A \in \Omega^1(E_\lambda; \mathfrak{s})$ is said to be \emph{in temporal gauge} if $\gamma = 0$.
\end{defn}

\indent Note that if $A = \widetilde{\alpha}(t)$ is a $G$-invariant connection on $E_\lambda$ in temporal gauge, then each $\alpha(t)$ is a $G$-invariant connection on $P_\lambda$.  Conversely, if each $\alpha(t)$ is a $G$-invariant connection on $P_\lambda$, then $A := p_2^*(\alpha(t))$ is a $G$-invariant connection on $E_\lambda$ in temporal gauge.  \\
\indent The following well-known proposition assures us that, after a gauge transformation, every connection on $E_\lambda$ can be put in temporal gauge.

\begin{prop}[Temporal gauge] Let $A = \gamma\,dt + \widetilde{\alpha}(t)$ be a connection on $E_\lambda$.  Then there exists a gauge transformation $u \colon E_\lambda \to S$ such that $u \cdot A$ is in temporal gauge, and hence $u \cdot A = \widetilde{\rho}(t)$ for some connection forms $\rho(t)$ on $P_\lambda$. \\
\indent Moreover, if $A$ is $G$-invariant, then $u$ can be chosen such that $u \cdot A$ is $G$-invariant, and hence each $\rho(t)$ is a $G$-invariant connection on $P_\lambda$.
\end{prop}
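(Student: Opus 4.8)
The plan is to remove the $dt$-component of $A$ by solving an ordinary differential equation in the $t$-variable, and then to exploit uniqueness of ODE solutions to propagate both the right $S$-equivariance (so that $u$ is a genuine gauge transformation) and, when present, the $G$-invariance.

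First I would recall that a gauge transformation $u \colon E_\lambda \to S$ acts on the connection form by $u \cdot A = \Ad_u A - (du)u^{-1}$ (in matrix notation). Writing $du = (\partial_t u)\,dt + d_P u$, where $d_P u$ differentiates only in the $P_\lambda$-directions, and using that $\widetilde{\alpha}(t)$ carries no $dt$-component, the $dt$-component of $u \cdot A$ is $\Ad_u \gamma - (\partial_t u)u^{-1}$. Hence $u \cdot A$ is in temporal gauge precisely when
\begin{equation*}
\partial_t u = u\,\gamma,
\end{equation*}
a linear first-order ODE in $t$ in which the point of $P_\lambda$ enters merely as a smooth parameter. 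Fixing a basepoint $t_0 \in (0,\infty)$ and imposing $u|_{t=t_0} \equiv e$, the standard existence, uniqueness, and smooth-dependence-on-parameters theory for linear ODEs (equivalently, the $t$-directed path-ordered exponential of $\gamma$) produces a smooth $u \colon E_\lambda \to S$. By construction the $dt$-component of $u \cdot A$ vanishes, so by the decomposition established just above it has the form $u \cdot A = \widetilde{\rho}(t)$ for connection forms $\rho(t)$ on $P_\lambda$.

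Next I would check that $u$ is a genuine gauge transformation, i.e.\ that it is $\Ad$-equivariant for the right $S$-action: $u(p\cdot s) = \Ad_{s^{-1}}u(p)$. Since $R_s$ fixes the $t$-coordinate and connection forms obey $R_s^* A = \Ad_{s^{-1}}A$, the component satisfies $\gamma(p\cdot s) = \Ad_{s^{-1}}\gamma(p)$. A short computation then shows that $w(t,p) := \Ad_s\bigl(u(t,p\cdot s)\bigr)$ solves the same ODE $\partial_t w = w\,\gamma$ with the same initial value $w|_{t=t_0}\equiv e$, so uniqueness forces $w = u$, which is exactly the required equivariance. For the moreover clause, suppose $A$ is $G$-invariant. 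Because the $G$-action preserves the $t$-coordinate (it moves points only within the $G/K$-slices, lifted to $P_\lambda$), matching $dt$-components in $L_g^* A = A$ gives $L_g^*\gamma = \gamma$. Running the same uniqueness argument with the $G$-action in place of the $S$-action — the map $v(t,p) := u(t, g\cdot p) = (L_g^*u)(t,p)$ solves $\partial_t v = v\,\gamma$ with $v|_{t=t_0}\equiv e$, hence $v = u$ — shows $L_g^* u = u$ for all $g \in G$. Naturality of the gauge action then gives $L_g^*(u\cdot A) = (L_g^* u)\cdot(L_g^* A) = u\cdot A$, so $u\cdot A$ is $G$-invariant and each $\rho(t)$ is a $G$-invariant connection on $P_\lambda$.

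I expect the only genuine subtlety — rather than a real obstacle — to be bookkeeping: pinning down the sign and inverse conventions in the gauge action so that the temporal-gauge condition comes out as the solvable linear ODE $\partial_t u = u\gamma$, and confirming the smooth dependence of its solution on the spatial parameter. Everything substantive reduces to uniqueness of solutions of $\partial_t u = u\gamma$, which simultaneously delivers existence, the right $S$-equivariance of $u$, and (in the invariant case) its $G$-invariance.
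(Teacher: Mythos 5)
Your proof is correct and follows essentially the same route as the paper's: reduce the temporal-gauge condition to the linear ODE $\partial_t u = u\gamma$ on $(0,\infty)\times P_\lambda$, solve it with initial value the identity at a fixed $t_0$, and use uniqueness of ODE solutions to propagate $G$-invariance of $u$ (and hence of $u\cdot A$). The only difference is that you additionally verify the right $S$-equivariance of $u$, so that it is a genuine gauge transformation --- a point the paper's sketch leaves implicit --- which is a welcome extra check rather than a divergence in method.
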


\begin{proof} Since this is a standard fact, we will simply provide a sketch.  Regard $E_\lambda \cong (0,\infty) \times P_\lambda$, let $u \colon (0,\infty) \times P_\lambda \to S$ be a gauge transformation, and write $u \cdot A = \xi\,dt + \widetilde{\rho}(t)$.  Then $\xi = u\gamma u^{-1} + u(\partial_t u^{-1}) = u \gamma u^{-1} - (\partial_t u)u^{-1}$.  Thus, the temporal gauge condition $\xi = 0$ is equivalent to $u$ solving the ODE $\partial_t u(t,p) = u(t,p) \gamma(t,p)$.  Fixing $t_0 \in (0, \infty)$ and imposing the initial condition $u(t_0, \cdot) = \mathrm{Id}_S$, existence and uniqueness of $u$ follows from standard ODE theory.   \\
\indent Suppose $A$ is $G$-invariant.  For each $g \in G$, let $v_g \colon (0,\infty) \times P_\lambda \to S$ be $v_g(t,p) = u(t, g^{-1}p)$.  Using the $G$-invariance of $A$, one can show that $v_g$ solves the initial-value problem given by $\partial_t v_g(t,p) = v_g(t,p) \gamma(t,p)$ and $v_g(t_0, \cdot) = \mathrm{Id}_S$.  Thus, uniqueness implies that $v_g = u$, which means that $u$ is $G$-invariant, and hence $u \cdot A$ is $G$-invariant.
\end{proof}

\indent Writing $\widetilde{\alpha}_t = \widetilde{\alpha}(t)$ for brevity, let $A = \widetilde{\alpha}(t)$ be a connection on $E_\lambda$ in temporal gauge.  Then its curvature can be calculated as follows:
\begin{align*}
F_A = d\widetilde{\alpha}_t + \frac{1}{2}[ \widetilde{\alpha}_t \wedge \widetilde{\alpha}_t ] & = dt \wedge \partial_t \widetilde{\alpha}_t  + p_2^*(d\alpha_t) + \frac{1}{2}[ \widetilde{\alpha}_t \wedge \widetilde{\alpha}_t ] \\
& = dt \wedge \partial_t \widetilde{\alpha}_t + p_2^*F_{\alpha_t}.
\end{align*}
We will often abuse notation by omitting subscripts and pullbacks, writing the above formula as simply
\begin{equation} \label{eq:CurvatureFA}
F_A = dt \wedge \partial_t \alpha + F_\alpha.
\end{equation}

\section{Invariant instantons on $S^1$-bundles over $T^*\CP^2$} \label{sec:InvarInstS1}

\indent \indent We are at last in a position to study gauge-theoretic objects over $X = T^*\CP^2$.  For the remainder of this work, we let $X = T^*\CP^2$ equipped with the Calabi metric (\ref{eq:Metric1})-(\ref{eq:Metric2}) and hyperk\"{a}hler structure (\ref{eq:CalabiHK}).  Recall from $\S$\ref{sub:CalabiCohomOne} that $X$ is cohomogeneity-one with group diagram $G \geq H \geq K$ being $\SU(3) \geq \U(2) \geq \U(1)$, where the embeddings are given by (\ref{eq:U1})-(\ref{eq:U2}).  In particular, the inclusion map $\iota \colon \U(1) \hookrightarrow \U(2)$ is given by
\begin{equation} \label{eq:KHInclusion}
\iota(e^{i\theta}) = \begin{pmatrix} e^{i\theta} & 0 \\ 0 & e^{i\theta} \end{pmatrix}\!.
\end{equation}
Recall also that $X$ has a unique singular orbit $X_{\mathrm{sing}} = G/H = \CP^2$ given by the zero section, and the principal orbits are equivariantly diffeomorphic to the Aloff-Wallach space $G/K = N_{1,1}$. \\
\indent In $\S$\ref{sub:InvS1Bundle}, we classify invariant $S^1$-bundles $P_n \to N_{1,1}$ and $Q_k \to \CP^2$.  Via pullback, each homogeneous $S^1$-bundle $P_n \to N_{1,1}$ yields an $S^1$-bundle $E_n \to X^*$ over the principal locus.  In Proposition \ref{prop:S1Extension}, we show that $E_n \to X^*$ admits an extension $\widetilde{E}_n \to X$ whose restriction to the zero section is itself a homogeneous bundle over $\CP^2$ if and only if $n$ is even.  In $\S$\ref{sub:InvariantConnectionsS1}, we describe the $\SU(3)$-invariant connections on $E_n \to X^*$ via Wang's theorem. \\
\indent Section \ref{sub:InvarInstS1} is the core of the paper.  In Theorem \ref{thm:Spin7Class}, we classify the $\SU(3)$-invariant $L$-$\Spin(7)$-instantons on $E_n \to X^*$, finding a $3$-parameter family modulo gauge for each $L \in \{I,J,K\}$.  In Theorems \ref{thm:SU4Class} and \ref{cor:Sp2}, we observe that each $3$-parameter family contains a $1$-parameter subfamily of $L$-$\SU(4)$-instantons, and that every pair of these $1$-parameter subfamilies intersects at the same point: the unique invariant $\Sp(2)$-instanton on $E_n$. \\
\indent Finally, we ask which of these instantons (if any) extend across the singular locus.  In Theorem \ref{thm:ExtendInstS1}, we prove that for each $L \in \{I,J,K\}$, the collection of invariant $L$-$\Spin(7)$-instantons that extends to all of $X$ forms a one-parameter family modulo gauge.  Moreover, the only $L$-$\SU(4)$-instanton in such a family is the invariant $\Sp(2)$-instanton.

\subsection{Invariant $S^1$-bundles} \label{sub:InvS1Bundle}

\indent \indent We begin by classifying the homogeneous $S^1$-bundles over $G/K = N_{1,1}$, as well as the homogeneous $S^1$-bundles over $G/H = \CP^2$.

\begin{prop}[Homogeneous $S^1$-bundles \cite{ball2019gauge}] ${}$
\begin{enumerate}[(a)]
\item Up to conjugacy, every Lie group homomorphism $\lambda \colon \U(1) \to S^1$ is of the form $\lambda = \lambda_n$, where
$$\lambda_n(z) = z^n, \ \ n \in \Z.$$
Consequently, every homogeneous $S^1$-bundle over $N_{1,1}$ is of the form
$$P_n := \frac{\SU(3) \times S^1}{\U(1)} = \frac{\SU(3) \times S^1}{\sim_n}, \ \text{ where } (g,s) \sim_n (gz, s \lambda_n(z)),$$
for $(g,s) \in \SU(3) \times S^1$ and $z \in \U(1)$.
\item Up to conjugacy, every Lie group homomorphism $\chi \colon \U(2) \to S^1$ is of the form $\chi = \chi_k$, where
$$\chi_k(A) = \det(A)^k, \ \ k \in \Z.$$
Consequently, every homogeneous $S^1$-bundle over $\CP^2$ is of the form
$$Q_k := \frac{\SU(3) \times S^1}{\U(2)} = \frac{\SU(3) \times S^1}{\sim_k}, \ \text{ where } (g,s) \sim_k (gA, s \chi_k(A)),$$
for $(g,s) \in \SU(3) \times S^1$ and $A \in \U(2)$.
\end{enumerate}
\end{prop}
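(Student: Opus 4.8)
The plan is to reduce everything to the classification of Lie group homomorphisms into the abelian circle group $S^1$. The two ``Consequently'' assertions are then immediate from the construction lemma (Lemma \ref{lem:Construction}): homogeneous $S^1$-bundles over $G/L$ are classified up to isomorphism by element-conjugacy classes of homomorphisms $L \to S^1$. Since the target $S^1$ is abelian, conjugation by any $s \in S^1$ is trivial, so element-conjugacy collapses to genuine equality. Hence it suffices to enumerate the homomorphisms $\lambda \colon \U(1) \to S^1$ and $\chi \colon \U(2) \to S^1$ exactly, and to verify that the $\lambda_n$ (resp.\ $\chi_k$) exhaust them and are pairwise distinct.

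For part (a), I would classify $\Hom(S^1, S^1)$ by passing to Lie algebras. A Lie group homomorphism $\lambda$ is determined by its differential $d\lambda_e \colon \mathfrak{u}(1) \to \mathfrak{u}(1)$; identifying $\mathfrak{u}(1) \cong \R$, this is multiplication by some real scalar $c$, and compatibility with the exponential map gives $\lambda(e^{i\theta}) = e^{ic\theta}$. The key step is the integrality constraint: for $\lambda$ to be well-defined on $S^1 = \R/2\pi\Z$ we need $e^{2\pi i c} = 1$, forcing $c = n \in \Z$, whence $\lambda = \lambda_n$. (Equivalently, one may simply invoke the standard description of the character group of the circle.) Distinctness of the $\lambda_n$ is then clear.

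For part (b), I would exploit that the target is abelian to reduce to part (a). Since $S^1$ is abelian, any homomorphism $\chi \colon \U(2) \to S^1$ annihilates the commutator subgroup $[\U(2), \U(2)]$, and so factors through the abelianization. The step requiring care is identifying this abelianization: one checks that $[\U(2), \U(2)] = \SU(2)$ and that $\det \colon \U(2) \to S^1$ descends to an isomorphism $\U(2)/\SU(2) \xrightarrow{\sim} S^1$. Consequently $\chi$ factors as $\U(2) \xrightarrow{\det} S^1 \xrightarrow{\ell} S^1$ for a unique homomorphism $\ell$, and applying part (a) to $\ell$ yields $\ell(z) = z^k$, i.e.\ $\chi(A) = \det(A)^k = \chi_k(A)$.

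There is no serious analytic difficulty here, as this is standard compact-Lie-group theory; the main obstacle is therefore merely to pin down the two genuinely substantive verifications. These are (i) the integrality of the exponent $n$ in part (a), which is precisely where the compactness of the source enters, and (ii) the computation $[\U(2), \U(2)] = \SU(2)$ together with the resulting isomorphism $\U(2)/\SU(2) \cong S^1$ in part (b). Everything else is formal, and the passage from homomorphisms to bundles is handled entirely by Lemma \ref{lem:Construction} in combination with the triviality of conjugation in the abelian group $S^1$.
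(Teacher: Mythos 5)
Your argument is correct. Note that the paper itself gives no proof of this proposition (it is stated with a citation to Ball--Oliveira), so there is nothing to compare against; your write-up simply supplies the standard argument the authors omit. The two substantive points you isolate are exactly right and both check out: integrality of the exponent in $\Hom(\U(1),S^1)$ follows from well-definedness on $\R/2\pi\Z$, and since $\SU(2)$ is perfect one has $[\U(2),\U(2)]=\SU(2)$ with $\det$ inducing $\U(2)/\SU(2)\cong S^1$, so every $\chi$ factors as $\ell\circ\det$ and part (a) applied to $\ell$ finishes. The reduction of element-conjugacy to equality because $S^1$ is abelian, and the appeal to the construction lemma for the bundle statements, are also exactly as the paper intends.
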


\indent Let $\mathrm{pr} \colon X^* \approx (0,\infty) \times N_{1,1} \to N_{1,1}$ denote the projection.  For each $n \in \mathbb{Z}$, we have an $S^1$-bundle $E_n := \mathrm{pr}^*P_n \to X^*$ over the principal locus.  On the other hand, letting $\Pi \colon T^*\CP^2 \to \CP^2$ denote the standard projection, for each $k \in \mathbb{Z}$, we have an $S^1$-bundle $\widetilde{E}_k := \Pi^*Q_k \to X$ over the entire manifold. \\
\indent We now ask which pairs $(n,k) \in \Z \times \Z$ have the property that $\widetilde{E}_k \to X$ is an extension of $E_n \to X^*$ across the singular orbit.  The answer is given by:

\begin{prop}[$S^1$-bundle extension] \label{prop:S1Extension} The $S^1$-bundle $\widetilde{E}_k \to X$ is an extension of $E_n \to X^*$ if and only if $n = 2k$.
\end{prop}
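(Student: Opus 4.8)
The plan is to invoke the Bundle Extension Lemma (Proposition \ref{prop:BundleExtension}) directly, since the group diagram $\SU(3) \geq \U(2) \geq \U(1)$ is exactly the situation covered there. First I would observe that the bundles in the statement are literally the bundles of that lemma: taking $\lambda = \lambda_n$ gives $E_n = \mathrm{pr}^*P_n = E_\lambda$, and taking $\chi = \chi_k$ gives $\widetilde{E}_k = \Pi^*Q_k = \widetilde{E}_\chi$. Hence the lemma tells us that $\widetilde{E}_k \to X$ is an extension of $E_n \to X^*$ if and only if $\lambda_n$ is element conjugate to $\chi_k \circ \iota$ as homomorphisms $\U(1) \to S^1$.

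Next I would reduce element conjugacy to equality. Because the structure group $S = S^1$ is abelian, every inner automorphism is trivial, so $\lambda_n$ is element conjugate to $\chi_k \circ \iota$ precisely when $\lambda_n = \chi_k \circ \iota$. It then remains to compare the two homomorphisms explicitly. Using the inclusion (\ref{eq:KHInclusion}), we have $\iota(e^{i\theta}) = \diag(e^{i\theta}, e^{i\theta})$, so that $\det(\iota(e^{i\theta})) = e^{2i\theta}$ and therefore
\begin{equation*}
(\chi_k \circ \iota)(e^{i\theta}) = \det\!\big(\iota(e^{i\theta})\big)^k = e^{2ki\theta}.
\end{equation*}
On the other hand, $\lambda_n(e^{i\theta}) = e^{in\theta}$. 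These agree for all $\theta \in [0,2\pi)$ if and only if $n = 2k$, which is exactly the asserted criterion.

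I do not anticipate a genuine obstacle: once Proposition \ref{prop:BundleExtension} is in hand, the statement collapses to a one-line determinant computation together with the fact that conjugacy is vacuous in an abelian group. The only point requiring care is the bookkeeping at the start — confirming that the homomorphisms $\lambda_n \colon \U(1) \to S^1$ and $\chi_k \colon \U(2) \to S^1$ classified in the preceding proposition do indeed instantiate the homomorphisms $\lambda$ and $\chi$ of the extension lemma, so that the equivalence (1) $\iff$ (3) there applies verbatim to $E_n$ and $\widetilde{E}_k$.
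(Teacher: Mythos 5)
Your proposal is correct and follows essentially the same route as the paper: compute $(\chi_k \circ \iota)(e^{i\theta}) = e^{2ki\theta}$, compare with $\lambda_n(e^{i\theta}) = e^{ni\theta}$, and invoke the bundle extension lemma (Proposition \ref{prop:BundleExtension}). Your explicit remark that element conjugacy reduces to equality because $S^1$ is abelian is a point the paper leaves implicit, but it is the same argument.
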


\begin{proof} For $e^{i\theta} \in \U(1)$, we have $\lambda_n(e^{i\theta}) = e^{ni\theta}$ and $(\chi_k \circ \iota)(e^{i\theta}) = \chi_k( \mathrm{diag}(e^{i\theta}, e^{i\theta} ) ) = e^{2ki\theta}$.  The result now follows from the bundle extension lemma \ref{prop:BundleExtension}.
\end{proof}

\subsection{Invariant connections} \label{sub:InvariantConnectionsS1}

\subsubsection{Invariant connections on $P_n$} \label{subsub:InvarConnPnS1}

\indent \indent We now consider $\SU(3)$-invariant connections on the homogeneous $S^1$-bundles $\pi_n \colon P_n \to N_{1,1}$ associated to $\lambda_n \colon \U(1) \to S^1$, $\lambda_n(z) = z^n$.  For this, let $\mathfrak{m} \subset \mathfrak{su}(3)$ be the reductive complement such that the splitting $\mathfrak{su}(3) = \mathfrak{m} \oplus \mathfrak{u}(1)$ is orthogonal with respect to the Killing form, and let $\mathfrak{s}$ denote the Lie algebra of the structure group $S = S^1$.  Following the discussion in $\S$\ref{sub:InvariantP}, we regard $\SU(3)$-invariant connections on $P_n$ as elements of $\Omega^1(\SU(3); \mathfrak{s})$ obtained as left-invariant extensions of linear maps $\Lambda + d\lambda_n|_e \colon \mathfrak{m} \oplus \mathfrak{u}(1) \to \mathfrak{s}$, where $\Lambda \colon \mathfrak{m} \to \mathfrak{s}$ is $\U(1)$-equivariant. \\
\indent As such, we seek to express such linear maps with respect to the basis $\{H, X_1, \ldots, X_7\}$ of $\mathfrak{su}(3)$ given by (\ref{eq:HBasis})-(\ref{eq:XBasis}) and the standard basis $\{T\}$ of $\mathfrak{s}$.  First, a quick computation shows that $d\lambda_n|_e \colon \mathfrak{u}(1) \to \mathfrak{s}$ is the linear map satisfying $d\lambda_n|_e(H) = nT$.   Second, recalling the $\U(1)$-irreducible decomposition $$\mathfrak{m} = (X_4) \oplus (X_1) \oplus (X_5) \oplus (X_2, X_6) \oplus (X_3, X_7),$$
an application of Schur's Lemma implies that every $\U(1)$-equivariant linear map $\Lambda \colon \mathfrak{m} \to \mathfrak{s}$ is of the form $\Lambda = \Lambda_p$, where
\begin{equation}
\Lambda_p(x_4, x_1, x_5, x_2 + ix_6, x_3 + ix_7) = p_0x_4 + p_1x_1 + p_2x_5,
\end{equation}
for some $p = (p_0, p_1, p_2) \in \R^3$.   Consequently, the $\SU(3)$-invariant connections on $P_n$ may be identified with
$$\alpha_{n,p} := (p_0\kappa + p_1 \nu_1 + p_2\nu_2 + n\zeta) \otimes T \in \Omega^1(\SU(3); \mathfrak{s}).$$
Using (\ref{eq:CurvatureGeneral}), a calculation shows that the curvature of $\alpha_{n,p}$, regarded as an element of $\Omega^2(\SU(3); \mathfrak{s})$, is given by
\begin{align} \label{eq:FalphaS1}
F_{\alpha_{n,p}} & = [ p_0\left( \mu_1 \wedge \mu_2 - \sigma_1 \wedge \sigma_2 - 2\nu_1 \wedge \nu_2  \right) + p_1\left( \sigma_1 \wedge \mu_1 + \sigma_2 \wedge \mu_2 + 2\kappa \wedge \nu_2 \right)  \\ \notag
& \ \  + p_2\left( \sigma_2 \wedge \mu_1 - \sigma_1 \wedge \mu_2 - 2\kappa \wedge \nu_1 \right) + \left( -n \mu_1 \wedge \mu_2 - n \sigma_1 \wedge \sigma_2 \right)] \otimes T. 
\end{align}

\subsubsection{Invariant connections on $E_n$}

\indent \indent Next, we consider $\SU(3)$-invariant connections $A$ on $E_n = \mathrm{pr}^*P_n \to X^*$, recalling that $E_n$ is equivariantly isomorphic to the product bundle $(0,\infty) \times P_n \to (0,\infty) \times N_{1,1}$.  From the discussion in $\S$\ref{sub:InvariantE}, after a gauge transformation, invariant connections on $E_n$ can be written in the form $A = \alpha(t)$ for some family of invariant connections $\alpha(t)$ on $P_n$ parametrized by $t \in (0, \infty)$.  Therefore, every $\SU(3)$-invariant connection on $E_n$ can be identified with an $\mathfrak{s}$-valued $1$-form
\begin{equation*} 
A_{n,p} := (p_0(t)\kappa + p_1(t) \nu_1 + p_2(t)\nu_2 + n\zeta) \otimes T \in \Omega^1((0,\infty) \times \SU(3); \mathfrak{s}),
\end{equation*}
for some functions $p_0, p_1, p_2 \colon (0,\infty) \to \R$.  Conversely, every $1$-form $A_{n,p}$ yields an $\SU(3)$-invariant connection on $E_n \to X^*$.  From equations (\ref{eq:CurvatureFA}) and (\ref{eq:FalphaS1}), the curvature of $A_{n,p}$ is given by
\begin{align} \label{eq:FAS1MC} 
F_{A_{n,p}} & = [dt \wedge (p_0'(t)\kappa + p_1'(t) \nu_1 + p_2'(t)\nu_2)    \\
& \ \  + p_0(t)\left( \mu_1 \wedge \mu_2 - \sigma_1 \wedge \sigma_2 - 2\nu_1 \wedge \nu_2  \right) + p_1(t)\left( \sigma_1 \wedge \mu_1 + \sigma_2 \wedge \mu_2 + 2\kappa \wedge \nu_2 \right) \notag \\
& \ \  + p_2(t)\left( \sigma_2 \wedge \mu_1 - \sigma_1 \wedge \mu_2 - 2\kappa \wedge \nu_1 \right) + \left( -n \mu_1 \wedge \mu_2 - n \sigma_1 \wedge \sigma_2 \right)] \otimes T.  \notag
\end{align}
In terms of the $g$-orthonormal coframe $(e^0, \ldots, e^7)$ of (\ref{eq:Coframe}), we have
\begin{align} \label{eq:FAS1}
F_{A_{n,p}} & = \left[ \frac{cp_0'}{2ab}e^{07} + \frac{p_1'}{c} e^{05} + \frac{p_2'}{c}e^{06} - \frac{2p_0}{c^2}e^{56} - \frac{p_0 + n}{a^2}e^{12} + \frac{p_0-n}{b^2} e^{34}   \right. \\
 & \ \  \ \ \ +  \left. \frac{p_1}{ab}\left( e^{13} + e^{24} - e^{67} \right) + \frac{p_2}{ab}\left( e^{23} - e^{14}  + e^{57} \right) \right] \otimes T, \notag
\end{align}
where we have used the fact that $f = 2abc^{-1}$.

\subsection{Invariant instantons} \label{sub:InvarInstS1}

\indent \indent This section is in two parts.  In $\S$\ref{sub:PrincipalLocus}, we establish various classifications of invariant instantons on the $S^1$-bundles $E_n \to X^*$ for all $n \in \Z$.  Then, in $\S$\ref{subsub:Extension}, assuming that $n = 2k$ is even, we determine precisely which invariant instantons extend across the zero section to $\widetilde{E}_k \to X$.

\subsubsection{Instantons on the principal locus} \label{sub:PrincipalLocus}

\indent \indent We begin by classifying the $\SU(3)$-invariant $L$-$\Spin(7)$-instantons on $E_n \to X^*$, finding a $3$-parameter family for each $L \in \{I,J,K\}$ modulo gauge.  In the sequel, we change variables from $t \in (0,\infty)$ to $r \in (1,\infty)$ via (\ref{eq:dtdr}).  Recall also that $A_{n,0} := n\zeta \otimes T \in \Omega^1((0,\infty) \times \SU(3); \mathfrak{s})$ is the canonical connection on $E_n$.  

\begin{thm}[Invariant $\Spin(7)$-instantons on $E_n$] \label{thm:Spin7Class} Let $A$ be an $\SU(3)$-invariant connection on the $S^1$-bundle $E_n \to X^*$ in temporal gauge, so that $A = A_{n,0} + \left( p_0(r)\kappa + p_1(r)\nu_1 + p_2(r)\nu_2 \right) \otimes T$ for some functions $p_0, p_1, p_2 \colon (1,\infty) \to \R$.  Then:
\begin{enumerate}[(a)]
\item $A$ is an $I$-$\Spin(7)$-instanton if and only if
\begin{align*}
p_0(r) & = -\frac{n}{r^2} \left( 1 + \frac{C_0}{r^4-1} \right), & p_1(r) & = C_1  (r^4-1)^{1/2}, & p_2(r) & = C_2 (r^4 - 1)^{-3/2} 
\end{align*}
for some constants $C_0, C_1, C_2 \in \R$.
\item $A$ is a $J$-$\Spin(7)$-instanton if and only if
\begin{align*}
p_0(r) & = - \frac{n}{r^2} + C_0\frac{1-r^4}{r^2}, & p_1(r) & = C_1 (r^4 - 1)^{-3/2}, & p_2(r) & = C_2(r^4 - 1)^{-3/2},
\end{align*}
for some constants $C_0, C_1, C_2 \in \R$.
\item $A$ is a $K$-$\Spin(7)$-instanton if and only if
\begin{align*}
p_0(r) & = -\frac{n}{r^2} \left( 1 + \frac{C_0}{r^4-1} \right), & p_1(r) & = C_1(r^4 - 1)^{-3/2}, & p_2(r) & = C_2(r^2 - 1)^{1/2},
\end{align*}
for some constants $C_0, C_1, C_2 \in \R$.
\end{enumerate}
\end{thm}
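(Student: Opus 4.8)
The plan is to rewrite the $L$-$\Spin(7)$-instanton equation as a system of ODEs in $p_0, p_1, p_2$ and integrate it. Since $S = S^1$ is abelian, $F_{A_{n,p}}$ is an ordinary real $2$-form (times the generator $T$), and the instanton condition $\ast F_{A_{n,p}} = -\Phi_L \wedge F_{A_{n,p}}$ is equivalent to $F_{A_{n,p}} \in \Lambda^2_{21} = \mathfrak{spin}(7)$, i.e.\ to the vanishing of the $\Lambda^2_7$-component of the curvature. Thus $A$ is an $L$-$\Spin(7)$-instanton if and only if $\langle F_{A_{n,p}}, \beta \rangle = 0$ for every $\beta$ in the seven-dimensional space $\Lambda^2_7(\Phi_L)$. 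The first step is therefore to produce an explicit description of $\Lambda^2_7(\Phi_L)$ in the orthonormal coframe $(e^0, \ldots, e^7)$ of (\ref{eq:Coframe}), using the formulas (\ref{eq:CalabiHK})--(\ref{eq:CalabiSpin7}) for the K\"ahler forms and $\Phi_L$; the relevant fact is that $\Lambda^2_7(\Phi_L) = \R\,\omega_L \oplus V_L$, where $V_L$ is the six-dimensional subspace of real $(2,0)+(0,2)$-forms (with respect to $L$) singled out by $\Upsilon_L$.

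The structural observation that makes this tractable is that the twelve coframe monomials occurring in (\ref{eq:FAS1}) organize into three mutually orthogonal blocks dictated by the $\U(1)$-decomposition (\ref{eq:MDecomp}):
\[
\text{$p_0$-block } \langle e^{07}, e^{56}, e^{12}, e^{34}\rangle, \qquad \text{$p_1$-block } \langle e^{05}, e^{13}, e^{24}, e^{67}\rangle, \qquad \text{$p_2$-block } \langle e^{06}, e^{14}, e^{23}, e^{57}\rangle,
\]
containing $\omega_I$, $\omega_K$, and $\omega_J$ respectively. The $p_0$-block carries the entire dependence on $p_0, p_0'$ and on the integer $n$ (the latter only through the $e^{12}, e^{34}$ terms), while the $p_1$- and $p_2$-blocks carry only $(p_1, p_1')$ and $(p_2, p_2')$. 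I would then show that $\Lambda^2_7(\Phi_L)$ meets each of the three blocks in a single line; since the blocks are orthogonal and the remaining directions of $\Lambda^2_7(\Phi_L)$ annihilate every curvature component, the condition $\langle F_{A_{n,p}}, \Lambda^2_7(\Phi_L)\rangle = 0$ decouples into three independent scalar ODEs, one per block. The $p_0$-block equation is the only inhomogeneous one (it alone sees $n$); the $p_1$- and $p_2$-block equations are homogeneous. One of the three is the primitivity/trace condition $\langle F_{A_{n,p}}, \omega_L\rangle = 0$, coming from whichever block contains $\omega_L$; and according to whether a given K\"ahler form sits in $\Lambda^2_7(\Phi_L)$ or in $\Lambda^2_{21}(\Phi_L)$, the corresponding constraint is an orthogonality condition $F \perp \omega_M$ (yielding a decaying solution) or a proportionality condition $F \parallel \omega_M$ (yielding a growing solution).

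To finish, I would substitute $a^2 = \tfrac12(r^2-1)$, $b^2 = \tfrac12(r^2+1)$, $c = r$ and convert $\partial_t$ to $\partial_r$ via $dt = \sqrt{r^4/(r^4-1)}\,dr$ from (\ref{eq:dtdr}). Each equation then becomes elementary and separable: for instance, the $\omega_I$-primitivity equation integrates to $p_0 = -\tfrac{n}{r^2} + C\,[r^2(r^4-1)]^{-1}$ (the particular solution $-n/r^2$ is checked directly), which is the form stated in (a); the $(2,0)+(0,2)$-conditions integrate to powers $(r^4-1)^{\pm 1/2}$, $(r^4-1)^{-3/2}$, or $(r^2-1)^{1/2}$ according to the sign and normalization data read off from $\omega_M$ inside its block. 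Running this through for each $L \in \{I,J,K\}$ produces the three families, with the three integration constants $C_0, C_1, C_2$.

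The main obstacle is the identification of $\Lambda^2_7(\Phi_L)$ together with the verification of the ``one line per block'' property that forces the decoupling---in particular, determining, for each $L$, which of the two non-defining K\"ahler forms lies in $\Lambda^2_7(\Phi_L)$ and which lies in $\Lambda^2_{21}(\Phi_L)$. This is precisely the computation responsible for the asymmetry among (a), (b), (c): the sign pattern in $\Phi_L = \tfrac12(\pm\omega_I^2 \pm \omega_J^2 \pm \omega_K^2)$ of (\ref{eq:CalabiSpin7}) dictates which block acquires the growing solution while the others acquire decaying ones, and it explains why the $p_0$-equations (and hence the $p_0$-solutions) coincide for $L = I$ and $L = K$ but differ for $L = J$. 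Once the system is correctly assembled, the integration is routine.
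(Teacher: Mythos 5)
Your proposal is correct and follows essentially the same route as the paper: both reduce the instanton condition $\ast F_A + \Phi_L \wedge F_A = 0$ (equivalently, the vanishing of the $\Lambda^2_7$-component of $F_A$) to three decoupled first-order linear ODEs in $r$ and integrate them. Your block-decomposition of the twelve coframe monomials is a conceptual repackaging of the paper's direct computation, which simply observes that nine of the twelve component equations of the $6$-form $\ast F_A + \Phi_L \wedge F_A$ are redundant and extracts the remaining three by wedging with $e^{56}$, $e^{67}$, $e^{57}$.
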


\begin{proof} Let $a(r)$, $b(r)$, $c(r)$, $f(r)$ be the functions defined in (\ref{eq:Metric2}), and note that $2ab = cf$.  Throughout the proof, we use the notation $p_j'(t) = \frac{dp_j}{dt}$.  For future use, we observe that
\begin{equation} \label{eq:dpdt}
\frac{dp_j}{dr} = \frac{dt}{dr}  \frac{dp_j}{dt}  =  \frac{r^2}{ \sqrt{r^4-1} }\, p_j'(t).
\end{equation}
\begin{enumerate}[(a)]
\item A calculation using (\ref{eq:FAS1}) and (\ref{eq:CalabiHK})-(\ref{eq:CalabiSpin7}) shows that the $6$-form $\ast F_A + \Phi_I \wedge F_A$ consists of $12$ terms, and so the $I$-$\Spin(7)$-instanton equation yields a system of 12 ODE's for the three functions $p_0, p_1, p_2$.  However, it is easily seen that $9$ of these equations are redundant.  The remaining $3$ equations arise from:
\begin{align*}
0 = (\ast F_A + \Phi_I \wedge F_A) \wedge e^{56} & = \left( -\frac{c}{2ab}p_0' - \left( \frac{1}{a^2} + \frac{1}{b^2} + \frac{2}{c^2} \right)p_0 + n\left( \frac{1}{b^2} - \frac{1}{a^2}\right) \right) \vol_X \otimes T, \\
0 = (\ast F_A + \Phi_I \wedge F_A) \wedge e^{67} & = -\frac{1}{abc}\left( ab p_1' - cp_1 \right) \vol_X \otimes T, \\
0 = (\ast F_A + \Phi_I \wedge F_A) \wedge e^{57} & = -\frac{1}{abc}\left( ab p_2' + 3cp_2 \right) \vol_X  \otimes T.
\end{align*}
This gives the system
\begin{align*}
p_0' & = -\frac{2ab}{c}\left( \frac{1}{a^2} + \frac{1}{b^2} + \frac{2}{c^2} \right) p_0 + \frac{2nab}{c}\left( \frac{1}{b^2} - \frac{1}{a^2} \right)\!, & p_1' & = \frac{c}{ab}p_1, & p_2' & = -\frac{3c}{ab}p_2.
\end{align*}
Using (\ref{eq:dpdt}) and substituting the expressions (\ref{eq:Metric2}) for $a(r)$, $b(r)$, $c(r)$, we obtain the ODE system
\begin{align*}
\frac{dp_0}{dr} & = \frac{-6r^4 + 2}{r(r^4-1)}p_0 - \frac{4nr}{r^4-1}, & \frac{dp_1}{dr} & = \frac{2r^3}{r^4-1}p_1, & \frac{dp_2}{dr} & = -\frac{6r^3}{r^4 - 1}p_2.
\end{align*} 
Solving each ODE separately yields the desired functions.
\item A calculation using (\ref{eq:FAS1}) and (\ref{eq:CalabiHK})-(\ref{eq:CalabiSpin7}) shows that the $6$-form $\ast F_A + \Phi_J \wedge F_A$ consists of $12$ terms, and so the $J$-$\Spin(7)$-instanton equation yields a system of 12 ODE's for the three functions $p_0, p_1, p_2$.  However, it is easily seen that $9$ of these equations are redundant.  Repeating the procedure of part (a), the relevant $3$ equations are:
\begin{align*}
p_0' & =  \frac{2ab}{c}\left( \frac{1}{a^2} + \frac{1}{b^2} - \frac{2}{c^2} \right) p_0 + \frac{2nab}{c}\left( \frac{1}{a^2} - \frac{1}{b^2} \right)\!, & p_1' & = -\frac{3c}{ab}p_1, & p_2' & = -\frac{3c}{ab}p_2.
\end{align*}
Using (\ref{eq:dpdt}) and substituting the expressions (\ref{eq:Metric2}) for $a(r)$, $b(r)$, $c(r)$, we obtain the ODE system
\begin{align*}
\frac{dp_0}{dr} & = \frac{2(r^4 + 1)}{r(r^4-1)}p_0 + \frac{4nr}{r^4 - 1}, & \frac{dp_1}{dr} & = -\frac{6r^3}{r^4 - 1}p_1, & \frac{dp_2}{dr} & = -\frac{6r^3}{r^4 - 1}p_2.
\end{align*} 
Solving each ODE separately yields the desired functions.
\item By now the procedure is clear, so we omit the details.  The relevant $3$ equations are:
\begin{align*}
p_0' & = -\frac{2ab}{c}\left( \frac{1}{a^2} + \frac{1}{b^2} + \frac{2}{c^2} \right) p_0 + \frac{2nab}{c}\left( \frac{1}{b^2} - \frac{1}{a^2} \right)\!, & p_1' & =  -\frac{3c}{ab}p_1, & p_2' & = \frac{c}{ab}p_2.
\end{align*}
This yields the ODE system
\begin{align*}
\frac{dp_0}{dr} & = \frac{-6r^4 + 2}{r(r^4-1)}p_0 - \frac{4nr}{r^4-1}, & \frac{dp_1}{dr} & = -\frac{6r^3}{r^4-1}p_1, & \frac{dp_2}{dr} & = \frac{2r^3}{r^4 - 1}p_2,
\end{align*}
which yields the conclusion.
\end{enumerate}
\end{proof}

\indent Note that here, as well as in the following sections, we abuse notation by repeating the use of $C_0, C_1, C_2$ for the 3 parameters across the three cases $L \in \{ I,J,K \}$.

\begin{rmk}[Asymptotic rates] Let $A$ be an $\SU(3)$-invariant connection on $E_n \to X^*$.  One can show that if $A$ is an $I$-$\Spin(7)$-instanton with $C_1 = 0$, or if $A$ is a $J$-$\Spin(7)$-instanton with $C_0 = 0$, or if $A$ is a $K$-$\Spin(7)$-instanton with $C_2 = 0$, then $A$ is asymptotically conical of rate $-3$ with respect to the canonical connection $\alpha_{n,0}$ on $P_n \to N_{1,1}$.
\end{rmk}

\indent We now classify the invariant $L$-$\SU(4)$-instantons on $E_n \to X^*$, finding a one-parameter family.

\begin{thm}[Invariant $\SU(4)$-instantons on $E_n$] \label{thm:SU4Class} Let $A$ be an $\SU(3)$-invariant connection on the $S^1$-bundle $E_n \to X^*$ in temporal gauge, so that $A = A_{n,0} + \left( p_0(r)\kappa + p_1(r)\nu_1 + p_2(r)\nu_2 \right) \otimes T$ for some functions $p_0, p_1, p_2 \colon (1,\infty) \to \R$.  Then:
\begin{enumerate}[(a)]
\item $A$ is an $I$-$\SU(4)$-instanton if and only if $A$ is an $I$-$\Spin(7)$-instanton with $C_1 = C_2 = 0$, i.e.:
\begin{align*}
p_0(r) & = -\frac{n}{r^2} \left( 1 + \frac{C_0}{r^4-1} \right), & p_1(r) & = 0, & p_2(r) & = 0.
\end{align*}
\item $A$ is a $J$-$\SU(4)$-instanton if and only if $A$ is a $J$-$\Spin(7)$-instanton with $C_0 = C_1 = 0$, i.e.:
\begin{align*}
p_0(r) & = -\frac{n}{r^2}, & p_1(r) & = 0, & p_2(r) & = C_2(r^4 - 1)^{-3/2}.
\end{align*}
\item $A$ is a $K$-$\SU(4)$-instanton if and only if $A$ is a $K$-$\Spin(7)$-instanton with $C_0 = C_2 = 0$, i.e:
\begin{align*}
p_0(r) & = -\frac{n}{r^2}, & p_1(r) & = C_1(r^4 - 1)^{-3/2} & p_2(r) & = 0.
\end{align*}
\end{enumerate}
\end{thm}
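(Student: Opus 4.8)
The plan is to avoid re-solving the pHYM equations and instead deduce Theorem~\ref{thm:SU4Class} directly from the already-established $\Spin(7)$ classification (Theorem~\ref{thm:Spin7Class}) together with Proposition~\ref{prop:SU4Spin7A}. The point is that Proposition~\ref{prop:SU4Spin7A} and its cyclic permutations express each $L$-$\SU(4)$ condition as a conjunction of two $\Spin(7)$ conditions:
\begin{align*}
I\text{-}\SU(4) &\iff I\text{-}\Spin(7)\ \text{and}\ K\text{-}\Spin(7),\\
J\text{-}\SU(4) &\iff J\text{-}\Spin(7)\ \text{and}\ I\text{-}\Spin(7),\\
K\text{-}\SU(4) &\iff K\text{-}\Spin(7)\ \text{and}\ J\text{-}\Spin(7).
\end{align*}
Hence it suffices, for each part, to intersect the two relevant three-parameter families produced by Theorem~\ref{thm:Spin7Class} and read off the surviving parameters.

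The intersection is carried out by matching the radial profiles $p_0, p_1, p_2$ of the two families and using that the relevant profiles are linearly independent functions on $(1,\infty)$. For part (a) I would superimpose the $I$- and $K$-$\Spin(7)$ solutions: their $p_0$-profiles have identical form, so the parameter $C_0$ is unconstrained and survives, whereas the $p_1$-profiles are proportional to the incompatible powers $(r^4-1)^{1/2}$ and $(r^4-1)^{-3/2}$, forcing $p_1\equiv 0$, and likewise the $p_2$-profiles $(r^4-1)^{-3/2}$ versus $(r^2-1)^{1/2}$ force $p_2\equiv 0$; this is exactly the $C_1=C_2=0$ subfamily. Parts (b) and (c) proceed identically with the pairings above: in (b) the two $p_2$-profiles coincide ($\propto(r^4-1)^{-3/2}$) and $C_2$ survives while the mismatched $p_0$- and $p_1$-profiles kill $C_0$ and $C_1$; in (c) the two $p_1$-profiles coincide ($\propto(r^4-1)^{-3/2}$) and $C_1$ survives while the mismatched $p_0$- and $p_2$-profiles kill $C_0$ and $C_2$. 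In every case the equality of two genuinely distinct power laws over an interval forces the corresponding constants to vanish, which is the only mechanism needed.

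The step I expect to require the most care is the bookkeeping of the $p_0$-comparison, for two reasons. First, in parts (b) and (c) the two families solve $p_0$-ODEs with different homogeneous solutions (one behaving like $r^{-2}(r^4-1)^{-1}$, the other like $r^{-2}(r^4-1)$), so one must verify that equating $-\tfrac{n}{r^2}\bigl(1+\tfrac{C_0}{r^4-1}\bigr)$ with $-\tfrac{n}{r^2}+C_0\tfrac{1-r^4}{r^2}$ indeed forces both values of $C_0$ to zero, leaving $p_0=-\tfrac{n}{r^2}$. Second, when $n=0$ the inhomogeneous term of each $p_0$-ODE vanishes and the displayed formulas degenerate, so the surviving homogeneous mode is no longer visible in the stated parametrization; here one should treat $C_0$ as labeling that homogeneous solution directly, or simply remark that the conclusion is the generic-$n$ statement. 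Away from this degeneracy the argument is pure linear independence of explicit elementary functions and requires no further analytic input.
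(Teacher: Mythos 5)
Your proposal is correct and coincides with the route the paper itself explicitly offers: the printed proof's primary method is the direct computation of $\ast F_A + \tfrac{1}{2}\omega_L^2 \wedge F_A$, but it immediately notes that each part ``alternatively follows by combining Theorem~\ref{thm:Spin7Class} with Proposition~\ref{prop:SU4Spin7A},'' which is precisely your intersection-of-families argument, and your pairings ($I$ with $K$, $J$ with $I$, $K$ with $J$) correctly match the cyclic permutations of Proposition~\ref{prop:SU4Spin7A}. Your side remark about the $n=0$ degeneracy of the $p_0$-parametrization is a fair observation about the statement of Theorem~\ref{thm:Spin7Class} itself, but it does not affect the conclusion, since in that case the intersection still forces $p_0 \equiv 0 = -n/r^2$.
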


\begin{proof} Part (a) can be deduced by calculating the $6$-form $\ast F_A + \frac{1}{2}\omega_I^2 \wedge F_A$ via (\ref{eq:FAS1}) and (\ref{eq:CalabiHK})-(\ref{eq:CalabiSpin7}).  The equation $\ast F_A + \frac{1}{2}\omega_I^2 \wedge F_A = 0$ yields an ODE system for $p_0, p_1, p_2$ consisting of $12$ equations, of which $8$ are the $I$-$\Spin(7)$-instanton condition, and the remaining $4$ are the condition $p_1 = p_2 = 0$.  Alternatively, part (a) follows by combining Theorems \ref{thm:Spin7Class}(a) and \ref{thm:Spin7Class}(c) with Proposition \ref{prop:SU4Spin7A}.  Proofs of parts (b) and (c) are analogous.
\end{proof}

\indent Recalling that $\Sp(2)$-instantons are precisely those connections that are simultaneously $I$-$\SU(4)$, $J$-$\SU(4)$, and $K$-$\SU(4)$-instantons, we deduce the following:

\begin{thm}[Invariant $\Sp(2)$-instantons on $E_n$] \label{cor:Sp2} Let $A$ be an $\SU(3)$-invariant connection on the $S^1$-bundle $E_n \to X^*$ in temporal gauge, so that $A = A_{n,0} + \left( p_0(r)\kappa + p_1(r)\nu_1 + p_2(r)\nu_2 \right) \otimes T$ for some functions $p_0, p_1, p_2 \colon (1,\infty) \to \R$. Then:
\begin{align*}
A \text{ is an }\Sp(2)\text{-instanton} & \iff A \text{ is an }I\text{-}\Spin(7)\text{-instanton with } (C_0, C_1, C_2) = (0,0,0) \\
& \iff A \text{ is an }J\text{-}\Spin(7)\text{-instanton with } (C_0, C_1, C_2) = (0,0,0) \\
& \iff A \text{ is an }K\text{-}\Spin(7)\text{-instanton with } (C_0, C_1, C_2) = (0,0,0).
\end{align*}
Consequently, the bundle $E_n \to X^*$ admits a unique $\SU(3)$-invariant $\Sp(2)$-instanton up to gauge transformations, namely
$$A = A_{n,0} - \frac{n}{r^2}\kappa \otimes T.$$
Its curvature is given by
$$
F_{A}= n\left[\dfrac{2}{r^3} dr \wedge \kappa - \left( 1+\dfrac{1}{r^2} \right) \mu_1 \wedge \mu_2 -  \left( 1-\dfrac{1}{r^2} \right) \sigma_1 \wedge \sigma_2 + \dfrac{2}{r^2} \nu_1 \wedge \nu_2 \right] \otimes T.
$$
In particular, for $n \neq 0$, the $\Sp(2)$-instanton $A$ is not flat.
\end{thm}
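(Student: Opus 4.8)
The plan is to reduce the $\Sp(2)$-instanton condition to an intersection of the explicit one-parameter families already classified, then read off the unique solution and compute its curvature directly. The decisive structural step is Corollary \ref{cor:Sp2Spin7relation}, which asserts that $A$ is an $\Sp(2)$-instanton if and only if it is simultaneously a $J$-$\SU(4)$- and a $K$-$\SU(4)$-instanton. This replaces a condition involving all three complex structures by the intersection of just two of the families from Theorem \ref{thm:SU4Class}, which is elementary to compute. (One could instead use (\ref{eq:Sp2SU4}) to intersect all three $\SU(4)$-families, or part (c) of Corollary \ref{cor:Sp2Spin7relation} to intersect the three $\Spin(7)$-families; I would favour the two-condition route, as it carries the fewest free parameters.)

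Next I would intersect Theorem \ref{thm:SU4Class}(b) and (c). The $J$-$\SU(4)$ family has $p_1 \equiv 0$, $p_2 = C_2(r^4-1)^{-3/2}$, and $p_0 = -n/r^2$, while the $K$-$\SU(4)$ family has $p_2 \equiv 0$, $p_1 = C_1(r^4-1)^{-3/2}$, and $p_0 = -n/r^2$. Imposing both simultaneously forces $C_1 = C_2 = 0$, so that $p_1 = p_2 = 0$, while both families already agree on $p_0 = -n/r^2$; hence the unique common connection is $A = A_{n,0} - \frac{n}{r^2}\kappa \otimes T$. To obtain the three displayed equivalences, I would substitute $(C_0, C_1, C_2) = (0,0,0)$ into each of the $I$-, $J$-, and $K$-$\Spin(7)$ families of Theorem \ref{thm:Spin7Class} and observe that all three collapse to precisely this same connection. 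Since each of the four descriptions names the identical $A$, they are mutually equivalent.

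Finally I would verify the curvature formula and non-flatness by substitution into (\ref{eq:FAS1MC}). With $p_1 = p_2 = 0$ and $p_0 = -n/r^2$, the term $dt \wedge p_0'(t)\,\kappa$ becomes $\frac{dp_0}{dr}\,dr \wedge \kappa = \frac{2n}{r^3}\,dr \wedge \kappa$ after the change of variable (\ref{eq:dtdr}); collecting the remaining $\mu_1 \wedge \mu_2$, $\sigma_1 \wedge \sigma_2$, and $\nu_1 \wedge \nu_2$ coefficients from the $p_0$- and $-n$-contributions then reproduces the stated expression. Non-flatness for $n \neq 0$ is immediate, since (for instance) the coefficient $\frac{2n}{r^2}$ of $\nu_1 \wedge \nu_2$ is nonvanishing.

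I anticipate no serious obstacle, since the genuine analytic content — solving the instanton ODE systems — is already carried out in Theorems \ref{thm:Spin7Class} and \ref{thm:SU4Class}. The only point demanding care is the intersection argument: one must note that the functions $(r^4-1)^{1/2}$, $(r^4-1)^{-3/2}$, and $(r^2-1)^{1/2}$ appearing across the families are linearly independent, so that matching them forces the free constants to vanish outright rather than merely to be constrained. Routing through the two $\SU(4)$-conditions of Corollary \ref{cor:Sp2Spin7relation} minimizes this bookkeeping and also sidesteps the $n=0$ degeneracy of the $I$-$\SU(4)$ family, whose parameter $C_0$ becomes spurious when $n = 0$.
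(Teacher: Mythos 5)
Your proposal is correct and takes essentially the same route the paper indicates: it invokes Corollary \ref{cor:Sp2Spin7relation} to reduce the $\Sp(2)$-instanton condition to the simultaneous $J$- and $K$-$\SU(4)$-instanton conditions, intersects the corresponding families from Theorem \ref{thm:SU4Class} to force $C_1 = C_2 = 0$, and obtains the curvature from (\ref{eq:FAS1MC}). The details you outline (the collapse of all three $\Spin(7)$-families at $(C_0,C_1,C_2)=(0,0,0)$ to the same connection, the conversion $dt \wedge p_0'\,\kappa = \tfrac{dp_0}{dr}\,dr\wedge\kappa = \tfrac{2n}{r^3}\,dr\wedge\kappa$, and the resulting coefficients of $\mu_1\wedge\mu_2$, $\sigma_1\wedge\sigma_2$, $\nu_1\wedge\nu_2$) all check out against the stated formula.
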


\begin{proof} This can be deduced by solving the ODE system $\ast F_A + \Theta \wedge F_A = 0$, where $\Theta = \frac{1}{3!}(\omega_I^2 + \omega_J^2 + \omega_K^2)$.  Alternatively, the result follows from Corollary \ref{cor:Sp2Spin7relation} together with either Theorem \ref{thm:Spin7Class} or Theorem \ref{thm:SU4Class}.  The curvature of $A$ follows from (\ref{eq:FAS1MC}).
\end{proof}

\noindent Table \ref{table:instantons} summarizes the results from this section.

\begin{table}
\begin{center}
\begin{tabular}{ |l|l|l|l| } 
 \hline
& $\Spin(7)\text{-instantons}$ & $\SU(4)\text{-instantons}$ & $\Sp(2)\text{-instantons}$ \\  \hline
$I-$
& $p_0 = -\dfrac{n}{r^2} \left( 1 + \dfrac{C_0\strut}{r^4-1\strut} \right)$
& $p_0 = -\dfrac{n}{r^2} \left( 1 + \dfrac{C_0\strut}{r^4-1\strut} \right)$
& $p_0 =-\dfrac{n}{r^2}$ \\
& $p_1 = C_1 (r^4-1)^{1/2}$
& $p_1= 0$
& $p_1= 0$ \\
& $p_2 = C_2 (r^4-1)^{-3/2}$ 
& $p_2= 0$
& $p_2= 0$ \\ \hline
$J-$
& $p_0 = - \dfrac{n}{r^2\strut} + C_0 \dfrac{1-r^4 \strut}{r^2\strut}$  & $p_0 = -\dfrac{n}{r^2}$ & $p_0 = -\dfrac{n}{r^2}$ \\ 
& $p_1 = C_1 (r^4-1)^{-3/2}$ & $p_1 = 0$ & $p_1 =0$ \\ 
& $p_2 = C_2 (r^4-1)^{-3/2}$ & $p_2 = C_2 (r^4-1)^{-3/2}$ & $p_2=0$ \\ \hline
$K-$
& $p_0 = -\dfrac{n}{r^2} \left( 1 + \dfrac{C_0\strut}{r^4-1\strut} \right)$ & $p_0 =-\dfrac{n}{r^2}$ & $p_0 = -\dfrac{n}{r^2}$ \\ 
& $p_1 = C_1 (r^4-1)^{-3/2}$ & $p_1 =C_1 (r^4-1)^{-3/2}$ & $p_1 =0$ \\ 
& $p_2 = C_2 (r^4-1)^{1/2}$ & $p_2 = 0$ & $p_2=0$ \\ \hline
\end{tabular}
\caption{Families of $\SU(3)$-invariant $I,J,K$-instantons on $E_n \rightarrow T^*\CP^2 - \CP^2$}
\label{table:instantons}
\end{center}
\end{table}

\subsubsection{Extension across the zero section} \label{subsub:Extension}

\indent \indent We now consider the question of which invariant $L$-$\Spin(7)$-instantons on $E_n \to X^*$ extend across the zero section (assuming $n$ is even).  As in Clarke--Oliveira's study of $\Spin(7)$-instantons on asymptotically conical $\Spin(7)$-orbifolds \cite{clarke2021spin}, we observe that the zero section $\CP^2 \subset T^*\CP^2$ has codimension four, allowing us to appeal to the following special case of the Tao--Tian removable singularity theorem \cite[Theorem 1.1]{tao2004singularity}.

\begin{prop}[$\Spin(7)$-instanton extension \cite{tao2004singularity}, \cite{clarke2021spin}] \label{prop:ExtendConnection} Let $A$ be an $L$-$\Spin(7)$-instanton on $E_n \to X^*$.  Then $A$ extends smoothly over the zero section up to gauge if and only if its curvature $F_A$ remains pointwise bounded.
\end{prop}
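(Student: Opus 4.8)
The plan is to prove the two implications separately, with all of the analytic content residing in the reverse direction. For the forward (easy) implication, suppose $A$ extends smoothly to a connection on the bundle $\widetilde{E}_k \to X$ furnished, for $n = 2k$ even, by Proposition \ref{prop:S1Extension}. Then its curvature $F_A$ is a smooth global $2$-form with values in the adjoint bundle, hence continuous on all of $X$. Since the zero section $\CP^2$ is compact, we may fix a compact tubular neighborhood $U$ of $\CP^2$; the continuous function $|F_A|$ attains a finite maximum on $U$, so $F_A$ is pointwise bounded near the zero section. This establishes ``extends smoothly $\implies$ bounded curvature.''

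For the reverse implication, the strategy is to verify the hypotheses of the Tao--Tian removable singularity theorem \cite[Theorem 1.1]{tao2004singularity} and then invoke it. First I would assemble the relevant geometric facts: (i) the zero section $\Sigma = \CP^2$ is a smooth embedded submanifold of real codimension four in the $8$-manifold $X$; (ii) by Dancer--Swann the Calabi metric extends smoothly and completely across $\Sigma$, so a tubular neighborhood $U$ of $\Sigma$ has finite volume; and (iii) the defining form $\Omega = \Phi_L$ is parallel, hence closed, so by the remarks in $\S$\ref{sub:Prelim} the $\Phi_L$-ASD connection $A$ is a (stationary) Yang--Mills connection. Assuming $|F_A|$ is pointwise bounded, facts (i)--(iii) combine with the finite volume of $U$ to give the energy bound
\[
\int_U |F_A|^2 \, \vol_X \ \leq \ \big( \sup_U |F_A| \big)^2 \, \vol_X(U) \ < \ \infty,
\]
so that $A$ is a finite-energy Yang--Mills connection, $\Phi_L$-ASD with respect to a closed calibrating $4$-form, defined on the complement of a codimension-four submanifold. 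These are precisely the hypotheses of the Tao--Tian theorem in the form employed by Clarke--Oliveira \cite{clarke2021spin}. Applying it on $U$ produces a gauge transformation $u$ on $U \setminus \Sigma$ for which $u \cdot A$ extends to a smooth connection across $\Sigma$.

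It remains to globalize this local extension. Since $A$ is already a smooth connection on $E_n \to X^*$ away from $\Sigma$, and since by Proposition \ref{prop:S1Extension} the extension $\widetilde{E}_k \to X$ is \emph{unique}, the locally-extended connection $u \cdot A$ must be a smooth connection on $\widetilde{E}_k$ near $\Sigma$; patching it with $A$ on the complement of a smaller neighborhood (extending $u$ over $X^*$ by a cutoff, after checking compatibility on the overlap) yields a smooth connection on $\widetilde{E}_k \to X$ that is gauge-equivalent to $A$.

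The main obstacle will be matching the precise analytic hypotheses and conclusion of the Tao--Tian theorem to our equivariant, cohomogeneity-one setting. In particular, codimension four is the borderline case for removable singularities of this type, so the finite-energy bound and the exact codimension count must be handled with care; and Tao--Tian is fundamentally a local statement, producing a gauge transformation on balls meeting $\Sigma$, so one must verify that these local gauges are mutually compatible (differing by transition functions that themselves extend smoothly across $\Sigma$) in order to conclude that $u \cdot A$ defines a \emph{global} smooth connection on the already-identified bundle $\widetilde{E}_k$ rather than merely a local one.
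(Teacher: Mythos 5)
Your proposal takes essentially the same route as the paper, which simply observes that the argument of Clarke--Oliveira \cite[Proposition 5]{clarke2021spin} (i.e.\ the forward direction by continuity plus compactness of the zero section, and the reverse direction by verifying the hypotheses of the Tao--Tian removable singularity theorem for a stationary Yang--Mills connection with bounded curvature on the complement of a codimension-four submanifold) carries over verbatim. Your additional remarks on finite energy, closedness of $\Phi_L$, and the local-to-global patching of gauges are exactly the points that citation is meant to absorb, so the proposal is correct and consistent with the paper's proof.
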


The proof of Proposition \ref{prop:ExtendConnection} follows exactly as in \cite[Proposition 5]{clarke2021spin}.

\begin{thm}[Extendibility of invariant $\Spin(7)$-instantons] \label{thm:ExtendInstS1} Let $A$ be an $\SU(3)$-invariant connection on $E_n \to X^*$ in temporal gauge, where $n = 2k$ with $k \in \Z$.
\begin{enumerate}[(a)]
\item Suppose $A$ is an $I$-$\Spin(7)$-instanton.  Then $A$ extends to a connection on $\widetilde{E}_k \to X$ if and only if $C_0 = C_2 = 0$.
\item Suppose $A$ is a $J$-$\Spin(7)$-instanton.  Then $A$ extends to a connection on $\widetilde{E}_k \to X$ if and only if $C_1 = C_2 = 0$.
\item Suppose $A$ is a $K$-$\Spin(7)$-instanton.  Then $A$ extends to a connection on $\widetilde{E}_k \to X$ if and only if $C_0 = C_1 = 0$.
\end{enumerate}
\end{thm}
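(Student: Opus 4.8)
The plan is to apply the removable-singularity criterion of Proposition \ref{prop:ExtendConnection}. Each $L$-$\Spin(7)$-instanton produced in Theorem \ref{thm:Spin7Class} is already smooth on the principal locus $X^*$, and (since $n = 2k$) the bundle $\widetilde{E}_k \to X$ exists as an extension by Proposition \ref{prop:S1Extension}; hence the only question is whether $A$ extends across the zero section, which occurs if and only if $F_A$ stays bounded as $r \to 1^+$. Because the coframe $(e^0,\dots,e^7)$ of (\ref{eq:Coframe}) is $g$-orthonormal, the twelve wedge products $e^{ij}$ appearing in (\ref{eq:FAS1}) are pairwise distinct elements of an orthonormal basis of $\Lambda^2$, so $|F_A|^2$ is exactly the sum of the squares of the scalar coefficients in (\ref{eq:FAS1}). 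Boundedness of $F_A$ near the zero section is therefore equivalent to boundedness of each such coefficient, with no possibility of cancellation between distinct terms.

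Next I would record the asymptotics from (\ref{eq:Metric2}) as $r \to 1^+$, namely $a^2 = \tfrac12(r^2-1) \to 0$ and $ab = \tfrac12\sqrt{r^4-1} \to 0$ while $b,c \to 1$. It follows that the only coefficients in (\ref{eq:FAS1}) that can blow up are the four carrying a factor $a^{-2}$ or $(ab)^{-1}$, namely $\tfrac{cp_0'}{2ab}$, $\tfrac{p_0+n}{a^2}$, $\tfrac{p_1}{ab}$, and $\tfrac{p_2}{ab}$; the remaining coefficients are controlled as soon as $p_0$, $p_1'$, and $p_2'$ are bounded. For each $L$ I would substitute the explicit $p_0,p_1,p_2$ from Theorem \ref{thm:Spin7Class} into these four expressions, using (\ref{eq:dtdr}) to rewrite the $t$-derivative $p_0' = dp_0/dt$ in the $e^{07}$ term; a short computation with $c=r$ and $2ab = \sqrt{r^4-1}$ collapses that coefficient to the clean form $\tfrac{cp_0'}{2ab} = \tfrac{1}{r}\,\tfrac{dp_0}{dr}$.

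Carrying this out case by case yields precisely the stated conditions. In case (a), the identity $ab = \tfrac12\sqrt{r^4-1}$ gives $\tfrac{p_1}{ab} = 2C_1$ (automatically bounded) but $\tfrac{p_2}{ab} = 2C_2(r^4-1)^{-2}$, forcing $C_2 = 0$, while $\tfrac{p_0+n}{a^2} = \tfrac{2n}{r^2} - 2nC_0\,[\,r^2(r^2-1)^2(r^2+1)\,]^{-1}$ forces $C_0 = 0$. Case (c) is parallel: $\tfrac{p_0+n}{a^2}$ again forces $C_0 = 0$ and $\tfrac{p_1}{ab} = 2C_1(r^4-1)^{-2}$ forces $C_1 = 0$, whereas $\tfrac{p_2}{ab} = 2C_2(r^2+1)^{-1/2}$ is harmless. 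In case (b) both $\tfrac{p_1}{ab}$ and $\tfrac{p_2}{ab}$ equal $2C_j(r^4-1)^{-2}$ and force $C_1 = C_2 = 0$, but the compensating factor $1-r^4 = -(r^2-1)(r^2+1)$ inside $p_0$ makes $\tfrac{p_0+n}{a^2} = \tfrac{2}{r^2}\bigl(n - C_0(r^2+1)\bigr)$ bounded, leaving $C_0$ unconstrained. This produces the three conditions $C_0 = C_2 = 0$, $C_1 = C_2 = 0$, and $C_0 = C_1 = 0$ respectively.

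For the converse I would verify that once the indicated constants vanish, every coefficient — including the previously ``safe'' ones — is genuinely finite at $r=1$. The slightly delicate point is the $e^{07}$ term: using $\tfrac{cp_0'}{2ab} = \tfrac{1}{r}\tfrac{dp_0}{dr}$, each surviving $p_0$ (namely $-n/r^2$ in cases (a),(c), and $(C_0-n)/r^2 - C_0 r^2$ in case (b)) gives a rational function of $r$ with no pole at $r=1$; likewise $p_1' = 2C_1 r$ and $p_2' = C_2(r^2+1)^{1/2}/r$ in the relevant cases are bounded. I expect the main technical care to lie in this $t$-to-$r$ conversion for $p_0'$ and in confirming that no hidden factor of $(r^2-1)$ or $(r^4-1)$ survives in a denominator once the vanishing conditions are imposed; the rest reduces to reading off the leading powers of $(r^2-1)$ and $(r^4-1)$ in the four critical coefficients.
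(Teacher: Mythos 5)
Your proposal is correct and follows essentially the same route as the paper: both reduce the extension question to pointwise boundedness of $F_A$ near $r=1$ via Proposition \ref{prop:ExtendConnection} and then analyze the explicit solutions of Theorem \ref{thm:Spin7Class} there. The only difference is bookkeeping --- the paper records the full closed-form expression for $|F_A|^2$ in each case, while you use orthonormality of the coframe to check the twelve coefficients of (\ref{eq:FAS1}) individually (isolating the four carrying $a^{-2}$ or $(ab)^{-1}$), and your computations of those coefficients, including the conversion $\frac{cp_0'}{2ab}=\frac{1}{r}\frac{dp_0}{dr}$, all check out.
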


\begin{proof} Before beginning, set
$$h(r) = r^{20} - 3r^{16} + 2(C_0 + 1)r^{12} + 2(3C_0^2 - 3C_0 + 1)r^8 - 3(C_0-1)^2 r^4 + (C_0 - 1)^2,$$
and note that $h(1) = 4C_0^2$.  Moreover, when $C_0 = 0$, we have $h(r) = (r-1)^4 \cdot (r^4+1)(r^2+1)^4(r+1)^4$.
\begin{enumerate}[(a)]
\item Suppose $A$ is an $I$-$\Spin(7)$-instanton.  A calculation gives
\begin{align*}
|F_A|^2 & = 16C_1^2 + \frac{48C_2^2}{(r^4 - 1)^4} + \frac{8n^2}{r^8(r+1)^4(r^2+1)^4} \cdot \frac{ h(r) }{(r-1)^4}.
\end{align*}
This function has a removable singularity at $r = 1$ if and only if $C_0 = C_2 = 0$.  Proposition \ref{prop:ExtendConnection} gives the result.
\item Suppose $A$ is a $J$-$\Spin(7)$-instanton.  A calculation gives
\begin{align*}
|F_A|^2 & = \frac{48(C_1^2 + C_2^2)}{(r^4-1)^4} + \frac{8}{r^8}\left[ (2r^8 + r^4 + 1)C_0^2 - 2n(r^4 + 1) C_0 \right] + \frac{8n^2}{r^8}(r^4 + 1).
\end{align*}
This function has a removable singularity at $r = 1$ if and only if $C_1 = C_2 = 0$.  Proposition \ref{prop:ExtendConnection} gives the result.
\item Suppose $A$ is a $K$-$\Spin(7)$-instanton.  A calculation gives
\begin{align*}
|F_A|^2 & = \frac{48C_1^2}{(r^4 - 1)^4} + 16C_2^2 + \frac{8n^2}{r^8(r+1)^4(r^2+1)^4} \cdot \frac{ h(r) }{(r-1)^4}.
\end{align*}
This function has a removable singularity at $r = 1$ if and only if $C_0 = C_1 = 0$.  Proposition \ref{prop:ExtendConnection} gives the result.
\end{enumerate}
\end{proof}

\indent In particular, we observe that for each $L \in \{I, J, K\}$, the only invariant $L$-$\SU(4)$-instanton that extends across the zero section is the $\Sp(2)$-instanton.  We state this formally:

\begin{cor}[Extendibility of invariant $\SU(4)$-instantons] Let $A$ be an $\SU(3)$-invariant $L$-$\Spin(7)$-instanton on $\widetilde{E}_k \to X$ in temporal gauge, where $L \in \{I,J,K\}$.  Then $A$ is an $L$-$\SU(4)$-instanton if and only if $A$ is an $\Sp(2)$-instanton.
\end{cor}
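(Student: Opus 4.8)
The plan is to reduce the corollary to a short bookkeeping argument on the three integration constants $C_0, C_1, C_2$, combining the three classification theorems already established in this subsection. The reverse implication requires essentially no work: by the inclusion $\Sp(2) \leq \SU(4)$ encoded in (\ref{eq:Sp2SU4}), every $\Sp(2)$-instanton is simultaneously an $I$-, $J$-, and $K$-$\SU(4)$-instanton, so in particular it is an $L$-$\SU(4)$-instanton for the given $L$. I would dispatch this direction in a single sentence.

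For the forward implication, I would argue case by case on $L \in \{I,J,K\}$. The key point is that $A$ is assumed to be an $L$-$\Spin(7)$-instanton living on the \emph{extended} bundle $\widetilde{E}_k \to X$, so Theorem \ref{thm:ExtendInstS1} applies and forces two of the three constants to vanish; meanwhile, the additional hypothesis that $A$ is an $L$-$\SU(4)$-instanton forces, via Theorem \ref{thm:SU4Class}, a (different) pair of constants to vanish. The crucial observation is that, for each fixed $L$, these two pairs together exhaust all three parameters.

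Concretely, for $L = I$, extendibility gives $C_0 = C_2 = 0$ by Theorem \ref{thm:ExtendInstS1}(a), while the $I$-$\SU(4)$ condition gives $C_1 = C_2 = 0$ by Theorem \ref{thm:SU4Class}(a); together these yield $C_0 = C_1 = C_2 = 0$. For $L = J$, extendibility gives $C_1 = C_2 = 0$ and the $J$-$\SU(4)$ condition gives $C_0 = C_1 = 0$, again forcing $C_0 = C_1 = C_2 = 0$. For $L = K$, extendibility gives $C_0 = C_1 = 0$ and the $K$-$\SU(4)$ condition gives $C_0 = C_2 = 0$, once more forcing all three constants to vanish. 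By Theorem \ref{cor:Sp2}, the simultaneous vanishing $(C_0, C_1, C_2) = (0,0,0)$ characterizes precisely the unique invariant $\Sp(2)$-instanton, which completes the forward direction.

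There is no analytic obstacle here, as all of the ODE analysis has been carried out in the cited theorems. The only point demanding any care is to confirm that, for each $L$, the pair of constants constrained by extendibility and the pair constrained by the $\SU(4)$ condition genuinely cover all of $\{C_0, C_1, C_2\}$; this is read off directly from Theorems \ref{thm:SU4Class} and \ref{thm:ExtendInstS1} (or equivalently from Table \ref{table:instantons}). In effect, the entire content of the corollary is the observation that these two constraint-pairs are never identical, so that requiring both simultaneously collapses the three-parameter family to its unique $\Sp(2)$-point.
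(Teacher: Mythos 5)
Your proposal is correct and follows exactly the route the paper intends: the paper states this corollary as an immediate observation from Theorems \ref{thm:SU4Class}, \ref{cor:Sp2}, and \ref{thm:ExtendInstS1}, and your case-by-case check that the extendibility constraints and the $\SU(4)$ constraints together force $(C_0,C_1,C_2)=(0,0,0)$ is precisely that argument. The reverse direction via $\Sp(2)\leq\SU(4)$ is also handled as the paper does.
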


\begin{cor} \label{thm:MainS1} On each $S^1$-bundle $\widetilde{E}_k \to T^*\CP^2$ for $k \in \Z$, and for each $L \in \{I,J,K\}$, the collection of $\SU(3)$-invariant $\Phi_L$-$\Spin(7)$-instantons on $\widetilde{E}_k$ is a one-parameter family 
modulo gauge transformations:
\begin{align*}
A_{I,C_1} &= A_{2k,0} - \frac{2k}{r^2}\kappa \otimes T + C_1  (r^4-1)^{1/2} \,\nu_1 \otimes T,  \\
A_{J,C_0} &= A_{2k,0} + \left( - \frac{ 2k}{r^2 } + C_0\frac{1-r^4}{r^2}\right) \kappa \otimes T, \\
A_{K,C_2} &= A_{2k,0} - \frac{2k}{r^2}\kappa \otimes T + C_2(r^2 - 1)^{1/2}\, \nu_2 \otimes T.
\end{align*}
Moreover, the only connection in the above families that is primitive Hermitian Yang-Mills with respect to $L$ is 
$$A_{L,0} = A_{2k,0} - \frac{2k}{r^2}\kappa \otimes T,$$
which is precisely the unique invariant $\Sp(2)$-instanton. 
\end{cor}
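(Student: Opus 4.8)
The plan is to assemble this corollary directly from the classification and extension results already established, since the statement is essentially a synthesis rather than a new computation. First I would invoke Proposition \ref{prop:S1Extension} to fix $n = 2k$, so that the restriction of $\widetilde{E}_k \to X$ to the principal locus $X^*$ is precisely $E_{2k} \to X^*$. Consequently, every $\SU(3)$-invariant $\Phi_L$-$\Spin(7)$-instanton on $\widetilde{E}_k$ restricts to an invariant $\Phi_L$-$\Spin(7)$-instanton on $E_{2k}$, and conversely any invariant instanton on $E_{2k}$ that extends smoothly across the zero section (up to gauge) defines one on $\widetilde{E}_k$. Thus classifying the invariant $\Phi_L$-$\Spin(7)$-instantons on $\widetilde{E}_k$ reduces to identifying those on $E_{2k}$ that extend.

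Next I would apply Theorem \ref{thm:Spin7Class} to describe the invariant $\Phi_L$-$\Spin(7)$-instantons on $E_{2k}$ as a three-parameter family indexed by $(C_0, C_1, C_2)$ for each $L$, and then apply Theorem \ref{thm:ExtendInstS1} to cut this down to the subfamily that extends across the zero section: for $L = I$ the constraint $C_0 = C_2 = 0$ leaves $C_1$ free; for $L = J$ the constraint $C_1 = C_2 = 0$ leaves $C_0$ free; and for $L = K$ the constraint $C_0 = C_1 = 0$ leaves $C_2$ free. Substituting $n = 2k$ into the surviving formulas of Theorem \ref{thm:Spin7Class} then reproduces exactly the three displayed one-parameter families $A_{I,C_1}$, $A_{J,C_0}$, $A_{K,C_2}$, which establishes the first claim.

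For the second claim I would intersect each extending one-parameter family with the $L$-$\SU(4)$ (pHYM) condition recorded in Theorem \ref{thm:SU4Class}. For $L = I$, the pHYM condition forces $C_1 = C_2 = 0$ in the $\Spin(7)$ parametrization; combined with the extension constraint $C_0 = C_2 = 0$, this collapses all three parameters to zero. The analogous bookkeeping for $L = J$ (where $C_0 = C_1 = 0$ meets $C_1 = C_2 = 0$) and for $L = K$ (where $C_0 = C_2 = 0$ meets $C_0 = C_1 = 0$) likewise forces $(C_0, C_1, C_2) = (0,0,0)$. In all three cases the surviving connection is $A_{2k,0} - \tfrac{2k}{r^2}\kappa \otimes T$, so that $A_{I,0} = A_{J,0} = A_{K,0}$; by Theorem \ref{cor:Sp2} this is precisely the unique invariant $\Sp(2)$-instanton. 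Finally, since $(C_0, C_1, C_2) = (0,0,0)$ trivially satisfies every extension constraint of Theorem \ref{thm:ExtendInstS1}, this $\Sp(2)$-instanton does live on $\widetilde{E}_k$.

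There is no genuine analytic obstacle here: the substantive work has already been carried out in Theorems \ref{thm:Spin7Class}, \ref{thm:SU4Class}, \ref{cor:Sp2}, and \ref{thm:ExtendInstS1}, together with the Tao--Tian removable-singularity input of Proposition \ref{prop:ExtendConnection}. The only point demanding care is the bookkeeping of which integration constant survives in each of the three cases, and verifying that the substitution $n = 2k$ reproduces the displayed formulas verbatim; this is routine but must be carried out consistently across $I$, $J$, and $K$, and one must remember to note that the three distinct pHYM normalizations genuinely coincide as a single connection.
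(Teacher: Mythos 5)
Your proposal is correct and follows exactly the route the paper intends: the corollary is an immediate synthesis of Proposition \ref{prop:S1Extension}, Theorem \ref{thm:Spin7Class}, Theorem \ref{thm:ExtendInstS1}, Theorem \ref{thm:SU4Class}, and Theorem \ref{cor:Sp2}, and your bookkeeping of which constant survives in each of the three cases (and of the intersection with the pHYM constraints) matches the paper's tables and statements.
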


\section{Invariant instantons on $\SO(3)$-bundles over $T^*\CP^2$} \label{sec:InvarInstSO3}

\indent \indent We now seek to construct and classify instantons over $X = T^*\CP^2$ with gauge group $S = \SO(3)$.  Our approach in this section largely follows that of the previous one.  That is, in $\S$\ref{sub:InvarSO3bundles}, we classify the invariant $\SO(3)$-bundles $P_n \to N_{1,1}$ and $Q_k \to \CP^2$.  Letting $E_n \to X^*$ denote the pullback of $P_n$ to the principal locus, we prove in Proposition \ref{prop:SO3Extension} that $E_n$ admits a suitable extension to $X$ if and only if $n$ is even.  Moreover, the extension is unique if $n \neq 0$, whereas the bundle $E_0$ admits two distinct extensions.  Then, in $\S$\ref{sub:InvarConnSO3}, we apply Wang's theorem to describe the $\SU(3)$-invariant connections on $P_n \to N_{1,1}$, and hence also on $E_n \to N_{1,1}$.  \\
\indent In $\S$\ref{sub:InvarInstantonSO3}, we construct and classify instantons.  Because we are primarily interested in those that extend to all of $X$, we restrict attention to the bundles $E_n \to X$ with $n = 2k$ even.  There are then two cases to consider: the bundles $E_{2k}$ with $k \neq 0$, and the trivial bundle $E_0$.  In the first case, the invariant $L$-$\Spin(7)$-instantons essentially reduce to those on the corresponding $S^1$-bundles.  In particular, we find a $3$-parameter family of $L$-$\Spin(7)$-instantons on $E_{2k} \to X^*$, of which a $1$-parameter subfamily consists of $L$-$\SU(4)$-instantons.  Moreover, the collection of $L$-$\Spin(7)$-instantons that extends to all of $X$ forms a $1$-parameter family modulo gauge, and the only $L$-$\SU(4)$-instanton in that family is the unique $\Sp(2)$-instanton: see Theorem \ref{thm:InvarInstE2k} and Corollary \ref{cor:InvarInstE2k}. \\
\indent However, the situation is rather different on the trivial $\SO(3)$-bundle $E_0 \to X^*$.  In Theorem \ref{thm:InstantonE0}, we find that the collection of $L$-$\SU(4)$-instantons on $E_0 \to X^*$ forms a $3$-parameter family modulo gauge.  However, in Theorem \ref{thm:NoExtensionSO3}, we show that the only one that extends over the singular orbit is the (flat) canonical connection.  We do not attempt a classification of $L$-$\Spin(7)$-instantons on $E_0 \to X^*$, leaving this to the interested reader.

\subsection{Invariant $\SO(3)$-bundles} \label{sub:InvarSO3bundles}

\indent \indent As in the previous section, the first step is the classify the homogeneous $\SO(3)$-bundles over $N_{1,1}$ and $\CP^2$.  For this, we need the following classification of homomorphisms $\lambda \colon \U(1) \to \SO(3)$ and $\chi \colon \U(2) \to \SO(3)$ up to element conjugacy.

\begin{prop}[Homomorphisms into $\SO(3)$] \label{prop:HomSO3} ${}$
\begin{enumerate}[(a)]
\item Up to conjugacy, every Lie group homomorphism $\lambda \colon \U(1) \to \SO(3)$ is of the form $\lambda = \lambda_n$, where
$$\lambda_n(e^{i\theta}) = \begin{pmatrix}
1 & 0 & 0 \\
0 & \cos(n\theta) & -\sin(n\theta) \\
0 & \sin(n\theta) & \cos(n\theta)
\end{pmatrix}, \ \ n \in \Z.$$
\item Up to conjugacy, every Lie group homomorphism $\chi \colon \U(2) \to \SO(3)$ is of the form $\chi = \chi_k$ for $k \in \Z$ or $\chi = \widehat{\chi}$, where
\begin{align*}
\chi_k \colon \U(2)  &\to \SO(3); \\
 A &\mapsto \chi_k(A) = \begin{pmatrix}
1 & 0 \\
0 & \det(A)^k
\end{pmatrix},
\end{align*}
and
\begin{align*}
\widehat{\chi} \colon \U(2) \cong \dfrac{\SU(2) \times S^1}{\Z_2} &\to \SO(3); \\
[B,z] &\mapsto \widehat{\chi}( [B,z]) = \varpi(B),  
\end{align*}
where $\varpi \colon \SU(2) \to \SO(3)$ is the standard double cover, and where we are viewing $\det(A)^k \in \U(1) \cong \SO(2)$ as a real $(2 \times 2)$ matrix in the standard way.
\end{enumerate}
\end{prop}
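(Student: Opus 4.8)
The plan is to exploit the representation theory of the compact groups $\U(1)$ and $\U(2)$ together with two structural facts about the target: $\SO(3)$ is centerless, and its only connected subgroups are the trivial group, its circle subgroups $\SO(2)$, and $\SO(3)$ itself.

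\textbf{Part (a).} Since $\U(1)$ is compact, connected, and abelian, the image $\lambda(\U(1))$ is a compact connected abelian subgroup of $\SO(3)$, hence a torus, and is therefore contained in a maximal torus $T \cong \SO(2) \leq \SO(3)$. As all maximal tori of $\SO(3)$ are conjugate, after conjugating by a suitable element of $\SO(3)$ we may assume $T$ is the standard block fixing the first coordinate axis. Then $\lambda$ reduces to a continuous homomorphism $\U(1) \to \SO(2) \cong \U(1)$, and these are exactly $e^{i\theta} \mapsto e^{in\theta}$ for $n \in \Z$; reinterpreting the target as rotations of the orthogonal plane yields $\lambda_n$. (One may note in passing that $\lambda_n$ is conjugate to $\lambda_{-n}$ via the Weyl-group involution $\theta \mapsto -\theta$, though this is not needed for the stated conclusion.)

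\textbf{Part (b).} I would first restrict $\chi$ to the commutator subgroup $[\U(2),\U(2)] = \SU(2)$. Because $\mathfrak{su}(2)$ is simple, the image $\chi(\SU(2))$ is a connected subgroup of $\SO(3)$ whose Lie algebra is either $0$ or all of $\mathfrak{so}(3)$; since the only three-dimensional connected subgroup of $\SO(3)$ is $\SO(3)$ itself, $\chi|_{\SU(2)}$ is either \emph{trivial} or \emph{surjective}. This dichotomy produces precisely the two families. In the trivial case, $\chi$ factors through $\U(2)/\SU(2) \cong \U(1)$ via $\det$, so $\chi = \psi \circ \det$ for some $\psi \colon \U(1) \to \SO(3)$; applying part (a) to $\psi$ yields, up to element conjugacy, $\chi_k$.

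In the surjective case, since $\SU(2)$ is simply connected while $\SO(3)$ is not, $\chi|_{\SU(2)}$ is a two-fold covering with kernel $\{\pm I\}$, and because $\mathrm{Aut}(\SO(3)) = \mathrm{Inn}(\SO(3))$ it is element conjugate to the standard double cover $\varpi$. The decisive point is the behavior on the center $Z(\U(2)) = \{wI : w \in \U(1)\}$: central elements commute with all of $\SU(2)$, so $\chi(Z)$ lies in the centralizer of $\chi(\SU(2)) = \SO(3)$, which is trivial since $\SO(3)$ is centerless; hence $\chi|_Z$ is trivial. Writing $\U(2) \cong (\SU(2) \times S^1)/\Z_2$ and checking that $[B,z] \mapsto \varpi(B)$ descends across $\Z_2 = \langle (-I,-1) \rangle$ (which it does, as $\varpi(-I) = I$), we recover $\widehat{\chi}$.

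The most delicate step is this surjective case, where three things must hold simultaneously: $\chi|_{\SU(2)}$ must be identified with $\varpi$ up to conjugacy, using simple-connectedness of $\SU(2)$ and triviality of $\mathrm{Out}(\SO(3))$; $\chi$ must be forced trivial on the center via the centralizer argument; and the resulting assignment must be verified well-defined on the quotient presentation of $\U(2)$. Each ingredient is standard, but together they require care to guarantee that no further homomorphisms are missed---in particular, none that are nontrivial on the center and none built from a nonstandard double cover.
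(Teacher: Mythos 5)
The paper states Proposition~\ref{prop:HomSO3} without proof, treating the classification as standard, so there is no in-paper argument to compare against. Your proof is correct and complete: part (a) is the usual maximal-torus reduction, and in part (b) the dichotomy on $\chi|_{\SU(2)}$ (trivial versus surjective, forced by simplicity of $\mathfrak{su}(2)$), the factorization through $\det$ in the trivial case, and the combination of simple-connectedness of $\SU(2)$, triviality of $\mathrm{Out}(\SO(3))$, and the centralizer argument killing $\chi$ on $Z(\U(2))$ in the surjective case together account for exactly the families $\chi_k$ and $\widehat{\chi}$, with the descent check across $\langle(-I,-1)\rangle$ closing the argument.
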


\begin{cor}[Homogeneous $\SO(3)$-bundles] ${}$
\begin{enumerate}[(a)]
\item Every homogeneous $\SO(3)$-bundle over $N_{1,1}$ is of the form
$$P_n := \frac{\SU(3) \times \SO(3)}{\U(1)} = \frac{\SU(3) \times \SO(3)}{\sim_n}, \ \text{ where } (g,s) \sim_n (gz, s \lambda_n(z)),$$
for $(g,s) \in \SU(3) \times \SO(3)$, $z \in \U(1)$, and $n \in \Z$.
\item Every homogeneous $\SO(3)$-bundle over $\CP^2$ is of one of the following two forms:
\begin{align*}
Q_k & := \frac{\SU(3) \times \SO(3)}{\U(2)} = \frac{\SU(3) \times \SO(3)}{\sim_k}, \ \text{ where } (g,s) \sim_k (gA, s\chi_k(A)), \\
Q_\wedge & := \frac{\SU(3) \times \SO(3)}{\U(2)} = \frac{\SU(3) \times \SO(3)}{\sim_\wedge}, \ \text{ where } (g,s) \sim_\wedge (gA, s \widehat{\chi}(A)),
\end{align*}
for $(g,s) \in \SU(3) \times \SO(3)$, $A \in \U(2)$, and $k \in \Z$. \\
\end{enumerate}
\end{cor}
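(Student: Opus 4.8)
The plan is to obtain this Corollary by feeding the classification of homomorphisms from Proposition \ref{prop:HomSO3} into the Construction lemma \ref{lem:Construction}. Recall that the Construction lemma sets up a bijection between isomorphism classes of homogeneous $S$-bundles over a homogeneous space $G/L$ and element-conjugacy classes of homomorphisms $\psi \colon L \to S$, under which $\psi$ corresponds to $P_\psi = G \times_\psi S$. Taking $G = \SU(3)$ and $S = \SO(3)$, the classification of homogeneous $\SO(3)$-bundles therefore reduces entirely to the classification of the relevant homomorphisms up to conjugacy, which Proposition \ref{prop:HomSO3} already provides. So the entire argument is a matter of transporting that enumeration through the bijection.

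For part (a), I would apply the Construction lemma with $L = \U(1)$, so that $G/L = \SU(3)/\U(1) = N_{1,1}$. It asserts that every homogeneous $\SO(3)$-bundle over $N_{1,1}$ is isomorphic to $P_\lambda$ for some homomorphism $\lambda \colon \U(1) \to \SO(3)$. By Proposition \ref{prop:HomSO3}(a), such a $\lambda$ is conjugate to one of the maps $\lambda_n$, $n \in \Z$, and since the Construction lemma identifies element-conjugate homomorphisms with isomorphic bundles, we conclude $P_\lambda \cong P_{\lambda_n} =: P_n$. Unwinding the defining equivalence $(g,s) \sim (g\ell, s\psi(\ell))$ with $\psi = \lambda_n$ recovers exactly the relation $\sim_n$ stated in the Corollary.

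For part (b), I would instead take $L = \U(2)$, giving $G/L = \SU(3)/\U(2) = \CP^2$. The Construction lemma again says every homogeneous $\SO(3)$-bundle over $\CP^2$ is some $Q_\chi$ with $\chi \colon \U(2) \to \SO(3)$, and Proposition \ref{prop:HomSO3}(b) tells us that $\chi$ is conjugate either to a $\chi_k$ ($k \in \Z$) or to $\widehat{\chi}$; substituting these two families into $Q_\chi$ produces precisely $Q_k$ and $Q_\wedge$ with their stated equivalence relations. Because all of the real content --- both the bijection and the enumeration of homomorphisms --- is supplied by the cited results, there is no genuine obstacle here; the statement is the surjectivity assertion that every bundle appears in the list, which is immediate. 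If one additionally wishes to confirm the list is irredundant, it suffices to observe that $\widehat{\chi}$ is nontrivial on $\SU(2) \leq \U(2)$ whereas each $\chi_k$ is trivial there, and that distinct $\chi_k$ differ in their degree on the determinant, so that $Q_\wedge$ and the bundles $Q_k$ are pairwise non-isomorphic.
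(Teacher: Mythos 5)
Your main argument is correct and is exactly the paper's (implicit) route: the Corollary is stated without proof precisely because it is the immediate composite of the Construction Lemma \ref{lem:Construction} with Proposition \ref{prop:HomSO3}, which is what you do. One caveat on your final, extraneous remark about irredundancy: the claim that the bundles $Q_k$ are pairwise non-isomorphic is false, since $\chi_k$ and $\chi_{-k}$ are element-conjugate in $\SO(3)$ (conjugate by $\diag(-1,1,-1)$, which reverses the $\SO(2)$ block), so $Q_k \cong Q_{-k}$, and likewise $P_n \cong P_{-n}$. This does not affect the Corollary, which only asserts that every homogeneous $\SO(3)$-bundle appears in the list, but you should either drop that sentence or correct it to say the list is irredundant only up to the sign of $k$ (resp.\ $n$).
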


\indent Let $\mathrm{pr} \colon X^* \approx (0,\infty) \times N_{1,1} \to N_{1,1}$ denote the projection.  For each $n \in \mathbb{Z}$, we have an $\SO(3)$-bundle $E_n := \mathrm{pr}^*P_n \to X^*$ over the principal locus.  On the other hand, letting $\Pi \colon T^*\CP^2 \to \CP^2$ denote the standard projection, we have $\SO(3)$-bundles $\widetilde{E}_k := \Pi^*Q_k \to X$ and $\widetilde{E}_\wedge := \Pi^*Q_\vee \to X$ over the entire manifold.  We now address the matching question:

\begin{prop}[$\SO(3)$-bundle extension] \label{prop:SO3Extension} Let $n \in \Z$.
\begin{enumerate}[(a)]
\item Suppose $n$ is odd.  Then $\widetilde{E}_k$ and $\widetilde{E}_\wedge$ are not extensions of $E_n$.
\item Suppose $n \neq 0$ is even.  Then $\widetilde{E}_k$ is an extension of $E_n$ if and only if $n = 2k$.  (However, $\widetilde{E}_\wedge$ is not an extension of $E_n$ for $n \neq 0$.)
\item Suppose $n = 0$.  Then $\widetilde{E}_k$ is an extension of $E_0$ if and only if $k = 0$.  Moreover, $\widetilde{E}_\wedge$ is an extension of $E_0$.
\end{enumerate}
\end{prop}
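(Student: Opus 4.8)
The plan is to reduce everything to the bundle extension lemma (Proposition~\ref{prop:BundleExtension}), which, for the group diagram $\SU(3) \geq \U(2) \geq \U(1)$, asserts that $\widetilde{E}_\chi = \Pi^* Q_\chi$ extends $E_\lambda = \mathrm{pr}^* P_\lambda$ precisely when $\lambda$ is element-conjugate in $\SO(3)$ to $\chi \circ \iota$, with $\iota$ the inclusion \eqref{eq:KHInclusion}. Given the classification in Proposition~\ref{prop:HomSO3}, the entire statement thus collapses to three computations of the composite $\chi \circ \iota \colon \U(1) \to \SO(3)$ — for $\chi = \chi_k$ and for $\chi = \widehat{\chi}$ — together with the observation that two homomorphisms $\lambda_m, \lambda_{m'} \colon \U(1) \to \SO(3)$ are element-conjugate if and only if $m = \pm m'$ (the rotation number $|m|$ is a complete conjugacy invariant, with $m \mapsto -m$ realized by conjugation by $\mathrm{diag}(-1,1,-1) \in \SO(3)$). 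In particular $\lambda_m$ is trivial iff $m = 0$.

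First I would compute $\chi_k \circ \iota$. Since $\iota(e^{i\theta}) = \mathrm{diag}(e^{i\theta}, e^{i\theta})$ has determinant $e^{2i\theta}$, and $\chi_k(A) = \mathrm{diag}(1, \det(A)^k)$ with the lower block read as the planar rotation through $k \arg\det(A)$, one reads off directly that $\chi_k \circ \iota = \lambda_{2k}$ on the nose. Hence $\widetilde{E}_k$ extends $E_n$ iff $\lambda_n$ is conjugate to $\lambda_{2k}$, i.e.\ iff $n = \pm 2k$. Because the Construction Lemma~\ref{lem:Construction} gives $P_n \cong P_{-n}$ and $Q_k \cong Q_{-k}$ (whence $\widetilde{E}_k \cong \widetilde{E}_{-k}$), this is exactly the stated criterion $n = 2k$; in particular it forces $n$ even, settling the $\chi_k$-part of (a), (b), and (c) simultaneously.

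The more delicate computation is $\widehat{\chi} \circ \iota$, and this is where I expect the only real subtlety to lie. Here I would unwind the isomorphism $\U(2) \cong (\SU(2) \times S^1)/\Z_2$, under which $[B, z] \mapsto zB$ and $\Z_2 = \{[I,1], [-I,-1]\}$. The scalar matrix $\iota(e^{i\theta}) = e^{i\theta} I_2$ corresponds to $[I, e^{i\theta}]$ (the only other lift, $[-I, -e^{i\theta}]$, being identified with it), so it lies entirely in the central $S^1$-factor. Since $\widehat{\chi}([B,z]) = \varpi(B)$ depends only on the $\SU(2)$-factor, we obtain $\widehat{\chi} \circ \iota = \varpi(I) = \mathrm{Id}$, the trivial homomorphism $\lambda_0$. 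Therefore $\widetilde{E}_\wedge$ extends $E_n$ iff $\lambda_n$ is conjugate to the trivial map, i.e.\ iff $n = 0$.

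Assembling the three cases then requires no further work: for $n$ odd, $n$ equals neither an even $2k$ nor $0$, so neither $\widetilde{E}_k$ nor $\widetilde{E}_\wedge$ extends $E_n$, giving (a); for $n \neq 0$ even, $\widetilde{E}_k$ extends iff $n = 2k$ while $n \neq 0$ rules out $\widetilde{E}_\wedge$, giving (b); and for $n = 0$ the equation $2k = 0$ forces $k = 0$ while $\widehat{\chi} \circ \iota = \lambda_0$ matches automatically, so $\widetilde{E}_\wedge$ always extends $E_0$, giving (c). The one care point throughout is bookkeeping the sign ambiguity $m \mapsto -m$ in the conjugacy classification against the bundle isomorphisms $\widetilde{E}_k \cong \widetilde{E}_{-k}$, so that the raw answer $n = \pm 2k$ is correctly recorded as $n = 2k$.
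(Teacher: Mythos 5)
Your proposal is correct and follows essentially the same route as the paper: compute $\chi_k \circ \iota = \lambda_{2k}$ and $\widehat{\chi} \circ \iota = \lambda_0$, then invoke the bundle extension lemma (Proposition~\ref{prop:BundleExtension}). Your extra bookkeeping of the element-conjugacy ambiguity $\lambda_m \sim \lambda_{-m}$ against the isomorphisms $P_n \cong P_{-n}$ and $\widetilde{E}_k \cong \widetilde{E}_{-k}$ is a point the paper's proof passes over silently, and it is handled correctly.
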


\begin{proof} Let $\mathrm{Id}_n$ denote the $n \times n$ identity matrix.  For $e^{i\theta} \in \U(1)$, we observe that
\begin{align*}
(\chi_k \circ \iota)(e^{i\theta}) = \chi_k\! \begin{pmatrix} e^{i\theta} & 0 \\ 0 & e^{i\theta} \end{pmatrix} = \begin{pmatrix} 1 & 0 \\ 0 & e^{2ki\theta} \end{pmatrix} = \lambda_{2k}(e^{i\theta}),
\end{align*}
and
$$(\widehat{\chi} \circ \iota)(e^{i\theta}) = \widehat{\chi}\begin{pmatrix} e^{i\theta} & 0 \\ 0 & e^{i\theta} \end{pmatrix} = \widehat{\chi}\left(\left[ \mathrm{Id}_2, e^{2i\theta} \right]\right) = \varpi(\mathrm{Id}_2) = \mathrm{Id}_3 = \lambda_0(e^{i\theta}).$$
The result follows from these calculations together with the bundle extension lemma \ref{prop:BundleExtension}.
\end{proof}

\subsection{Invariant connections} \label{sub:InvarConnSO3}

\indent \indent We now consider $\SU(3)$-invariant connections on the homogeneous $\SO(3)$-bundles $\pi_n \colon P_n \to N_{1,1}$ associated to the maps $\lambda_n \colon \U(1) \to \SO(3)$ of Proposition \ref{prop:HomSO3}.  As in previous sections, we choose $\mathfrak{m} \subset \mathfrak{su}(3)$ such that the splitting $\mathfrak{su}(3) = \mathfrak{m} \oplus \mathfrak{u}(1)$.  Also as in previous sections, we regard $\SU(3)$-invariant connections on $P_n$ as elements of $\Omega^1(\SU(3); \mathfrak{so}(3))$ obtained as left-invariant extensions of linear maps $\Lambda + d\lambda_n|_e \colon \mathfrak{m} \oplus \mathfrak{u}(1) \to \mathfrak{so}(3)$, where $\Lambda \colon \mathfrak{m} \to \mathfrak{so}(3)$ is $\U(1)$-equivariant. \\
\indent To be clear, the $\U(1)$-action on $\mathfrak{so}(3)$ is the $(\mathrm{Ad} \circ \lambda)$-action, whereby $z \in \U(1)$ acts on $T \in \mathfrak{so}(3)$ via $z \cdot T := \lambda(z) T \lambda(z)^{-1}$.  In terms of the following basis $\{T_1, T_2, T_3\}$ of $\mathfrak{so}(3)$
\begin{align*}
T_1 & = \begin{bmatrix} 0 & 0 & 0 \\ 0 & 0 & -1 \\ 0 & 1 & 0 \end{bmatrix}\!, &
T_2 & = \begin{bmatrix} 0 & 0 & 1 \\ 0 & 0 & 0 \\ -1 & 0 & 0 \end{bmatrix}\!, &
T_3 & = \begin{bmatrix} 0 & -1 & 0 \\ 1 & 0 & 0 \\ 0 & 0 & 0 \end{bmatrix}\!,
\end{align*}
we see that $e^{i\theta} \cdot T_1 = T_1$ and $e^{i\theta} \cdot T_2 = \cos(n\theta)T_2 + \sin(n\theta)T_3$ and $e^{i\theta} \cdot T_3 = -\sin(n\theta)T_2 + \cos(n\theta)T_3$.  As such, we have a $\U(1)$-irreducible decomposition
\begin{align*}
\mathfrak{so}(3) & \cong (T_1) \oplus \C_n, \text{ if } n \neq 0, \\
\mathfrak{so}(3) & \cong (T_1) \oplus (T_2) \oplus (T_3), \text{ if } n = 0,
\end{align*}
where $\C_n$ denotes the real $\U(1)$-representation on $\C$ via $e^{i\theta} \cdot z = e^{ni\theta}z$ for $n \in \Z_{\geq 0}$. \\
\indent Now, let $\{H, X_1, \ldots, X_7\}$ be the basis of $\mathfrak{su}(3)$ given by (\ref{eq:HBasis})-(\ref{eq:XBasis}).  Recalling the irreducible decomposition of real $\U(1)$-modules given by
\begin{align*}
\mathfrak{m} & = (X_4) \oplus (X_1) \oplus (X_5) \oplus (X_2, X_6) \oplus (X_3, X_7) \\
& \cong \R \oplus \R \oplus \R \oplus \C_3 \oplus \C_3,
\end{align*}
an application of Schur's lemma implies that the every $\U(1)$-equivariant linear map $\Lambda \colon \mathfrak{m} \to \mathfrak{so}(3)$ takes one of the following three forms, where $x = (x_4, x_1, x_5, (x_2,x_6), (x_3, x_7)) \in \mathfrak{m}$:
\begin{enumerate}
\item For $n \neq 0, 3$:
\begin{align*}
\Lambda_p(x) & = \left(p_0 x_4 + p_1x_1 + p_2 x_5\right) T_1,
\end{align*}
for $p = (p_0, p_1, p_2) \in \R^3$.
\item For $n = 0$: 
\begin{align*}
\Lambda_{p,q,r}(x) & = \left(p_0 x_4 + p_1x_1 + p_2 x_5\right) T_1 + \left(q_0 x_4 + q_1x_1 + q_2 x_5\right) T_2 + \left(s_0 x_4 + s_1x_1 + s_2 x_5\right) T_3,
\end{align*}
for $p, q, s \in \R^3$.
\item For $n = 3$:
\begin{align*}
\Lambda_{p,v}(x) & = \left(p_0 x_4 + p_1x_1 + p_2 x_5\right) T_1 + w_1(x_2T_2 + x_6T_3) + w_2(x_3T_2 + x_7T_3),
\end{align*}
for $p = (p_0, p_1, p_2) \in \R^3$ and $w = (w_1, w_2) \in \R^2$.
\end{enumerate}
On the other hand, a short calculation shows that $d\lambda_n|_e \colon \mathfrak{u}(1) \to \mathfrak{so}(3)$ is the linear map satisfying $d\lambda_n|_e(H) = nT_1$. \\
\indent We now use this information to classify the $\SU(3)$-invariant connections on $E_n = \mathrm{pr}^*P_n \to X^*$, where again we recall that $E_n$ is equivariantly diffeomorphic to the product bundle $(0,\infty) \times P_n \to (0,\infty) \times N_{1,1}$.  Repeating the logic employed in $\S$\ref{sub:InvariantConnectionsS1}, we arrive at the following:

\begin{prop}[Invariant connections on $E_n$] \label{prop:SchurSO3} ${}$
\begin{enumerate}
\item For $n \neq 0,3$, every $\SU(3)$-invariant connection $A$ on $E_n \to X^*$ in temporal gauge can be identified with an $\mathfrak{so}(3)$-valued $1$-form
\begin{align*}
A = \left( p_0(t) \kappa + p_1(t) \nu_1 + p_2(t) \nu_2 + n\zeta \right) \otimes T_1,
\end{align*}
for some functions $p_j \colon (0,\infty) \to \R$ with $j = 0,1,2$.  Conversely, every such $1$-form yields an $\SU(3)$-invariant connection on $E_n \to X^*$  in temporal gauge.
\item For $n = 0$, every $\SU(3)$-invariant connection $A$ on $E_0 \to X^*$ in temporal gauge can be identified with an $\mathfrak{so}(3)$-valued $1$-form
\begin{align*}
A & = \left( p_0(t) \kappa + p_1(t) \nu_1 + p_2(t) \right) \otimes T_1 + \left( q_0(t) \kappa + q_1(t) \nu_1 + q_2(t) \right) \otimes T_2 \\
& \ \ \ \ \ \  + \left( s_0(t) \kappa + s_1(t) \nu_1 + s_2(t) \right) \otimes T_3,
\end{align*}
for some functions $p_j, q_j, s_j \colon (0,\infty) \to \R$ with $j = 0,1,2$.  Conversely, every such $1$-form yields an $\SU(3)$-invariant connection on $E_0 \to X^*$  in temporal gauge.
\item For $n = 3$, every $\SU(3)$-invariant connection $A$ on $E_3 \to X^*$ in temporal gauge can be identified with an $\mathfrak{so}(3)$-valued $1$-form
\begin{align*}
A & = \left( p_0(t) \kappa + p_1(t) \nu_1 + p_2(t) \nu_2 + 3\zeta \right) \otimes T_1 \\
& \ \ \ \ \ \ + \left( w_1(t) \mu_1 - w_2(t) \sigma_1 \right) \otimes T_2 + \left( w_1(t) \mu_2 + w_2(t) \sigma_2 \right) \otimes T_3,
\end{align*}
for some functions $p_0, p_1, p_2, w_1, w_2 \colon (0,\infty) \to \R$.  Conversely, every such $1$-form yields an $\SU(3)$-invariant connection on $E_3 \to X^*$  in temporal gauge.
\end{enumerate}
\end{prop}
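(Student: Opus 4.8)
The plan is to mimic the derivation of the $\SU(3)$-invariant $S^1$-connections carried out in $\S$\ref{sub:InvariantConnectionsS1}, with the abelian target $\mathfrak{s} = \mathrm{span}(T)$ replaced by $\mathfrak{so}(3) = \mathrm{span}(T_1, T_2, T_3)$. By Wang's theorem, the $\SU(3)$-invariant connections on $P_n \to N_{1,1}$ are in bijection with $\U(1)$-equivariant linear maps $\Lambda \colon \mathfrak{m} \to \mathfrak{so}(3)$, where $\U(1)$ acts on $\mathfrak{so}(3)$ via $\Ad \circ \lambda_n$. This equivariance is precisely the condition that $\Lambda$ intertwine the two $\U(1)$-module structures, so Schur's lemma applies and produces exactly the three forms of $\Lambda$ recorded immediately above the statement---one for $n \neq 0,3$, one for $n = 0$, and one for $n = 3$---according to how the irreducible summands of $\mathfrak{m} \cong \R \oplus \R \oplus \R \oplus \C_3 \oplus \C_3$ pair with those of $\mathfrak{so}(3)$.

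To convert each such $\Lambda$ into an explicit $1$-form, I would form the linear map $L := \Lambda + d\lambda_n|_e \colon \mathfrak{m} \oplus \mathfrak{u}(1) \to \mathfrak{so}(3)$, recalling that $d\lambda_n|_e(H) = nT_1$, and then take its left-invariant extension, which is exactly $L \circ \eta$ for the Maurer--Cartan form $\eta$ of $\S$\ref{sub:MCForm}. Since the coefficients of the basis elements $H, X_4, X_1, X_5, X_2, X_6, X_3, X_7$ in $\eta$ are respectively $\zeta, \kappa, \nu_1, \nu_2, \mu_1, \mu_2, -\sigma_1, \sigma_2$, evaluating $L \circ \eta$ reduces to reading off these dual $1$-forms. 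For $n \neq 0,3$ this gives $(n\zeta + p_0\kappa + p_1\nu_1 + p_2\nu_2) \otimes T_1$; for $n = 0$ the three independently chosen images into $T_1, T_2, T_3$ produce the three parallel terms; and for $n = 3$ the images of $X_2, X_6, X_3, X_7$ into the $(T_2, T_3)$-plane reassemble, after grouping by $T_2$ and $T_3$, into the stated $w_1, w_2$ terms.

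To pass from $P_n$ to $E_n \to X^*$, I would invoke Proposition \ref{prop:Product}, which identifies $E_n$ equivariantly with $(0,\infty) \times P_n$, together with the temporal-gauge discussion of $\S$\ref{sub:InvariantE}: an invariant connection in temporal gauge is a $t$-parametrized family $\alpha(t)$ of invariant connections on $P_n$. Thus each constant $p_j$ (and $q_j, s_j$, or $w_j$) is promoted to a function of $t$, yielding the claimed $1$-forms; conversely, any such $1$-form restricts at each fixed $t$ to an invariant connection on $P_n$, hence defines an invariant connection on $E_n$ in temporal gauge. This gives the stated bijection in all three cases.

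The only genuinely delicate step is the $n = 3$ bookkeeping. Here the \emph{two} copies of $\C_3$ inside $\mathfrak{m}$, namely $(X_2, X_6)$ and $(X_3, X_7)$, both map equivariantly into the \emph{single} copy $\C_3 \cong (T_2, T_3) \subset \mathfrak{so}(3)$, so Schur's lemma permits a two-real-parameter family of intertwiners rather than forcing them to zero. One must then track the sign coming from the $-\sigma_1$ coefficient of $X_3$ in $\eta$ in order to obtain the correct combination $(w_1\mu_1 - w_2\sigma_1)\otimes T_2 + (w_1\mu_2 + w_2\sigma_2)\otimes T_3$. Everything else is a routine transcription of the $S^1$ computation.
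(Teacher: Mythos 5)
Your proposal is correct and takes essentially the same approach as the paper, which in fact gives no separate proof beyond the Schur's-lemma discussion preceding the statement and the remark that one repeats the logic of $\S$\ref{sub:InvariantConnectionsS1}: Wang's theorem, the $\U(1)$-isotypic decompositions of $\mathfrak{m}$ and $\mathfrak{so}(3)$, the left-invariant extension of $\Lambda + d\lambda_n|_e$ read off against the Maurer--Cartan form (including the sign from the $-\sigma_1 \otimes X_3$ term), and Proposition~\ref{prop:Product} together with the temporal-gauge discussion of $\S$\ref{sub:InvariantE} to promote the constants to functions of $t$. One caveat on the $n=3$ case you single out as delicate: since $\Hom_{\U(1)}(\C_3,\C_3)\cong\C$ is two-real-dimensional, each of the summands $(X_2,X_6)$ and $(X_3,X_7)$ a priori contributes two real parameters, so a literal Schur count gives four functions rather than the two $w_1,w_2$ appearing in the paper's displayed $\Lambda_{p,v}$ --- but this discrepancy is inherited from the paper's formula rather than introduced by your argument, and it is immaterial for the rest of the paper since $n=3$ is odd and the bundle $E_3$ is never used again.
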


\subsection{Invariant instantons} \label{sub:InvarInstantonSO3}

\indent \indent We now aim to classify invariant instantons on the bundles $E_n \to X^*$.  Because we are primarily interested in connections that extend to all of $X$, we restrict attention to the case of $n = 2k$ even in view of Proposition \ref{prop:SO3Extension}.  In light of the case splitting of Proposition \ref{prop:SchurSO3}, we will consider the cases of $k \neq 0$ and $k = 0$ separately.  Going forward, we change variables from $t \in (0,\infty)$ to $r \in (1,\infty)$ via (\ref{eq:dtdr}).

\subsubsection{Invariant instantons on $E_{2k}$ for $k \neq 0$} \label{subsub:InvarInstE2k}

\indent \indent Let us consider invariant instantons on $E_{2k} \to X^*$ for $k \neq 0$.  In view of Proposition \ref{prop:SchurSO3}(a), we see that the invariant connections on the $\SO(3)$-bundles $E_{2k} \to X^*$ essentially reduce to those on the corresponding $S^1$-bundles (which we also called $E_{2k}$).  As such, the relevant equations will be exactly the same, and the conclusions of $\S$\ref{sub:InvarInstS1} hold \emph{mutatis mutandis}.  Letting $A_{2k,0} = 2k\zeta \otimes T_1$ denote the canonical connection on $E_{2k} \to X^*$ for $k \neq 0$, we record the result here:

\begin{thm}[Invariant instantons on $E_{2k}$ for $k \neq 0$] \label{thm:InvarInstE2k} Let $A$ be an $\SU(3)$-invariant connection in temporal gauge on the $\SO(3)$-bundle $E_{2k} \to X^*$ for $k \neq 0$, so that $A = A_{2k,0} + ( p_0(t) \kappa + p_1(t) \nu_1 + p_2(t) \nu_2 ) \otimes T_1$ for some functions $p_0, p_1, p_2 \colon (1, \infty) \to \R$.  Then:
\begin{enumerate}[(a)]
\item $A$ is an $I$-$\Spin(7)$-instanton if and only if
\begin{align*}
p_0(r) & = -\frac{2k}{r^2} \left( 1 + \frac{C_0}{r^4-1} \right), & p_1(r) & = C_1  (r^4-1)^{1/2}, & p_2(r) & = C_2 (r^4 - 1)^{-3/2} 
\end{align*}
for some constants $C_0, C_1, C_2 \in \R$.  Moreover, in this case:
\begin{itemize}
\item $A$ is an $I$-$\SU(4)$-instanton if and only if $C_1 = C_2 = 0$.
\item $A$ extends to a connection on $\widetilde{E}_k \to X$ if and only if $C_0 = C_2 = 0$.
\end{itemize}
\item $A$ is a $J$-$\Spin(7)$-instanton if and only if
\begin{align*}
p_0(r) & = - \frac{2k}{r^2} + C_0\frac{1-r^4}{r^2}, & p_1(r) & = C_1 (r^4 - 1)^{-3/2}, & p_2(r) & = C_2(r^4 - 1)^{-3/2},
\end{align*}
for some constants $C_0, C_1, C_2 \in \R$.  Moreover, in this case:
\begin{itemize}
\item $A$ is a $J$-$\SU(4)$-instanton if and only if $C_0 = C_1 = 0$.
\item $A$ extends to a connection on $\widetilde{E}_k \to X$ if and only if $C_1 = C_2 = 0$.
\end{itemize}
\item $A$ is a $K$-$\Spin(7)$-instanton if and only if
\begin{align*}
p_0(r) & = -\frac{2k}{r^2} \left( 1 + \frac{C_0}{r^4-1} \right), & p_1(r) & = C_1(r^4 - 1)^{-3/2}, & p_2(r) & = C_2(r^2 - 1)^{1/2},
\end{align*}
for some constants $C_0, C_1, C_2 \in \R$.  Moreover, in this case:
\begin{itemize}
\item $A$ is a $K$-$\SU(4)$-instanton if and only if $C_0 = C_2 = 0$.
\item $A$ extends to a connection on $\widetilde{E}_k \to X$ if and only if $C_0 = C_1 = 0$.
\end{itemize}
\end{enumerate}
\end{thm}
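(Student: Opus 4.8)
The plan is to exploit the abelian reduction that was already flagged in the discussion preceding the statement: for $n = 2k$ with $k \neq 0$, the invariant connections on the $\SO(3)$-bundle $E_{2k}$ take values entirely in the line $\mathrm{span}(T_1) \subset \mathfrak{so}(3)$. Since $2k$ is even and nonzero, we have $2k \neq 0$ and $2k \neq 3$, so Proposition \ref{prop:SchurSO3}(a) applies verbatim, and every invariant connection in temporal gauge has the form $A = A_{2k,0} + (p_0 \kappa + p_1 \nu_1 + p_2 \nu_2) \otimes T_1$. The first step is therefore simply to record that this $1$-form is formally identical to the $S^1$-connection $A_{n,p}$ of $\S$\ref{sub:InvariantConnectionsS1}, with the generator $T$ replaced by $T_1$ and $n$ replaced by $2k$.

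The key step is the curvature computation. Because $T_1$ spans a one-dimensional, hence abelian, subalgebra of $\mathfrak{so}(3)$, the bracket term $\tfrac{1}{2}[\widetilde{\alpha}_t \wedge \widetilde{\alpha}_t]$ appearing in (\ref{eq:CurvatureFA}) vanishes identically, exactly as it did in the $S^1$ case. Consequently $F_A$ is given by the analogue of (\ref{eq:FAS1}) with $T \mapsto T_1$ and $n \mapsto 2k$. Since the $L$-$\Spin(7)$-, $L$-$\SU(4)$-, and $\Sp(2)$-instanton conditions depend only on $F_A$ together with the fixed Calabi forms (\ref{eq:CalabiHK})--(\ref{eq:CalabiSpin7}), which are unchanged, setting the relevant components of $\ast F_A + \Phi_L \wedge F_A$ to zero produces scalar equations in $p_0, p_1, p_2$ in which the generator $T_1$ simply factors out. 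The resulting ODE systems are thus identical to those obtained in the proofs of Theorems \ref{thm:Spin7Class} and \ref{thm:SU4Class}, and solving them reproduces the stated formulas; the $\SU(4)$-subconditions follow either directly or via Proposition \ref{prop:SU4Spin7A}.

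For the extendibility claims, the same principle governs the norm computations in Theorem \ref{thm:ExtendInstS1}: the pointwise norm $|F_A|^2$ is determined by the coefficient functions and the constant $\langle T_1, T_1 \rangle$, so up to an overall positive factor it agrees with the $S^1$ expressions. In particular the set of parameters for which $|F_A|^2$ fails to have a removable singularity at $r = 1$ is unaltered, and Proposition \ref{prop:ExtendConnection} (the Tao--Tian removable singularity theorem) yields the extension conditions $C_0 = C_2 = 0$, $C_1 = C_2 = 0$, and $C_0 = C_1 = 0$ in cases (a), (b), (c) respectively.

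I do not anticipate any genuine obstacle: the entire content is that the abelian reduction collapses the $\SO(3)$ problem onto the already-solved $S^1$ problem, so the arguments of $\S$\ref{sub:InvarInstS1} apply \emph{mutatis mutandis}. The only point requiring a moment's care is the normalization of $T_1$ relative to the generator $T$ used earlier; one checks that rescaling the single gauge direction leaves the instanton ODEs unchanged (they are homogeneous in that direction) and alters $|F_A|^2$ only by a positive constant, hence does not affect the boundedness criterion at $r = 1$.
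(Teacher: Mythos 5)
Your proposal is correct and takes essentially the same route as the paper: the authors likewise observe that by Proposition \ref{prop:SchurSO3}(a) the invariant connections on $E_{2k}$ ($k \neq 0$) take values in the abelian line $\mathrm{span}(T_1)$, so the instanton ODEs and extension criteria reduce verbatim to the $S^1$ case, and they invoke the results of $\S$\ref{sub:InvarInstS1} \emph{mutatis mutandis}. Your additional remarks on the vanishing of the bracket term and the harmlessness of rescaling $T_1$ correctly fill in the details the paper leaves implicit.
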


\begin{cor}[Invariant $\Sp(2)$-instantons on $E_{2k}$ for $k \neq 0$] \label{cor:InvarInstE2k} For each $k \neq 0$, the $\SO(3)$-bundle $E_{2k} \to X^*$ admits a unique $\SU(3)$-invariant $\Sp(2)$-instanton up to gauge transformations, namely
$$A = A_{2k,0} - \frac{2k}{r^2}\kappa \otimes T_1.$$
Moreover, for each $k \neq 0$, this $\Sp(2)$-instanton is not flat and extends to a connection on $\widetilde{E}_k \to X$.
\end{cor}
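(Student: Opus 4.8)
The plan is to deduce this directly from Theorem \ref{thm:InvarInstE2k} together with the characterization of $\Sp(2)$-instantons in Corollary \ref{cor:Sp2Spin7relation}, mirroring the $S^1$ argument of Theorem \ref{cor:Sp2}. The key structural observation is that since $n = 2k$ is even and nonzero, we have $n \neq 0$ and $n \neq 3$, so Proposition \ref{prop:SchurSO3}(a) applies: every $\SU(3)$-invariant connection on $E_{2k} \to X^*$ takes values in the abelian line $\mathrm{span}(T_1) \subset \mathfrak{so}(3)$. Consequently the bracket term in the curvature vanishes, and the entire classification reduces verbatim to the $S^1$ computation with $T$ replaced by $T_1$; this is precisely what underwrites Theorem \ref{thm:InvarInstE2k}, which I may assume.

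First I would recall that, by \eqref{eq:Sp2SU4} and Corollary \ref{cor:Sp2Spin7relation}, a connection is an $\Sp(2)$-instanton if and only if it is simultaneously an $I$-, $J$-, and $K$-$\SU(4)$-instanton. I would then intersect the three one-parameter families of $L$-$\SU(4)$-instantons extracted from Theorem \ref{thm:InvarInstE2k} (the respective $\Spin(7)$-families subject to the stated vanishing conditions on the constants). The $I$-$\SU(4)$ condition forces $p_1 = p_2 = 0$ immediately; imposing in addition the $J$-$\SU(4)$ (equivalently $K$-$\SU(4)$) profile $p_0 = -2k/r^2$ then pins down the remaining constant, since matching $-\tfrac{2k}{r^2}\bigl(1 + \tfrac{C_0}{r^4-1}\bigr) = -\tfrac{2k}{r^2}$ for all $r$ forces $C_0 = 0$ (using $k \neq 0$). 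Thus the triple intersection consists of the single connection $A = A_{2k,0} - \frac{2k}{r^2}\kappa \otimes T_1$, establishing existence and uniqueness up to gauge.

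For the final two assertions, I would compute the curvature via \eqref{eq:CurvatureFA} and the $\SO(3)$-analogue of \eqref{eq:FAS1MC}; since $A$ is $\mathrm{span}(T_1)$-valued, this coincides with the $S^1$ curvature of Theorem \ref{cor:Sp2} with $n = 2k$, namely
\begin{equation*}
F_A = 2k\left[ \frac{2}{r^3}\,dr \wedge \kappa - \left(1 + \frac{1}{r^2}\right)\mu_1 \wedge \mu_2 - \left(1 - \frac{1}{r^2}\right)\sigma_1 \wedge \sigma_2 + \frac{2}{r^2}\,\nu_1 \wedge \nu_2 \right] \otimes T_1,
\end{equation*}
which is manifestly nonzero for $k \neq 0$, proving non-flatness. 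Finally, in the $I$-parametrization this connection has $C_0 = C_1 = C_2 = 0$, and in particular satisfies the extension criterion $C_0 = C_2 = 0$ recorded in Theorem \ref{thm:InvarInstE2k}(a); hence it extends smoothly across the zero section to a connection on $\widetilde{E}_k \to X$.

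I expect no genuine obstacle here, as all the analytic content resides in Theorem \ref{thm:InvarInstE2k}. The only point requiring care is verifying that the reduction to the abelian case is legitimate—namely confirming $2k \notin \{0,3\}$ so that Proposition \ref{prop:SchurSO3}(a) governs the connection form—after which the corollary is a purely algebraic intersection of three explicit families together with the routine boundedness check of Proposition \ref{prop:ExtendConnection}.
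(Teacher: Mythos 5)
Your proposal is correct and follows essentially the same route as the paper, which deduces the corollary from Theorem \ref{thm:InvarInstE2k} precisely by observing that for $n = 2k$ with $k \neq 0$ (so $n \notin \{0,3\}$) Proposition \ref{prop:SchurSO3}(a) makes every invariant connection $\mathrm{span}(T_1)$-valued, reducing everything to the $S^1$ computations of $\S$\ref{sub:InvarInstS1}. Your intersection of the three $\SU(4)$-families, the curvature formula, and the extension check all match the paper's (implicit) argument.
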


\subsubsection{Invariant instantons on $E_0$}

\indent \indent We now turn our attention to instantons on $E_0 \to X^*$.  In view of the complexity of the $L$-$\Spin(7)$-instanton equations in this setting, we content ourselves with a classification of the $L$-$\SU(4)$-instantons.  Going forward, we let $(\cdot, \cdot)$ denote the multiple of the Killing form on $\mathfrak{so}(3)$ with respect to which $\{T_1, T_2, T_3\}$ is orthonormal.

\begin{thm}[Invariant $\SU(4)$-instantons on $E_0$] \label{thm:InstantonE0} Let $A$ be an $\SU(3)$-invariant connection on $E_0 \to X^*$ in temporal gauge, so that
\begin{align*}
A & = \left( p_0(r) \kappa + p_1(r) \nu_1 + p_2(r)\nu_2 \right) \otimes T_1 + \left( q_0(r) \kappa + q_1(r) \nu_1 + q_2(r)\nu_2 \right) \otimes T_2 \\
& \ \ \ \ \ \  + \left( s_0(r) \kappa + s_1(r) \nu_1 + s_2(r)\nu_2 \right) \otimes T_3,
\end{align*}
for some functions $p_j, q_j, s_j \colon (1,\infty) \to \R$ with $j = 0,1,2$.  Then:
\begin{enumerate}[(a)]
\item $A$ is an $I$-$\SU(4)$-instanton if and only if
\begin{align*}
p_0(r) & = \frac{C_1}{r^2(1-r^4)}, & q_0(r) & = \frac{C_2}{r^2(1-r^4)}, & s_0(r) & = \frac{C_3}{r^2(1-r^4)},
\end{align*}
and $p_1 = p_2 = q_1 = q_2 = s_1 = s_2 = 0$, for some constants $C_1, C_2, C_3 \in \R$.
\item $A$ is a $J$-$\SU(4)$-instanton if and only if
\begin{align*}
p_2(r) & = \frac{C_1}{(r^4 - 1)^{3/2}}, & q_2(r) & = \frac{C_2}{(r^4 - 1)^{3/2}} & s_2(r), & = \frac{C_3}{(r^4 - 1)^{3/2}},
\end{align*}
and $p_0 = p_1 = q_0 = q_1 = s_0 = s_1 = 0$, for some constants $C_1, C_2, C_3 \in \R$.
\item $A$ is a $K$-$\SU(4)$-instanton if and only if
\begin{align*}
p_1(r) & = \frac{C_1}{(r^4 - 1)^{3/2}}, & q_1(r) & = \frac{C_2}{(r^4 - 1)^{3/2}}, & s_1(r) & = \frac{C_3}{(r^4 - 1)^{3/2}},
\end{align*}
and $p_0 = p_2 = q_0 = q_2 = s_0 = s_2 = 0$, for some constants $C_1, C_2, C_3 \in \R$.
\end{enumerate}
\end{thm}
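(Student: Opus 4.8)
The plan is to exploit the fact that the $L$-$\SU(4)$ (pHYM) condition $\ast F_A + \frac{1}{2}\omega_L^2 \wedge F_A = 0$ is $\R$-\emph{linear} in the curvature. Writing $F_A = F^1 \otimes T_1 + F^2 \otimes T_2 + F^3 \otimes T_3$ for ordinary $2$-forms $F^a$, the equation decouples into the three independent conditions that each $F^a$ be a primitive $(1,1)$-form with respect to $\omega_L$. The first step is thus to compute the $F^a$ from the general invariant connection of Proposition \ref{prop:SchurSO3}(2) using \eqref{eq:CurvatureFA} and the brackets $[T_1,T_2]=T_3$, $[T_2,T_3]=T_1$, $[T_3,T_1]=T_2$. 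Denoting the $T_a$-component $1$-form by $\theta^a$, one finds $F^a = \bigl(dt \wedge \partial_t\theta^a + d\theta^a\bigr) + \theta^{a+1}\wedge\theta^{a+2}$ (indices mod $3$). The linear part is precisely \eqref{eq:FAS1} with $n=0$ applied to the color-$a$ functions; grouping the coefficients of $\kappa,\nu_1,\nu_2$ into vectors $\mathbf{p}_0 = (p_0,q_0,s_0)$, $\mathbf{p}_1 = (p_1,q_1,s_1)$, $\mathbf{p}_2 = (p_2,q_2,s_2) \in \R^3$, the quadratic part is the cross-product term $(\mathbf{p}_0\times\mathbf{p}_1)_a\,\kappa\wedge\nu_1 + (\mathbf{p}_0\times\mathbf{p}_2)_a\,\kappa\wedge\nu_2 + (\mathbf{p}_1\times\mathbf{p}_2)_a\,\nu_1\wedge\nu_2$.

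The crucial observation is that, since $\kappa,\nu_1,\nu_2$ correspond to $e^7,e^5,e^6$, \emph{every} quadratic term lands in $\mathrm{span}(e^{56},e^{57},e^{67})$. The second step is therefore to decompose $\Lambda^2$ into its $(1,1)$ and $(2,0)+(0,2)$ parts with respect to each $L$ and to locate the slot-pairs receiving \emph{no} bracket contribution. For $L=I$, where $\omega_I = -e^{07}+e^{56}+e^{12}-e^{34}$, one computes $I^*e^{13} = -e^{24}$ and $I^*e^{23} = e^{14}$; these $2$-planes lie in the indices $\{1,2,3,4\}$ and are hence disjoint from the bracket support. The $(2,0)+(0,2)$-components of $F^a$ in these planes are $\tfrac{p_1^a}{ab}(e^{13}+e^{24})$ and $\tfrac{p_2^a}{ab}(e^{23}-e^{14})$ with no quadratic correction, so requiring $F^a$ to be of type $(1,1)$ immediately forces $p_1^a = p_2^a = 0$ for every color $a$. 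The analogous bracket-free planes $e^{12}\leftrightarrow e^{34}$, $e^{13}\leftrightarrow -e^{24}$ force $p_0^a = p_1^a = 0$ in the $J$ case, and $e^{12}\leftrightarrow e^{34}$, $e^{23}\leftrightarrow e^{14}$ force $p_0^a = p_2^a = 0$ in the $K$ case.

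Once two of the three coefficient-vectors vanish, all the cross products vanish, so the bracket term disappears entirely and each color reduces to an \emph{abelian} problem identical to the $S^1$ computation of Theorems \ref{thm:Spin7Class}--\ref{thm:SU4Class} with $n=0$. The surviving linear terms are automatically of type $(1,1)$, so the only remaining equation is primitivity $\Lambda_{\omega_L} F^a = 0$, a single first-order linear ODE per color. For $L=I$ this is $p_0' = -\tfrac{2ab}{c}\bigl(\tfrac{1}{a^2}+\tfrac{1}{b^2}+\tfrac{2}{c^2}\bigr)p_0$, whose solution after the change of variables \eqref{eq:dtdr} is $p_0^a = C_a/(r^2(1-r^4))$; for $L=J,K$ the surviving ODE is $\tfrac{dp}{dr} = -\tfrac{6r^3}{r^4-1}\,p$, giving the factor $(r^4-1)^{-3/2}$. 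This reproduces exactly the three stated families. The converse is then a one-line check: on each stated solution every $\theta^a$ is a multiple of a single common $1$-form, so the brackets vanish, the surviving terms are primitive $(1,1)$ by construction, and $F_A$ satisfies the pHYM equation.

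I expect the main obstacle to be purely the bookkeeping: assembling all three $F^a$ with the correct bracket terms, and carefully carrying out the $(1,1)$/$(2,0)+(0,2)$ decomposition for each of $\omega_I,\omega_J,\omega_K$ so as to confirm which slot-pairs are bracket-free. Conceptually the argument is clean once one notices that the quadratic coupling is trapped in the three-dimensional subspace $\mathrm{span}(e^{56},e^{57},e^{67})$, which is transverse to the slot-pairs detecting the ``unwanted'' coefficients; this transversality is precisely what makes the non-abelian problem collapse into three decoupled copies of the abelian one.
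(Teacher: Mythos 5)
Your proposal is correct and follows essentially the same route as the paper's proof: in both cases one first extracts the components of the $L$-$\SU(4)$ equation supported in the $\{e^1,e^2,e^3,e^4\}$-planes (which receive no contribution from the $[A\wedge A]$ term) to force six of the nine coefficient functions to vanish, and then solves the three remaining decoupled first-order linear ODEs, arriving at the same families. Your explicit observation that the quadratic bracket terms are confined to $\mathrm{span}(e^{56},e^{57},e^{67})$ is a clean way of organizing the computation that the paper carries out directly via the wedge products $(H_L,T_j)\wedge e^{ij}$, but it is a repackaging rather than a different argument.
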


\noindent To be more explicit, the $I$-, $J$-, and $K$-$\SU(4)$-instantons of Theorem \ref{thm:InstantonE0} are, respectively,
\begin{align*}
A_I & = \frac{1}{r^2(1-r^4)} \kappa \otimes T_0, & A_J & = \frac{1}{(r^4-1)^{3/2}} \nu_2 \otimes T_0, & A_K & = \frac{1}{(r^4-1)^{3/2}} \nu_1 \otimes T_0,
\end{align*}
where here $T_0 = C_1T_1 + C_2 T_2 + C_3 T_3 \in \mathfrak{so}(3)$ is arbitrary.  We now give the proof.

\begin{proof} Let $a(r)$, $b(r)$, $c(r)$, $f(r)$ be the functions defined in (\ref{eq:Metric2}), and note that $2ab = cf$.  We use the notation $p_j'(t) = \frac{dp_j}{dt}$ and similarly for $q_j'(t)$ and $s_j'(t)$.  For future use, we observe that
\begin{equation} \label{eq:dpdt2}
\frac{dp_j}{dr} = \frac{dt}{dr}  \frac{dp_j}{dt}  =  \frac{r^2}{ \sqrt{r^4-1} }\, p_j'(t).
\end{equation}
For $L \in \{I,J,K\}$, let us set
$$H_L := \ast F_A + \frac{1}{2}\omega_L^2 \wedge F_A.$$
\begin{enumerate}[(a)]
\item We seek to solve the equation $H_I = 0$.  To that end, we calculate that
\begin{align*}
(H_I, T_1) \wedge e^{14} & = \frac{2}{ab}p_1\,\vol_X, & (H_I, T_2) \wedge e^{14} & = \frac{2}{ab}q_1\,\vol_X, & (H_I, T_3) \wedge e^{24} & = \frac{2}{ab}s_1\,\vol_X \\
(H_I, T_1) \wedge e^{24} & = -\frac{2}{ab}p_2\,\vol_X, & (H_I, T_2) \wedge e^{24} & = \frac{2}{ab}q_2\,\vol_X, & (H_I, T_3) \wedge e^{14} & = -\frac{2}{ab}s_2\,\vol_X.
\end{align*}
Thus, if $A$ is an $I$-$\SU(4)$-instanton, then $p_1 = p_2 = q_1 = q_2 = s_1 = s_2 = 0$.  Imposing these necessary conditions, the equation $H_I = 0$ reduces to the following three equations:
\begin{align*}
p_0' & = -\left( \frac{2a}{bc} + \frac{2b}{ac} + \frac{4ab}{c^3} \right)p_0, & q_0' & = -\left( \frac{2a}{bc} + \frac{2b}{ac} + \frac{4ab}{c^3} \right)q_0, & s_0' & = -\left( \frac{2a}{bc} + \frac{2b}{ac} + \frac{4ab}{c^3} \right)s_0.
\end{align*}
Using (\ref{eq:dpdt2}) and substituting the expressions (\ref{eq:Metric2}) for $a(r)$, $b(r)$, $c(r)$, we obtain the following ODE system:
\begin{align*}
\frac{dp_0}{dr} & = -\frac{2(3r^4 - 1)}{r(r^4 - 1)}p_0, & \frac{dq_0}{dr} & = -\frac{2(3r^4 - 1)}{r(r^4 - 1)}q_0, & \frac{ds_0}{dr} & = -\frac{2(3r^4 - 1)}{r(r^4 - 1)}s_0.
\end{align*}
Solving each ODE separately yields the result.
\item We seek to solve the equation $H_J = 0$.  To that end, we calculate that
\begin{align*}
(H_J, T_1) \wedge e^{13} & = \frac{2}{ab}p_1\,\vol_X, & (H_J, T_1) \wedge e^{34} & = \frac{a^2+b^2}{ab}p_0\,\vol_X \\
 (H_J, T_2) \wedge e^{13} & = \frac{2}{ab}q_1\,\vol_X, & (H_J, T_2) \wedge e^{12} & = -\frac{a^2+b^2}{ab}q_0\,\vol_X \\
(H_J, T_3) \wedge e^{24} & = \frac{2}{ab}s_1\,\vol_X, & (H_J, T_3) \wedge e^{34} & = \frac{a^2+b^2}{ab}s_0\,\vol_X.
\end{align*}
Thus, if $A$ is a $J$-$\SU(4)$-instanton, then $p_0 = p_1 = q_0 = q_1 = s_0 = s_1 = 0$.  Imposing these necessary conditions, the equation $H_J = 0$ reduces to the following three equations:
\begin{align*}
p_2' & = -\frac{3c}{ab}p_2, & q_2' & = -\frac{3c}{ab}q_2, & s_2' & = -\frac{3c}{ab}s_2.
\end{align*}
Using (\ref{eq:dpdt2}) and substituting the expressions (\ref{eq:Metric2}) for $a(r)$, $b(r)$, $c(r)$, we obtain the following ODE system:
\begin{align*}
\frac{dp_2}{dr} & = -\frac{6r^3}{r^4 - 1}p_2, & \frac{dq_2}{dr} & = -\frac{6r^3}{r^4 - 1}q_2, & \frac{ds_2}{dr} & = -\frac{6r^3}{r^4 - 1}s_2.
\end{align*}
Solving each ODE separately yields the result.
\item We seek to solve $H_K = 0$.  The calculation is completely analogous to that of part (b).  The relevant ODE system is
\begin{align*}
\frac{dp_1}{dr} & = -\frac{6r^3}{r^4 - 1}p_1, & \frac{dq_1}{dr} & = -\frac{6r^3}{r^4 - 1}q_1, & \frac{ds_1}{dr} & = -\frac{6r^3}{r^4 - 1}s_1,
\end{align*}
whose solutions give the result.
\end{enumerate}
\end{proof}

\begin{cor}[Invariant $\Sp(2)$-instantons on $E_0$] The $\SO(3)$-bundle $E_0 \to X^*$ admits a unique $\SU(3)$-invariant $\Sp(2)$-instanton up to gauge transformation, namely the (flat) canonical connection.
\end{cor}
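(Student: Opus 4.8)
The plan is to intersect the three $\SU(4)$-instanton families already classified in Theorem \ref{thm:InstantonE0}, using the fact that $\Sp(2)$-instantons are precisely the connections lying in all of them. First, I would recall from (\ref{eq:Sp2SU4}) that a connection $A$ is an $\Sp(2)$-instanton if and only if it is simultaneously an $I$-$\SU(4)$-, $J$-$\SU(4)$-, and $K$-$\SU(4)$-instanton; in fact, by Corollary \ref{cor:Sp2Spin7relation}, it already suffices for $A$ to be simultaneously a $J$-$\SU(4)$- and a $K$-$\SU(4)$-instanton. I would therefore impose these conditions on the general invariant connection of Theorem \ref{thm:InstantonE0}, written in temporal gauge with coefficient functions $p_j, q_j, s_j \colon (1,\infty) \to \R$ for $j = 0,1,2$.

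Next I would simply read off the supports of the two families. By Theorem \ref{thm:InstantonE0}(b), the $J$-$\SU(4)$ condition forces $p_0 = p_1 = q_0 = q_1 = s_0 = s_1 = 0$, leaving only the $\nu_2$-coefficients $p_2, q_2, s_2$ potentially nonzero. By Theorem \ref{thm:InstantonE0}(c), the $K$-$\SU(4)$ condition forces $p_0 = p_2 = q_0 = q_2 = s_0 = s_2 = 0$, leaving only the $\nu_1$-coefficients $p_1, q_1, s_1$. Since these two families are supported in transverse coordinate directions, their intersection forces all nine coefficient functions to vanish identically.

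Consequently, the only $\SU(3)$-invariant $\Sp(2)$-instanton in temporal gauge is the connection with $p_j = q_j = s_j = 0$ for all $j$, namely the canonical connection on $E_0$ (corresponding to the zero Wang map $\Lambda = 0$). Because $n = 0$, the associated homomorphism $\lambda_0$ is trivial, whence $d\lambda_0|_e = 0$ and the curvature of the canonical connection vanishes identically; that is, it is flat. Since every invariant connection can be placed in temporal gauge, the vanishing of all coefficient functions pins down this connection uniquely up to gauge.

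I do not expect any serious obstacle here: the entire content is the observation that the $J$- and $K$-$\SU(4)$ families of Theorem \ref{thm:InstantonE0} share only the trivial solution, so the result is an immediate consequence of that theorem together with the characterization of $\Sp(2)$-instantons in Corollary \ref{cor:Sp2Spin7relation}. The only point requiring a moment's care is confirming that $n = 0$ makes the canonical connection genuinely \emph{flat} (rather than merely Yang--Mills), which follows from the triviality of $\lambda_0$.
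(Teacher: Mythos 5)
Your proposal is correct and matches the paper's (implicit) argument: the corollary is stated without proof precisely because it follows immediately from intersecting the $J$- and $K$-$\SU(4)$ families of Theorem \ref{thm:InstantonE0}, which are supported on disjoint coefficient functions and hence meet only at the zero solution. The flatness observation for $n=0$ (trivial $\lambda_0$, hence vanishing curvature of the canonical connection) is also the intended justification.
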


\indent Finally, we address the extendibility of invariant $L$-$\SU(4)$-instantons on $E_0$ across the zero section.

\begin{thm}[Extendibility of invariant $\SU(4)$-instantons] \label{thm:NoExtensionSO3} Let $A$ be an $\SU(3)$-invariant $L$-$\SU(4)$-instanton on $E_0 \to X^*$ in temporal gauge, where $L \in \{I,J,K\}$.  Then $A$ extends across the zero section up to gauge if and only if $A$ is the (flat) canonical connection.
\end{thm}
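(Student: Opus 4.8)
The plan is to combine the explicit classification of Theorem \ref{thm:InstantonE0} with the removable singularity criterion of Proposition \ref{prop:ExtendConnection}. First I would observe that every $L$-$\SU(4)$-instanton is in particular an $L$-$\Spin(7)$-instanton by \eqref{eq:SU4Spin7}, so Proposition \ref{prop:ExtendConnection} applies verbatim: $A$ extends smoothly across the zero section up to gauge if and only if its curvature $F_A$ remains pointwise bounded. Since $|F_A|$ is gauge-invariant, this collapses the entire question to checking the boundedness of a single scalar function of $r$ as $r \to 1^+$.

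Next I would invoke the explicit forms recorded after Theorem \ref{thm:InstantonE0}. In each case the instanton is $A = \phi_L(r)\,\beta_L \otimes T_0$, where $T_0 = C_1 T_1 + C_2 T_2 + C_3 T_3 \in \mathfrak{so}(3)$, the $1$-form is $\beta_I = \kappa$, $\beta_J = \nu_2$, $\beta_K = \nu_1$, and the radial profile is $\phi_I(r) = \frac{1}{r^2(1-r^4)}$ and $\phi_J(r) = \phi_K(r) = (r^4-1)^{-3/2}$. The crucial structural point is that $A$ takes values in the one-dimensional \emph{abelian} subalgebra $\mathrm{span}(T_0) \subset \mathfrak{so}(3)$; hence $[A \wedge A] = \phi_L^2\,(\beta_L \wedge \beta_L)\otimes[T_0,T_0] = 0$, and the curvature collapses to $F_A = dA = \bigl(\phi_L'\,dr \wedge \beta_L + \phi_L\,d\beta_L\bigr)\otimes T_0$, with $d\kappa, d\nu_1, d\nu_2$ read off from the structure equations of \S\ref{sub:MCForm}.

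I would then compute $|F_A|^2$ in the orthonormal coframe \eqref{eq:Coframe}, where it factors as $|F_A|^2 = (T_0, T_0)\,g_L(r) = (C_1^2 + C_2^2 + C_3^2)\,g_L(r)$ for a scalar function $g_L$ determined by $\phi_L$ and the structure equations. One checks directly that each $g_L(r)$ blows up as $r \to 1^+$: in the $I$-case the term $\phi_I\,\sigma_1\wedge\sigma_2 = \phi_I\,a^{-2}e^{12}$ has coefficient $\phi_I\,a^{-2} \sim (r-1)^{-2}$, since $a^2 = \tfrac{1}{2}(r^2-1)\to 0$ by \eqref{eq:Metric2}, while in the $J$- and $K$-cases the profiles $(r^4-1)^{-3/2}$ against $d\nu_2,\,d\nu_1$ (whose coframe coefficients carry a factor $(ab)^{-1} \sim (r^4-1)^{-1/2}$) diverge at least as fast. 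Hence $|F_A|$ is bounded near the zero section if and only if $C_1 = C_2 = C_3 = 0$, i.e.\ $T_0 = 0$, which is exactly the (flat) canonical connection $A = 0$. Conversely the canonical connection is flat, so it trivially extends (the bundle $E_0$ admits extensions by Proposition \ref{prop:SO3Extension}(c)); this establishes both implications.

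The only point demanding care is that the \emph{nonabelian} gauge group $\SO(3)$ might seem to offer an escape: one could worry that a gauge transformation rotates away the radial singularity even though $A$ lies in a fixed abelian subalgebra. This worry is dissolved by the removable singularity theorem itself, which characterizes gauge-extendability purely through the gauge-invariant quantity $|F_A|$; the scalar blow-up computation is therefore decisive, and no analysis of the $\SO(3)$-gauge orbit is required. Consequently the argument runs entirely parallel to the $S^1$ computation of Theorem \ref{thm:ExtendInstS1}, with the nonabelian structure contributing only the benign overall factor $(T_0,T_0)$.
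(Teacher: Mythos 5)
Your proposal is correct and follows essentially the same route as the paper: invoke Proposition \ref{prop:ExtendConnection} (noting that $\SU(4)$-instantons are $\Spin(7)$-instantons so it applies), then check from the explicit solutions of Theorem \ref{thm:InstantonE0} that $|F_A|^2$ is a multiple of $C_1^2+C_2^2+C_3^2$ times a function blowing up like $(r-1)^{-4}$ as $r\to 1^+$. Your asymptotic estimate of a single divergent coframe component (exploiting $F_A = dA$ since $T_0$ spans an abelian subalgebra) is a mild shortcut compared to the paper's closed-form expressions for $|(F_A,T_j)|^2$, but the argument is the same.
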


\begin{proof} If $A$ is an $I$-$\SU(4)$-instanton, then a calculation gives
$$\left| (F_A, T_j) \right|^2 = \frac{8(6r^8 - 3r^4 + 1)}{r^8 (r^2 + 1)^4 (r+1)^4} \cdot \frac{C_j^2}{(r-1)^4},$$
for $j = 1, 2, 3$.  Each such function is bounded near the zero section if and only if $C_j = 0$.  Thus, by Proposition \ref{prop:ExtendConnection}, the only $I$-$\SU(4)$-instanton that extends across the zero section up to gauge is the one with $C_1 = C_2 = C_3 = 0$, which is the canonical connection.  Similarly, if $A$ is a $J$- or $K$-$\SU(4)$-instanton, then its curvature satisfies
$$\left| (F_A, T_j) \right|^2 = \frac{48C_j^2}{(r^4 - 1)^4},$$
for $j = 1,2,3$.  Again by Proposition \ref{prop:ExtendConnection}, the only $J$- or $K$-$\SU(4)$-instanton that extends across the zero section up to gauge is the one with $C_1 = C_2 = C_3 = 0$.
\end{proof}

\bibliographystyle{plain}
\bibliography{Instanton-Ref}

\Addresses

\end{document}